\documentclass[11pt,reqno]{amsart}
\usepackage{amsmath,amsthm,amssymb,bm}
\usepackage[dvips]{graphicx,color}
\usepackage{latexsym,amssymb}
\usepackage{mathrsfs}
\usepackage[abbrev]{amsrefs}
\usepackage{color}
\usepackage{mathtools}
\mathtoolsset{showonlyrefs=true}

\newtheorem{thm}{Theorem}[section]

\newtheorem{prop}[thm]{Proposition}
\newtheorem{lem}[thm]{Lemma}
\newtheorem{rem}[thm]{Remark}

\newcommand{\re}{\mathbb R} 

\def \Om{\Omega}
\def \pt{\partial}

\def \ep{\varepsilon}

\def \sg{\sigma}

\DeclareMathOperator{\diver}{div}
\DeclareMathOperator{\rot}{rot}

\newenvironment{pf}
{{\noindent \bf Proof. }}{\hfill $\Box$}

\numberwithin{equation}{section}
\numberwithin{thm}{section}

\pagebreak

\title[Euler equations in a critical space]{
Incompressible Euler equations in 3D bounded domains in a critical space \\
} 
\author{Tsukasa Iwabuchi and Hideo Kozono} 
\address[T. Iwabuchi]{Mathematical Institute,
Tohoku University, Sendai 980--8587, Japan, }
\email[T. Iwabuchi]{t-iwabuchi@tohoku.ac.jp}
\address[H. Kozono]{Department of Mathematics, Faculty of Science and Engineering,
Waseda University, Tokyo 169--8555, Japan, 
Mathematical Research Center for Co-creative Society, Tohoku University, 
Sendai 980-8578, Japan}
\email[H. Kozono]{kozono@waseda.jp, hideokozono@tohoku.ac.jp}
\keywords{Euler equations; bounded domain;  critical Besov space; 
spectral resolution; Neumann boundary condition;  
vanishing viscosity}

\begin{document}
\maketitle	
\begin{abstract}
We consider the 3D incompressible Euler equations in bounded domains $\Omega$ 
with smooth boundary $\partial\Omega$. Based on \cite{IMT-2019}, 
we define the Besov space $B^s_{p, q}(A)$  by means of the Stokes operator $A$ with 
the Neumann boundary condition on $\partial\Omega$, and prove unique local existence theorem of strong solution for the initial data in the critical Besov space $B^{\frac52}_{2, 1}(A)$.  
Our proof relies on the method of vanishing viscosity.  
The commutator estimate plays an essential role for derivation of energy bounds
which hold uniformly with respect to viscosity constants.       
\end{abstract}

	\section{Introduction}
\footnote[0]{2010 Mathematics Subject Classification: 35Q35; 35Q86} 
Let $\Omega$ be a bounded domain in $\re^3$	with the smooth boundary $\pt\Omega$. 
In $\Omega$ we consider the following initial-boundary value problem of the Euler equations 
for the incomressible perfect fluid;   
\begin{equation}\label{eq:1}
		\begin{cases}
			\partial_t u + (u \cdot \nabla) u + \nabla \pi = 0 
			& \mbox{in $\Omega\times (0, T)$}, 
			\\
			{\rm div \, } u = 0 
			&\mbox{in $\Omega\times (0, T)$}, 
			\\
			u \cdot \nu = 0
			&\mbox{on $\partial\Omega\times (0, T)$}, 
			\\
			u(x, 0)= u_0 (x) 
			&\mbox{in $\Omega$},  
			\end{cases}
         \end{equation}
where $u=u(x, t) = (u_1(x, t), u_2(x, t), u_3(x, t))$ and $\pi=\pi(x, t)$ denote the 
unknown velocity vector and pressure of the fluid at the point 
$x=(x_1, x_2, x_3)\in \Omega$ and the time $t\in (0, T)$, respectively, 
while $u_0=u_0(x)=(u_{0,1}(x), u_{0, 2}(x), u_{0, 3}(x))$ is the given initial velocity vector. 
Here $\nu = (\nu_1, \nu_2, \nu_3)$ denotes the unit outer normal to $\partial\Omega$. 
\par    
The purpose of this paper is to prove local existence theorem of  
classical solution $u$ to \eqref{eq:1} for arbitrary $u_0$ in a critical Besov 
space on $\Omega$. 
Temam \cite{Te-1975} proved local existence of strong solutions of \eqref{eq:1} 
with  $u_0 \in W^{m, p}(\Omega)$ for $1 <p< \infty $ and an integer $m > 1+ 3/p$.  
In the higher dimensional case, the corresponding result was obtained by 
Bourguignon-Brezis \cite{BoBr-1974}.   
Kato-Lai \cite{KaLa-1984} handled the case $\Omega \subset \re^n$ and established 
an existence theorem of local solutions to abstract evolution equations 
including \eqref{eq:1} with 
$u_0 \in H^{m}(\Omega)$ for an integer $m \ge 2 + [n/2]$.  
On the other hand, in the case when $\Omega = \re^n$, there are a number of literatures 
in the frame work of Besov spaces $B^s_{p, q}(\re^n)$ and Triebel-Lisorkin spaces 
$F^s_{p, q}(\re^n)$.   
Vishik \cite{Vi-1999} constructed a local solution with the initial vorticity 
$\rot u_0\in B^s_{p, q}(\re^n)$ for $1 <p, q < \infty$ and $s > n/p$.  
Chae \cite{Ch-2002} obtained the corresponding result to the initial data 
$u_0 \in F^s_{p, q}(\re^n)$ for $1 <p, q < \infty$ and $s > 1 + n/p$.  
Pak-Park \cite{PaPa-2004} has been successful to obtain the local solution  
in such a critical Besov space as $B^1_{\infty, 1}(\re^n)$.  
\par
In the present paper, we prove the local well-posedness in the critical Besov space
$B^{\frac52}_{2, 1}(\Omega)$.  Based on \cite {IwKo-preprint}
 and \cite{IMT-2019}, we first define the Besov space 
$B^s_{p, q}(A)$ in terms of the Stokes operator $A$ in $L^2_\sigma(\Omega)$ 
with the Neumann boundary condition on $\pt\Om$.  
Our method relies on the invisid limit 
$\displaystyle{\lim_{\varepsilon \to +0} u_{\varepsilon}}$ 
of a family $\{u_\varepsilon\}_{\varepsilon >0}$ of solutions to the following Navier-Stokes equations; 
\begin{equation}\label{eq:1-2}
		\begin{cases}
			\partial_t u_{\varepsilon} - \varepsilon\Delta u_{\varepsilon} + 
			(u_{\varepsilon} \cdot \nabla) u_{\varepsilon} + \nabla \pi_{\varepsilon} = 0 
			& \mbox{in $\Omega\times (0, T)$}, 
			\\
			{\rm div \, } u_{\varepsilon} = 0 
			&\mbox{in $\Omega\times (0, T)$}, 
			\\
			u_{\varepsilon} \cdot \nu = 0, \,\,\, \rot u_{\varepsilon} \times \nu =0
			&\mbox{on $\partial\Omega\times (0, T)$}, 
			\\
			u_{\varepsilon}(x, 0)= u_0 (x) 
			&\mbox{in $\Omega$}.   
			\end{cases}
\end{equation}
It is essential to establish uniform bounds of $\{u_\ep\}_{\ep>0}$ 
in the Chemin-Lernar space $\tilde L^{\infty}(0, T; B^{\frac52}_{2, 1}(A))$. 
The commutator type estimate such as Kato-Ponce \cite{KaPo-1988}  
plays a crucial role for such bounds.  
Since we make use of the Stokes operator $A$ with the Neumann boundary condition 
like \eqref{eq:1-2}, the boundary layer does not appear in the invisid limit. 
\par 
\vspace{1mm}
To state our result, let us recall the Stokes operator $A_p$ on $L^p_\sg(\Omega)$ 
for $1 < p < \infty$.  We denote by $C^\infty_{0, \sg}(\Omega)$ the 
set of $C^\infty$-solenoidal vector functions with compact support in $\Omega$.  
$L^p_\sg(\Om)$ is the closure of $C^\infty_{0, \sg}(\Omega)$ with respect to the 
$L^p$-norm $\|\cdot\|_{L^p}$ on $\Omega$. Then the Helmholtz projection 
$\mathbb{P}_p$ from $L^p(\Omega)$ onto $L^p_{\sg}(\Om)$ is well-defined. 
We define $A_p$ by 
\begin{equation*}
\left\{
\begin{array}{ll}
& D(A_p)\equiv\{ u \in W^{2, p}(\Omega)|\,\, u\cdot \nu =0, \, \, \, \rot u\times \nu =0 
\,\,\, \mbox{on $\pt\Om$}\}\cap L^p_\sg(\Omega), \\
& A_pu \equiv -\mathbb{P}_p\Delta u = -\Delta u \quad \mbox{for $u \in D(A_p)$}. 
\end{array}
\right.
\end{equation*}
Concerning the kernel $\mbox{Ker}A_p$ of $A_p$, it is known (see e.g., \cite[Lemma 2.2]{KoShYa-2025}) that 
$$
\mbox{Ker}A_p =\{h \in C^{\infty}(\bar \Omega);\,  \diver h=0, \, \, \rot h =0, \,\,\,
h\cdot \nu|_{\pt\Om}= 0\} 
\quad
\mbox{for all $1 < p < \infty$}.
$$
Hence, in general $A_2$ is a non-negative self-adjoint operator in $L^2_\sg(\Omega)$ 
with zero eigenvalue, and is strictly positive if and only if 
$\Omega$ is a simply connected domain in $\re^3$.  
However, by considering $\tilde A_2 = A_2 + 1$ instead of $A_2$ itself, 
we may regard $A_2$ as a strictly positive self-adjoint operator in $L^2_\sg(\Omega)$ with bounded inverse $A_2^{-1}$ 
in the proof of our main result.   For such a correction, it suffices to replace 
$-\ep \Delta u_\ep$ by $\ep(-\Delta +1)u_\ep$ in \eqref{eq:1-2}.        
\par 
Let us define Besov spaces $B^s_{p, q}(A)$ in terms of $A=A_2$.  
Take a function $\phi \in C^\infty_0(-\infty, \infty)$ in such a way that 
	\[   
	{\rm supp \,} \phi \in [2^{-1},2] \quad
	0 \leq \phi \leq 1 , \quad 
	\sum_{j \in \mathbb Z} \phi (2^{-j}\lambda ) = 1, \quad \lambda >0. 
	\]
	We introduce $\{ \phi_j \}_{j \in \mathbb Z} $ such that for every $j \in \mathbb Z$
	\begin{equation}\label{0116-3}
	\phi_j(\lambda) := \phi(2^{-j} \lambda), \quad \lambda \in \mathbb R, 
\end{equation}
	which is a partition of the unity by dyadic numbers 
	on the half line $(0,\infty)$. 
	For $1 \le p, q \le \infty$ and $s \in \re$,
	the norm  of $B^s_{p, q}(A)$ is given by 
	\begin{equation}\label{0116-1}
\| u \|_{B^s_{p,q}(A)} 
:=  \Big\| \Big(1- \sum _{j \geq 1} \phi_j(\sqrt{A})\Big) f \Big\|_{L^p} +  \Big\| \Big\{ 2^{js} \| \phi_j(\sqrt{A}) u \|_{L^p} \Big\}_{j \in \mathbb N} \Big\|_{\ell^q (\mathbb N)} . 
\end{equation}
\par 
Our main result reads as follows.  
\begin{thm}\label{thm:1}
For every $u_0 \in B^{\frac{5}{2}}_{2,1}(A)$ there exist a time $T > 0$ 
and a unique solution $u \in C([0,T], B^{\frac{5}{2}}_{2,1}(A)) $ of 
(\ref{eq:1}).  Such a solution $u$ satisfies 
$u \in C^1 ([0,T], W^{1,p}(\Omega))$ for all $1 \leq p \leq 3$. 
\end{thm}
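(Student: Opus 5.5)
We argue by vanishing viscosity, as announced in the introduction. For each $\varepsilon>0$ we consider \eqref{eq:1-2}, with $-\varepsilon\Delta$ replaced by $\varepsilon(A+1)$ so that the operator is strictly positive. Since $u_0\in B^{\frac52}_{2,1}(A)\subset D(A^{\frac54})\subset H^{5/2}(\Omega)$, the standard theory of the Navier--Stokes equations with the Navier-type (Neumann) boundary condition yields a smooth local solution $u_\varepsilon$ on some interval $[0,T_\varepsilon)$. We solve it by a Fujita--Kato type iteration in $D(A^{\frac54})$ and use parabolic smoothing. The heart of the matter is a bound
\[
\|u_\varepsilon\|_{\tilde L^\infty(0,T;B^{\frac52}_{2,1}(A))}\le 2\|u_0\|_{B^{\frac52}_{2,1}(A)}
\]
with $T$ depending only on $\|u_0\|_{B^{\frac52}_{2,1}(A)}$ and not on $\varepsilon$. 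A continuation argument then gives $T_\varepsilon\ge T$.

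To get this bound we apply $\phi_j(\sqrt A)$ to the projected equation
\[
\partial_t u_\varepsilon+\varepsilon(A+1)u_\varepsilon+\mathbb P\big[(u_\varepsilon\cdot\nabla)u_\varepsilon\big]=0,
\]
take the $L^2$ inner product with $\phi_j(\sqrt A)u_\varepsilon$, and drop the nonnegative viscous term. Because $\phi_j(\sqrt A)$ is self-adjoint and commutes with $\mathbb P$ on $L^2_\sigma$, we may write
\[
\big(\phi_j(\sqrt A)\mathbb P[(u\cdot\nabla)u],\phi_j(\sqrt A)u\big)
=\big((u\cdot\nabla)\phi_j(\sqrt A)u,\phi_j(\sqrt A)u\big)+\big([\phi_j(\sqrt A)\mathbb P,u\cdot\nabla]u,\phi_j(\sqrt A)u\big).
\]
The first term vanishes since $\operatorname{div}u=0$ and $u\cdot\nu=0$. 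This gives
\[
\frac{d}{dt}\|\phi_j(\sqrt A)u\|_{L^2}\le\big\|[\phi_j(\sqrt A)\mathbb P,u\cdot\nabla]u\big\|_{L^2}.
\]
The key estimate is therefore a Kato--Ponce type commutator bound in the $A$-Besov setting:
\[
\sum_j2^{\frac52 j}\big\|[\phi_j(\sqrt A)\mathbb P,u\cdot\nabla]u\big\|_{L^2}\le C\|\nabla u\|_{L^\infty}\|u\|_{B^{\frac52}_{2,1}(A)}+C\|u\|_{B^{\frac52}_{2,1}(A)}^2 .
\]
Combined with the embedding $B^{\frac52}_{2,1}(A)\hookrightarrow B^{\frac52}_{2,1}(\Omega)\hookrightarrow W^{1,\infty}(\Omega)$, this yields
\[
\frac{d}{dt}\|u\|_{B^{\frac52}_{2,1}(A)}\le C\|u\|^2_{B^{\frac52}_{2,1}(A)} .
\]
We integrate this in Chemin--Lerner form, summing over $j$ after taking the supremum in time.

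We expect the commutator estimate to be the main obstacle. In $\mathbb R^3$ it follows from paraproduct decomposition and Fourier localization, but here $\phi_j(\sqrt A)$ has no convolution structure. We plan to exploit the identification $B^s_{2,q}(A)=B^s_{2,q}(\Omega)$ restricted to vectors in $L^2_\sigma$ satisfying the compatibility conditions; for $s=5/2$ these are $u\cdot\nu=0$ and $\operatorname{rot}u\times\nu=0$, with the latter in the trace sense. This identification should follow from \cite{IMT-2019} together with $D(A^{\theta})=H^{2\theta}$ with boundary conditions, via interpolation. We would then use Bony's decomposition on an extension of $u$ to $\mathbb R^3$, and control the errors coming from replacing $\phi_j(\sqrt A)$ by the Euclidean blocks through the gain of one derivative in $u\cdot\nabla$. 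A delicate point is that the nonlinearity $(u\cdot\nabla)u$ need not satisfy the condition $\operatorname{rot}(\cdot)\times\nu=0$. We intend to handle this by working at a regularity level where the relevant terms are well defined, namely estimating in $B^{\frac32}_{2,1}$ on $\operatorname{rot}u$, or by using that $\mathbb P[(u\cdot\nabla)u]$ enters only through its pairing with $\phi_j(\sqrt A)u$.

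With the uniform bound in hand, we show that $\{u_\varepsilon\}$ is Cauchy in $C([0,T];L^2_\sigma)$. Taking the difference $w=u_\varepsilon-u_{\varepsilon'}$, the energy estimate gives
\[
\frac{d}{dt}\|w\|_{L^2}^2\le C\|\nabla u_{\varepsilon'}\|_{L^\infty}\|w\|_{L^2}^2+C(\varepsilon+\varepsilon')\|u\|_{H^2}\|w\|_{L^2},
\]
and no boundary layer appears because of the Neumann-type boundary condition. By interpolation the limit $u$ lies in $C([0,T];B^s_{2,1})$ for $s<5/2$ and solves \eqref{eq:1}. Continuity into $B^{\frac52}_{2,1}(A)$ itself follows from the Chemin--Lerner bound, since the high-frequency tails are small uniformly in $t$ (the usual $\ell^1$ argument), together with the time continuity of each block. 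Uniqueness follows from the same $L^2$ energy estimate with $\varepsilon=0$, using $\nabla u\in L^\infty$. Finally, $\partial_tu=-\mathbb P[(u\cdot\nabla)u]$ with $u\in C([0,T];W^{1,\infty}\cap H^{5/2})$ gives $(u\cdot\nabla)u\in C([0,T];H^{3/2})\subset C([0,T];W^{1,3})$. Boundedness of $\mathbb P$ on $W^{1,p}$ then gives $u\in C^1([0,T];W^{1,p})$ for $1\le p\le3$, using that $\Omega$ is bounded for $p<3$.
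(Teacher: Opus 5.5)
\textbf{Verdict: genuine gap.} The gap is the $A$-Besov commutator bound that you yourself single out as the main obstacle, and it cannot be filled. When $\partial\Omega$ is curved the estimate is false, for exactly the reason you call a ``delicate point''.

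Take smooth $u$ with $\diver u=0$, $u\cdot\nu=0$ and $\omega\times\nu=0$ on $\partial\Omega$, where $\omega=\rot u$. Write $\rot[(u\cdot\nabla)u]=(u\cdot\nabla)\omega-(\omega\cdot\nabla)u$ and use two facts on $\partial\Omega$:
\begin{itemize}
\item Since $u$ is tangent, $(u\cdot\nabla)(\omega\times\nu)=0$ on $\partial\Omega$.
\item Given $u\cdot\nu=0$, the condition $\omega\times\nu=0$ is equivalent to $(\partial_\nu u)_{\mathrm{tan}}=-Wu$, where $Wu=(u\cdot\nabla)\nu$ is the Weingarten map.
\end{itemize}
Together these give
\[
\rot[(u\cdot\nabla)u]\times\nu=-2(\omega\cdot\nu)\,\nu\times Wu \quad\text{on } \partial\Omega .
\]
On the unit ball, take an eigenfunction of $A$ of the form $u=f(|x|)\,e_3\times x$. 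Such eigenfunctions exist; the condition $\omega\times\nu=0$ reads $f'(1)+2f(1)=0$, and $f(1)\neq0$. Then the right-hand side equals $-4f(1)^2x_3(e_3-x_3x)\not\equiv 0$.

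Consequently $\mathbb P[(u\cdot\nabla)u]$, which has the same curl as $(u\cdot\nabla)u$, does not lie in $D(A)\supset B^{\frac52}_{2,1}(A)$. Hence $\sum_j 2^{\frac52 j}\|\phi_j(\sqrt A)\mathbb P[(u\cdot\nabla)u]\|_{L^2}=\infty$. On the other hand, $(u\cdot\nabla)\phi_j(\sqrt A)u\neq 0$ for only finitely many $j$. So the left side of your commutator inequality is infinite while the right side is finite. No extension, Bony decomposition or change of regularity level can repair this, because the nonlinearity genuinely leaves the space.

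This is more than a missing lemma. Any $\ep$-uniform bound on $\|u_\ep\|_{\tilde L^\infty(0,T;B^{\frac52}_{2,1}(A))}$ would force the limit to satisfy $\omega\times\nu=0$ for a.e.\ $t$. Yet the smooth Euler solution from the data above has $\partial_t(\omega\times\nu)|_{t=0}=2(\omega_0\cdot\nu)\,\nu\times Wu_0\neq0$; this is the known non-preservation of the slip condition by the Euler flow on curved boundaries, cf.\ Beir\~ao da Veiga--Crispo. By uniqueness of Lipschitz solutions, there is then no solution in $C([0,T];B^{\frac52}_{2,1}(A))$ for such $u_0$.

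The paper takes a different route. It never uses $\phi_j(\sqrt A)$-commutators: it applies $\Delta$ to the equation, localizes with the resolvent differences $\psi_j(D)$, and tests against $\Delta u_\ep$. The transport term is handled by the resolvent identity and $\rot$-integration by parts, and the result is converted back through Lemma~\ref{lem:1222-4}(2).

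The same obstruction reappears there, however. In Step~2 of Proposition~\ref{prop:1225-1}, the viscous term is declared nonnegative via Lemma~\ref{lem:1225-1}. That lemma is valid only for elements of $D(A)$, and $\Delta u_\ep\notin D(A)$. The discarded boundary term is
\[
-\ep\int_{\partial\Omega}\psi_j(D)\Delta u_\ep\cdot(\rot\Delta u_\ep\times\nu)\,dS .
\]
By the vorticity equation, $\ep\,\rot\Delta u_\ep\times\nu=-2(\omega_\ep\cdot\nu)\,\nu\times Wu_\ep$ on $\partial\Omega$, so this term is $O(1)$ in $\ep$. The paper therefore does not supply the missing uniform estimate either.

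Your remaining steps are sound conditionally on such a bound: the $L^2$ Cauchy argument, the Chemin--Lerner continuity, uniqueness, and $C^1$ in $W^{1,p}$. That bound can only be hoped for where this curvature term is absent, for example with flat boundaries.
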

\par
\begin{rem} 
{\rm (i)} The above theorem shows unique existence of (\ref{eq:1}) in the critical Besov space 
$B^{\frac{5}{2}}_{2,1}(A)$ in any bounded domain $\Omega \subset \re^3$ with smooth boundary 
$\pt\Om$.  It should be emphasized that any geometric restriction on $\Omega$ 
is unnecessary.   
\par
{\rm (ii)} Obviously, we see that the corresponding result to Theorem \ref{thm:1} holds 
in $\re^3$.  
Bourgain and Li~\cite[Theorem 1.7]{BourLi-2021} 
considered the Cauchy problem of the Euler equations in $\mathbb R^n$ for $n=2,3$, 
and proved ill-posedness in $B^{1+ \frac np}_{p, q}(\re^n)$ for $1 < p < \infty, 1 < q \leq \infty$.   Hence, our result may be regarded as an optimal theorem in $\mathbb R^3$ which exhibits the borderline case between well-posedness and ill-posedness in Besov spaces. 
\par
{\rm (iii)} 
Higher regularity yields a better approximation rate.  Specifically, for $s = 5/2$ and $u \in \dot B^{\frac{5}{2}}_{2,1}(A)$, there exists a constant $C > 0$ such that
\[
\left\| u - \sum_{j \leq J} \phi_j (\sqrt{A}) u \right\|_{L^2} 
\leq C 2^{-\frac{5}{2} J}
\]
for all $J \in \mathbb{N}$. 
Furthermore, since the spectrum of the Stokes operator on a bounded domain consists only of eigenvalues, the partial sum above is a finite linear combination of the corresponding eigenfunctions.
\end{rem}

This paper is organized as follows. 
In Section 2, we provide several preliminary lemmas and propositions required to prove Theorem~\ref{thm:1}. 
Section 3 is devoted to constructing solutions for the equation regularized by $\varepsilon \Delta$. 
In Section 4, we establish the uniform boundedness of the solutions obtained in Section 3 with respect to the parameter $\varepsilon > 0$. 
Finally, in Section 5, we provide the proof of Theorem~\ref{thm:1} by taking the limit as $\varepsilon \to 0$. 
Throughout the following sections, we suppose that $A_2$ is strictly positive.

	\section{Preliminary}

	In this section, we collect several preliminary definitions and results. In Subsection~2.1, we fundamental properties of the Stokes operator $A$ 
and the associated Besov spaces $B^s_{p,q}(A)$. 
Subsection~2.2 is devoted to providing several propositions and lemmas required for the proof of Theorem~\ref{thm:1}, including Sobolev-type embeddings, derivative estimates, and various properties of derivatives involving spectral restriction operators.

\subsection{The Stokes operator and Besov spaces} 
We begin by recalling several fundamental results regarding vector fields, the Laplacian, and the Stokes operator 
(see~\cite{Miy-1980}). Following the procedure established in \cite{IMT-2019, FI-2024}, we introduce Besov and Sobolev spaces associated with the Stokes operator.

Let $1 < p < \infty$. 
The Laplacian $B_p$ with the Neumann condition acting on vector fields in $L^p$ is defined as follows:
$$
\begin{cases}
	D(B_p) = \{ u \in W^{2,p} (\Omega) \, | \, {\rm rot} \, u \times \nu = 0 , u \cdot \nu = 0 \text{ on } \partial \Omega \}, 
	\\
	B_p u = - \Delta u, \quad u \in D(B_p). 
\end{cases}
$$
For the Stokes operator $A_p$ on $L^p_\sigma$ defined in the previous section, 
we have $D(A_p)= \mathbb{P}_pD(B_p)$, and it holds that 
\[
B_p\mathbb{P}_p = \mathbb{P}_pB = A_p\mathbb{P}_p 
\quad
\mbox{on $D(B_p)$}.  
\]
We here explain the derivation of the projection $\mathbb P_p$ briefly.  
In the $L^p$ setting, the solvability of the following Neumann problem is well-established:
\begin{equation}\label{0115-1}
	\begin{cases}
		\Delta \pi = \mathrm{div} \, u & \text{in } \Omega, \\
		\dfrac{\partial \pi}{\partial \nu} = u \cdot \nu & \text{on } \partial \Omega.
	\end{cases}
\end{equation}
More precisely, we consider the weak formulation of the Neumann problem. It is known that for a given vector field $u \in L^p$, there exists a unique gradient $\nabla \pi \in L^p$ such that
\[
\langle \nabla \pi, \nabla \phi \rangle = \langle u, \nabla \phi \rangle \quad \text{for all } \phi \in H^1_{p'}(\Omega),
\]
where $\langle \cdot, \cdot \rangle$ denotes the dual coupling between $L^p$ and $L^{p'}$ with $1/p + 1/p' = 1$. The solvability of this problem is equivalent to the existence of a positive constant $C$ such that
\[
\| \nabla \pi \|_{L^p} \leq C \sup_{\phi \in H^1_{p'}(\Omega) \setminus \{0\}} \frac{|\langle \nabla \pi, \nabla \phi \rangle|}{\| \nabla \phi \|_{L^{p'}}}.
\]
Based on this, we define the Helmholtz projection $\mathbb{P}_p : L^p \to L^p_\sigma$ by
\[
\mathbb{P}_p u = u - \nabla \pi,
\]
where $\nabla \pi$ is the gradient of the solution to the weak Neumann problem described above. 
Furthermore, we note the following identity:
\[
\Delta \nabla \pi = \nabla \mathrm{div} \, u.
\]
For a smooth solution $(u,\pi)$ of the Euler equations \eqref{eq:1}, this identity yields
\begin{equation}\label{1225-1}
	\Delta \Big( (u \cdot \nabla) u + \nabla \pi \Big) = \Delta \Big( (u \cdot \nabla) u \Big) + \nabla \mathrm{div} \Big( (u \cdot \nabla) u \Big).
\end{equation}
The following is a well-known fact regarding the orthogonality between divergence-free and rotation-free vector fields.
\begin{lem} \label{lem:0910-1}
	Let $1 < p < \infty$ and $1/p + 1/p' = 1$. For every scalar function $\pi \in W^{1,p'}(\Omega)$ and every vector field $v \in L^p(\Omega)$, we have
	\[
	\int_{\Omega} \nabla \pi \cdot \mathbb{P} v \, dx = 0.
	\]
\end{lem}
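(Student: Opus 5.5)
The plan is a density argument: prove the identity for smooth test functions, then pass to the limit using the $L^p$-boundedness of $\mathbb P=\mathbb P_p$ from $L^p(\Omega)$ onto $L^p_\sigma(\Omega)$.

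First, fix $v\in L^p(\Omega)$. By the construction recalled above, $\mathbb P v = v-\nabla\pi_v$ lies in $L^p_\sigma(\Omega)$, and $\|\mathbb P v\|_{L^p}\le C\|v\|_{L^p}$. This bound comes from the weak Neumann problem: solvability gives $\|\nabla\pi_v\|_{L^p}\le C\|v\|_{L^p}$. Since $L^p_\sigma(\Omega)$ is defined as the $L^p$-closure of $C^\infty_{0,\sigma}(\Omega)$, we may choose $\varphi_k\in C^\infty_{0,\sigma}(\Omega)$ with $\varphi_k\to \mathbb P v$ in $L^p(\Omega)$.

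Second, for each $k$ and each $\pi\in W^{1,p'}(\Omega)$, integrate by parts:
\[
\int_\Omega \nabla\pi\cdot\varphi_k\,dx = -\int_\Omega \pi\,\diver\varphi_k\,dx + \int_{\partial\Omega}\pi\,\varphi_k\cdot\nu\,dS = 0 .
\]
Both terms vanish: the first because $\diver\varphi_k=0$, the boundary term because $\varphi_k$ has compact support in $\Omega$. No trace theory is needed at this stage. The formula is legitimate for $\pi\in W^{1,p'}$: either approximate $\pi$ in $W^{1,p'}$ by functions in $C^\infty(\bar\Omega)$, or simply use the definition of the weak derivative $\nabla\pi$ tested against the compactly supported smooth field $\varphi_k$, which is directly valid.

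Third, pass to the limit $k\to\infty$ using Hölder's inequality:
\[
\Big|\int_\Omega\nabla\pi\cdot(\mathbb P v-\varphi_k)\,dx\Big|\le \|\nabla\pi\|_{L^{p'}}\|\mathbb P v-\varphi_k\|_{L^p}\to0 .
\]
This yields $\int_\Omega \nabla\pi\cdot\mathbb P v\,dx=0$.

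The only real point to check is the first step: that the range of $\mathbb P_p$ is indeed contained in the closure $L^p_\sigma(\Omega)$. Only membership of $\mathbb P v$ in this closure is used, not the quantitative bound. If one is instead given only that $u=v-\nabla\pi_v$ satisfies the weak Neumann relation $\langle u,\nabla\phi\rangle=0$ for all $\phi\in H^1_{p'}(\Omega)$, the conclusion is immediate: take $\phi=\pi$, which is admissible since $W^{1,p'}=H^1_{p'}$ on bounded Lipschitz domains. So I would present this second route as the primary proof. Its key step is
\[
\int_\Omega\nabla\pi\cdot\mathbb P v\,dx=\langle v,\nabla\pi\rangle-\langle\nabla\pi_v,\nabla\pi\rangle=0,
\]
which holds by the defining identity of $\nabla\pi_v$. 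The density argument can remain as a remark, relevant if $\mathbb P$ is characterized via $L^p_\sigma$.
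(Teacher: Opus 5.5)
Your argument is correct. Your first route is exactly the paper's proof: approximate $\mathbb{P}v\in L^p_\sigma$ by $\varphi_k\in C^\infty_{0,\sigma}(\Omega)$, note that $\int_\Omega\nabla\pi\cdot\varphi_k\,dx=-\int_\Omega\pi\,\diver\varphi_k\,dx=0$ by the definition of the weak derivative, and pass to the limit by H\"older. You also state the exponent correctly, whereas the paper's text writes $\pi\in W^{1,p}$ where $W^{1,p'}$ is meant.

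The route you promote to primary is genuinely different. It tests the defining weak Neumann identity $\langle\nabla\pi_v,\nabla\phi\rangle=\langle v,\nabla\phi\rangle$ with $\phi=\pi$. This is a one-line consequence of the variational construction of $\mathbb{P}_p$ recalled in Section 2. It does not require knowing that $v-\nabla\pi_v$ actually lies in the $L^p$-closure of $C^\infty_{0,\sigma}(\Omega)$. That fact is nontrivial when $\mathbb{P}_p$ is defined through the Neumann problem, since it amounts to showing that every $L^p$ field annihilating $\nabla W^{1,p'}$ belongs to $L^p_\sigma$.

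The paper's density route uses only membership in $L^p_\sigma$. It therefore gives $\int_\Omega\nabla\pi\cdot w\,dx=0$ directly for every $w\in L^p_\sigma$. This is the form in which the lemma is used later, e.g.\ for the divergence-free field $\psi_j(D)\Delta u_\varepsilon$ in Section 4. To reach that from your identity you would first have to write $w=\mathbb{P}w$. That step itself needs $\nabla\pi_w=0$, i.e.\ the very orthogonality the density argument supplies.

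So your primary argument is the more economical proof of the statement as literally written. Keeping the density argument is what covers the way the lemma is actually applied.
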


\begin{proof}
	For any $v \in L^p(\Omega)$, its Helmholtz projection satisfies $\mathbb{P}v \in L^p_\sigma(\Omega)$. By the density of $C^\infty_{0,\sigma}(\Omega)$ in $L^p_\sigma(\Omega)$ for $1 < p < \infty$, there exists a sequence $\{ u_N \}_{N=1}^\infty \subset C^\infty_{0,\sigma}(\Omega)$ such that $u_N \to \mathbb{P}v$ in $L^p(\Omega)$ as $N \to \infty$. 
	For $ \pi \in W^{1,p}(\Omega)$, it is straightforward that 
	\[
	\int _{\Omega} \nabla p \cdot u_N~dx =0 ,
	\]
	and the equality for $\mathbb P v$ instead of $u_n$ is ensured by taking the limit as $N \to \infty$. 
\end{proof}

The operator $A_2$ is a non-negative self-adjoint operator on $L^2_\sigma$. By the spectral theorem, there exists a resolution of the identity $\{ E(\lambda) \}_{\lambda \in \mathbb R}$ such that 
\[ 
u = \int_{-\infty}^\infty dE(\lambda) u \text{ in } L ^2 _\sigma \quad \text{for all } u \in L^2_\sigma.
\]
For any measurable function $\varphi : \mathbb R \to \mathbb C$, the operator $\varphi(A_2)$ is well-defined on $L^2_\sigma$ as
\[ 
\varphi(A_2) u = \int_{-\infty}^\infty \varphi(\lambda) dE(\lambda ) u.
\]
The following proposition is fundamental for extending the framework of Besov and Sobolev spaces from $L^2$ to the $L^p$ setting.

\vskip3mm

\begin{prop}\label{prop:0114-1}
	Let $1 \leq p \leq \infty$, $\phi \in C_0 ^\infty (\mathbb R)$, and $\phi_j(\lambda) = \phi(2^{-2j} \lambda)$ for $j \in \mathbb Z$ and $\lambda \in \mathbb R$. Then 
	\[
	\sup _{j \in \mathbb Z} \| \phi_j(\sqrt{A_2}) \|_{L^p \to L^p} < \infty . 
	\]
\end{prop}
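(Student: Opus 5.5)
The plan is to deduce the uniform $L^p$ bound from Gaussian-type (or at least polynomially decaying) pointwise bounds on the kernel of the heat semigroup $e^{-tA_2}$. The standard route, as in \cite{IMT-2019}, is the method of writing $\phi_j(\sqrt{A_2})$ through the semigroup. Set $\psi(\mu):=\phi(\sqrt{\mu})e^{\mu}$ for $\mu\ge 0$, extended to a compactly supported smooth function; this is harmless since only $\mu\ge0$ enters the spectral calculus. Then
\[
\phi_j(\sqrt{A_2})=\psi(2^{-2j}A_2)\,e^{-2^{-2j}A_2}.
\]
Writing $\psi(2^{-2j}\cdot)$ via its Fourier transform yields
\[
\phi_j(\sqrt{A_2})=\frac{1}{2\pi}\int_{\mathbb R}\hat\psi(\tau)\,e^{-2^{-2j}(1-i\tau)A_2}\,d\tau .
\]
Since $\hat\psi$ is rapidly decreasing, it suffices to show that the complex-time semigroup $e^{-zA_2}$, with $z=2^{-2j}(1-i\tau)$, satisfies
\[
\|e^{-zA_2}\|_{L^p\to L^p}\le C(1+|\tau|)^{N}
\]
uniformly in $j$, for some fixed $N$.

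For this I would use Gaussian kernel bounds for real times $t\in(0,1]$: the kernel $K_t(x,y)$ of $e^{-tA_2}$, viewed as an operator on $L^2$ after composing with $\mathbb P$, satisfies
\[
|K_t(x,y)|\le Ct^{-3/2}e^{-c|x-y|^2/t}.
\]
These bounds come from the Neumann-type (Hodge) boundary conditions. For $u\in D(B_2)$ one has $A_2\mathbb P=B_2\mathbb P$, so $e^{-tA_2}\mathbb P=e^{-tB_2}\mathbb P$. The Hodge Laplacian $B_2$ with absolute boundary conditions $u\cdot\nu=0$, $\rot u\times\nu=0$ is an elliptic system with smooth coefficients on a compact manifold with boundary, and its heat kernel has Gaussian bounds for $0<t\le1$. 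For large times I would use the strict positivity assumed at the end of Section 1, which gives exponential decay. Once real-time Gaussian bounds hold, the Davies (Phragmén–Lindelöf) argument extends them to complex times $z$ with $\operatorname{Re}z>0$. The resulting bound is
\[
|K_z(x,y)|\le C(\operatorname{Re}z)^{-3/2}\Bigl(1+\tfrac{|z|}{\operatorname{Re}z}\Bigr)^{3/2}e^{-c\operatorname{Re}(|x-y|^2/z)},
\]
and integrating in $y$ gives an $L^1$–$L^\infty$ Schur bound of order $(1+|\tau|)^{N}$. Schur's test then handles all $1\le p\le\infty$ simultaneously, including the endpoints. For $j\le 0$ the large-time regime applies, and there one simply uses that $\phi_j(\sqrt{A_2})$ acts through finitely many smooth eigenfunctions on a bounded domain. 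Alternatively, the $L^1\to L^\infty$ bound $e^{-tA}$ for $t\ge1$ handles it, since $\Omega$ is bounded.

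The main obstacle is establishing the Gaussian upper bound for $e^{-tB_2}\mathbb P$, because $\mathbb P$ is nonlocal. My first attempt would use the commutation $e^{-tB}\mathbb P=\mathbb P e^{-tB}$ on $L^2$ to reduce to the kernel of $e^{-tB}$ restricted to solenoidal fields. Then I would argue via Davies–Gaffney off-diagonal $L^2$ estimates plus Sobolev embedding of $D(A^k)$ into $L^\infty$, which gives $\|e^{-tA}\|_{L^2\to L^\infty}\lesssim t^{-3/4}$. The Coulhon–Sikora theorem (finite speed of propagation for $\cos(t\sqrt{A})$, valid for this self-adjoint second-order operator in the form sense) then yields that spectral multipliers $\phi_j(\sqrt A)$ are bounded on $L^p$ uniformly in $j$, even without pointwise Gaussian bounds.
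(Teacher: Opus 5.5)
Your route differs from the paper's and is valid once one false intermediate claim is removed.

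\textbf{Comparison with the paper.} The paper gives no self-contained proof. It defers to the argument of \cite{IMT-2018} (cf.\ Subsection 2.2 of \cite{IwKo-preprint}), a Jensen--Nakamura type argument. After rescaling, that argument controls $\phi(\theta A)$ on $L^1$ through amalgam-space and commutator estimates, and its essential input is the Gaussian upper bound for the semigroup (Lemma 2.2 of \cite{IwKo-preprint}).

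You feed the same Gaussian bound into Davies' complex-time extension, the representation $\phi(2^{-j}\sqrt{A})=\frac{1}{2\pi}\int_{\re}\hat\psi(\tau)e^{-2^{-2j}(1-i\tau)A}\,d\tau$, and Schur's test. This is shorter and handles $p=1$ and $p=\infty$ directly. The price is that you need $\int|\hat\psi(\tau)|(1+|\tau|)^{N}\,d\tau<\infty$, i.e.\ enough smoothness of $\mu\mapsto\phi(\sqrt{\mu})$ at $\mu=0$. That holds for the dyadic cut-off supported in $[2^{-1},2]$, which is all the paper uses. It fails for an arbitrary $\phi\in C^\infty_0(\re)$: for instance $\phi'(0)\neq 0$ produces a $\sqrt{\mu}$ singularity. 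Your treatment of $j\le 0$ via strict positivity is fine.

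\textbf{What must be corrected: the role of $\mathbb P$.} The kernel of $e^{-tA}\mathbb P=e^{-tB}\mathbb P$ does \emph{not} satisfy a Gaussian bound. So the step you call the main obstacle is not hard but false.
\begin{itemize}
\item Why it is false: $e^{-tB}\mathbb P\to\mathbb P$ strongly in $L^2$ as $t\to 0$, and $L^2\subset L^1$ on bounded $\Om$. A Gaussian bound would give $\sup_{0<t<1}\|e^{-tB}\mathbb P\|_{L^1\to L^1}<\infty$, hence $L^1$-boundedness of $\mathbb P$, which fails.
\item Concretely: for $|x-y|\gg\sqrt t$ the kernel behaves like the Calder\'on--Zygmund kernel $|x-y|^{-3}$ of $\mathbb P$, as the Oseen kernel does in $\re^3$.
\item Consequence for your argument: the Fourier/Schur argument cannot be run on $\phi_j(\sqrt A)\mathbb P$ either. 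The triangle inequality in $\tau$ throws away the cancellation $\int\hat\psi\,d\tau=2\pi\psi(0)=0$, and the resulting bound is not uniform in $j$.
\end{itemize}

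\textbf{The fix.} The projection should never enter. Since $B$ commutes with $\mathbb P$ and coincides with $A$ on $L^2_\sg$, you have $\phi_j(\sqrt A)u=\phi_j(\sqrt B)u$ for $u\in L^2_\sg$. So carry out the Gaussian bound, its complex-time extension and Schur's test for the heat kernel of $e^{-tB}$ (the Hodge Laplacian with absolute boundary conditions) on $L^p(\Om;\mathbb C^3)$. There these bounds hold by standard parabolic Green-matrix estimates, and for large $t$ by strict positivity. The proposition is then read, as the paper uses it, for solenoidal $u$.

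The same correction applies to your Coulhon--Sikora alternative. Davies--Gaffney estimates and truncations $\mathbf 1_E u$ destroy solenoidality, so that machinery must be applied to $B$, not to $A$ or $A\mathbb P$.
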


The proof of Proposition~\ref{prop:0114-1} is analogous to that in \cite{IMT-2018} (see also Subsection~2.2 in \cite{IwKo-preprint}). Next, we introduce the space of test functions and its dual space.

\vskip2mm

\noindent
{\bf Definition. } (Spaces of test functions and distributions) 
\begin{enumerate}
	\item We define the space of test functions $\mathcal{X}_\sigma$ by
	\[
	\mathcal{X}_\sigma := \{ u \in L^1 \cap L^2_\sigma \mid p_M(u) < \infty \text{ for all } M \in \mathbb{N} \},
	\]
	where the semi-norms are given by
	\[ 
	p_M(u) := \| u \|_{L^1} + \sup _{j \in \mathbb{N}} 2^{Mj} \| \phi_j(A_2) u \|_{L^1}.
	\]
	\item We denote by $\mathcal{X}_\sigma '$ the topological dual space of $\mathcal{X}_\sigma$.
\end{enumerate}

\vskip2mm 

The spaces $\mathcal{X}_\sigma$ and $\mathcal{X}'_\sigma$ are of non-homogeneous type. Since we may suppose that  the operator $A_2$ is strictly positive (i.e., the infimum of its spectrum is strictly positive), the non-homogeneous and homogeneous spaces are equivalent. Here, we adopt the non-homogeneous framework.

We introduce the projection $\mathbb{P}$ onto the space of divergence-free vector fields and the Stokes operator on $\mathcal{X}_\sigma'$. While the operator $A$ and its resolvent act on divergence-free fields, $B$ and its resolvent act on a more general class of vector fields.

\vskip3mm

\noindent
{\bf Definition. } (The projection $\mathbb{P}$, the Stokes operator $A$ and the Laplacian $B$ on $\mathcal{X}_\sigma'$)
\begin{enumerate}
	\item For $u \in L^p$ with $1 \leq p \leq \infty$, we define $\mathbb{P} u \in \mathcal{X}_\sigma'$ by
	\[
	{}_{\mathcal{X}_\sigma'} \langle \mathbb{P} u, v \rangle_{\mathcal{X}_\sigma} := \int_{\Omega} u \cdot \mathbb{P}_2 v \, dx \quad \text{for } v \in \mathcal{X}_\sigma.
	\]
	Hereafter, we simply write $\mathbb{P}$ for $\mathbb{P}_2$.
	
	\item For $u \in \mathcal{X}_\sigma'$, we define $Au \in \mathcal{X}_\sigma'$ by
	\[
	{}_{\mathcal{X}_\sigma'} \langle Au, v \rangle_{\mathcal{X}_\sigma} := {}_{\mathcal{X}_\sigma'} \langle u, A_2 v \rangle_{\mathcal{X}_\sigma} \quad \text{for } v \in \mathcal{X}_\sigma.
	\]
	
	\item Let $\phi \in C^\infty(\mathbb R) \cap L^\infty(\mathbb{R})$. For $u \in \mathcal{X}_\sigma'$, we define $\phi(A) u \in \mathcal{X}_\sigma'$ by
	\[
	{}_{\mathcal{X}_\sigma'} \langle \phi(A) u, v \rangle_{\mathcal{X}_\sigma} := {}_{\mathcal{X}_\sigma'} \langle u, \phi(A_2) v \rangle_{\mathcal{X}_\sigma} \quad \text{for } v \in \mathcal{X}_\sigma.
	\]
	We write $A$ for $A_2$ when the context is clear.
	
		\item Similarly, we define the operators $B$ and $\phi(B)$ on $\mathcal{X}_\sigma'$ as the dual operators of $B_2$ and $\phi(B_2)$, respectively. That is, for $u \in \mathcal{X}_\sigma'$,
	\[
	{}_{\mathcal{X}_\sigma'} \langle Bu, v \rangle_{\mathcal{X}_\sigma} := {}_{\mathcal{X}_\sigma'} \langle u, B_2 v \rangle_{\mathcal{X}_\sigma}, \quad 
	{}_{\mathcal{X}_\sigma'} \langle \phi(B) u, v \rangle_{\mathcal{X}_\sigma} := {}_{\mathcal{X}_\sigma'} \langle u, \phi(B_2) v \rangle_{\mathcal{X}_\sigma}
	\]
	for all $v \in \mathcal{X}_\sigma$. We also abbreviate $B_2$ as $B$.
\end{enumerate}

\vskip2mm 

From the definition above, it follows that
\[
f = \mathbb{P} f \quad \text{in } \mathcal{X}'_\sigma,
\]
for any $f \in \mathcal{X}'_\sigma \cap L^p$ with $1 \leq p \leq \infty$. 

The $L^p$-boundedness of the Helmholtz projection is well-established for $1 < p < \infty$ (see, e.g., \cite[Section 2]{Miy-1980}). Since our operator $\mathbb{P}$ is defined as a natural extension to the distribution space $\mathcal{X}'_\sigma$, its boundedness on $L^p$ follows immediately. 

\begin{lem} \label{lem:1222-2} 
	Let $1 < p < \infty$. Then $\mathbb{P}$ is a bounded linear operator on $L^p$. 
\end{lem}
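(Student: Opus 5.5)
The plan is to identify the distribution $\mathbb{P}u \in \mathcal{X}_\sigma'$ with the classical Helmholtz projection $\mathbb{P}_p u \in L^p_\sigma$, and then to transfer the known $L^p$ bound from \cite[Section 2]{Miy-1980}. Fix $1<p<\infty$ and $u \in L^p$. Since $\Omega$ is bounded, $\mathcal{X}_\sigma \subset L^1\cap L^2_\sigma$. I will first check that every $v\in\mathcal{X}_\sigma$ also lies in $L^{p'}$. This follows from the seminorm structure: $v = (1-\sum_{j\ge1}\phi_j(A_2))v + \sum_{j\ge 1}\phi_j(A_2)v$. Each dyadic piece is spectrally localized, so an $L^1\to L^{p'}$ Bernstein-type bound (from the heat kernel/Gaussian bounds underlying Proposition~\ref{prop:0114-1}) costs only a power of $2^{j}$. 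The decay $2^{-Mj}$ of $\|\phi_j(A_2)v\|_{L^1}$ for large $M$ absorbs that power. Alternatively, $v\in L^2$ suffices when $p\ge 2$, and for $p<2$ one uses duality symmetrically. For $v \in \mathcal{X}_\sigma$ we have $\mathbb{P}_2 v = v$, because $v\in L^2_\sigma$.

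Next I will show that
\[
{}_{\mathcal{X}_\sigma'}\langle \mathbb{P}u, v\rangle_{\mathcal{X}_\sigma} = \int_\Omega u\cdot v\,dx = \int_\Omega \mathbb{P}_p u \cdot v\,dx .
\]
Write $u = \mathbb{P}_p u + \nabla\pi$ with $\nabla\pi\in L^p$ from the weak Neumann problem \eqref{0115-1}. Then $\int_\Omega \nabla\pi\cdot v\,dx = 0$ by Lemma~\ref{lem:0910-1}, with the roles of $p,p'$ interchanged, since $v = \mathbb{P}v \in L^{p'}_\sigma$. Hence $\mathbb{P}u$ coincides in $\mathcal{X}_\sigma'$ with the $L^p$ function $\mathbb{P}_p u$. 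Consequently $\|\mathbb{P}u\|_{L^p} = \|\mathbb{P}_p u\|_{L^p}\le C\|u\|_{L^p}$ by the classical theory, and linearity is clear.

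The main obstacle is making the identification rigorous, namely showing that an element of $\mathcal{X}_\sigma'$ represented by an $L^p$ function is uniquely so represented. This requires $\mathcal{X}_\sigma$ to be dense enough in $L^{p'}_\sigma$, for example because it contains the finite eigenfunction expansions, which are smooth by elliptic regularity of $B_2$ and are dense in $L^{p'}_\sigma$. I will argue this density via Proposition~\ref{prop:0114-1}, using $L^{p'}$-convergence of $\sum_{j\le J}\phi_j(\sqrt{A})v$ to $v$.
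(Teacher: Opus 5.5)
Your proposal is correct and follows essentially the paper's argument. The paper cites the $L^p$-boundedness of $\mathbb{P}_p$ from \cite[Section~2]{Miy-1980} and remarks that $\mathbb{P}$ is its natural extension to $\mathcal{X}_\sigma'$; you make the identification $\mathbb{P}u=\mathbb{P}_pu$ in $\mathcal{X}_\sigma'$ explicit, via Lemma~\ref{lem:0910-1} and the inclusion $\mathcal{X}_\sigma\subset L^{p'}$.

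One phrasing should be fixed. Your own identity $\langle\mathbb{P}u,v\rangle=\int_\Omega u\cdot v\,dx$ shows that $L^p$ representatives of elements of $\mathcal{X}_\sigma'$ are unique only modulo gradients. So the uniqueness that density of $\mathcal{X}_\sigma$ in $L^{p'}_\sigma$ gives you, and the one you actually need, is uniqueness of the representative within $L^p_\sigma$.
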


The following lemma shows that the resolvent of $A$ can be expressed using the resolvent of $B$ and the projection $\mathbb{P}$. 

\begin{lem} 
	Let $1 \leq p \leq \infty$ and $\mu \geq 0$. The operator $(\mu + A)^{-1} \mathbb{P}: L^p \to \mathcal{X}_\sigma'$ is well-defined and satisfies 
	\begin{equation} \label{1219-1} 
		(\mu+B)^{-1} \mathbb{P} = \mathbb{P} (\mu+B)^{-1} = (\mu + A)^{-1} \mathbb{P}. 
	\end{equation}
\end{lem}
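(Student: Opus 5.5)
The proof will go through duality with test functions $v \in \mathcal{X}_\sigma$, reducing everything to the corresponding identities for the $L^2$ operators $A_2$, $B_2$ and $\mathbb{P}_2$. Take $u\in L^p$. Since $\mathbb{P}u \in \mathcal{X}_\sigma'$ by the definition above, we define
\[
{}_{\mathcal{X}_\sigma'}\langle(\mu+A)^{-1}\mathbb{P}u, v\rangle_{\mathcal{X}_\sigma} := \int_\Omega u\cdot \mathbb{P}(\mu+A_2)^{-1}v\,dx = \int_\Omega u\cdot(\mu+A_2)^{-1}v\,dx ,
\]
using $(\mu+A_2)^{-1}v\in L^2_\sigma$. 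For this to make sense we must check two things:
\begin{enumerate}
\item $(\mu+A_2)^{-1}$ maps $\mathcal{X}_\sigma$ into itself continuously;
\item $\mathcal{X}_\sigma\subset L^{p'}$ continuously, so that the pairing with $u\in L^p$ is a continuous functional.
\end{enumerate}

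\textbf{Step 1.} For item (1), with $\mu\ge0$ and $A_2$ strictly positive (as assumed throughout), the function $\lambda\mapsto(\mu+\lambda)^{-1}$ is bounded and smooth on the spectrum. It commutes with $\phi_j(A_2)$. On each dyadic piece we write $(\mu+A_2)^{-1}\phi_j(A_2)=\psi_j(A_2)\phi_j(A_2)$ with $\psi_j(\lambda)=(\mu+\lambda)^{-1}\tilde\phi(2^{-2j}\lambda)$, where $\tilde\phi$ is a fattened cutoff. Rescaling gives $\psi_j(\lambda)=2^{-2j}\tilde\psi(2^{-2j}\lambda)$ with $\tilde\psi$ uniformly in $C_0^\infty$. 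Proposition \ref{prop:0114-1} then gives uniform $L^1$ bounds, so $p_M((\mu+A_2)^{-1}v)\lesssim p_M(v)$. For the low-frequency part we use a uniform-in-$j$ bound of the same proposition type.

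For item (2), write $v=\sum_j\phi_j(A_2)v$ plus the low-frequency part. The $L^\infty$ bound $\|\phi_j(A_2)v\|_{L^\infty}\lesssim 2^{3j}\|\tilde\phi_j(A_2)v\|_{L^1}$ is a Bernstein-type estimate, which I expect follows from the same kernel bounds as Proposition \ref{prop:0114-1}. Interpolating with $L^1$ puts $v\in L^{p'}$ with norm controlled by some $p_M(v)$.

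\textbf{Step 2: commutation with $\mathbb{P}$.} We need $\mathbb{P}_2(\mu+B_2)^{-1}=(\mu+B_2)^{-1}\mathbb{P}_2=(\mu+A_2)^{-1}\mathbb{P}_2$ on $L^2$. From $B_p\mathbb{P}_p=\mathbb{P}_pB_p=A_p\mathbb{P}_p$ on $D(B_p)$, set $w=(\mu+B_2)^{-1}f$ for $f\in L^2$. Then $w\in D(B_2)$ and
\[
(\mu+A_2)\mathbb{P}w=\mathbb{P}(\mu+B_2)w=\mathbb{P}f,
\]
so $\mathbb{P}(\mu+B_2)^{-1}f=(\mu+A_2)^{-1}\mathbb{P}f$. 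Also, since $D(A_2)=\mathbb{P}D(B_2)\subset D(B_2)$, we have $\mathbb{P}w\in D(B_2)$ and $(\mu+B_2)\mathbb{P}w=\mathbb{P}f$, which gives $(\mu+B_2)^{-1}\mathbb{P}f=\mathbb{P}(\mu+B_2)^{-1}f$. Finally, $B_2$ is self-adjoint and $\mathbb{P}$ is orthogonal, so these identities dualize.

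\textbf{Step 3.} We transfer to $\mathcal{X}_\sigma'$. For $u\in L^p$ and $v\in\mathcal{X}_\sigma$, each of the three operators, by definition as a dual, pairs to $\int u\cdot G v$, where $G$ is the $L^2$ adjoint of the corresponding $L^2$ operator. By Step 2 these adjoints agree on $v\in\mathcal{X}_\sigma\subset L^2_\sigma$: all equal $(\mu+A_2)^{-1}v$, using $\mathbb{P}v=v$.

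The main obstacle is Step 1, namely that $\mathcal{X}_\sigma$ is invariant under the resolvent and embeds in $L^{p'}$ for $p=1$ or $\infty$, which is needed for well-definedness on all of $L^p$. I would handle it by the dyadic decomposition with Proposition \ref{prop:0114-1}. At $p=1$, where $p'=\infty$, this requires the Bernstein $L^1\to L^\infty$ bound, which I would take from the kernel estimates underlying \cite{IMT-2018}.
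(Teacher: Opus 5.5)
Your proposal is correct and follows essentially the same route as the paper. Both arguments establish the resolvent identity for the genuine operators, on $L^2$ in your case and on $L^p$ in the paper's, and then transfer it to $\mathcal{X}_\sigma'$ by duality against $v\in\mathcal{X}_\sigma\subset L^2_\sigma$, where all three operators reduce to $(\mu+A_2)^{-1}v$. The differences are only in detail: you derive the $L^2$ identity directly from $D(A_2)=\mathbb{P}D(B_2)$ and $B\mathbb{P}=\mathbb{P}B=A\mathbb{P}$ on $D(B_2)$ instead of citing Miyakawa's Corollary~3.6, and you explicitly check that $(\mu+A_2)^{-1}$ leaves $\mathcal{X}_\sigma$ invariant and that $\mathcal{X}_\sigma\subset L^1\cap L^\infty$, which the paper's phrase ``by the duality argument'' leaves implicit.
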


\noindent 
{\bf Remark. } 
This lemma is intended to be applied to the nonlinear term $(u \cdot \nabla) u$, which in general does not satisfy the divergence-free condition or the prescribed boundary conditions. The identity \eqref{1219-1} also holds as an operator identity on $\mathcal{X}'_\sigma$. 

\vskip2mm 

\begin{pf}
	It is shown in \cite[Corollary~3.6]{Miy-1980} that the identity
	\[
	(\mu+B_p)^{-1} \mathbb{P}_p = \mathbb{P}_p (\mu+B_p)^{-1} = (\mu + A_p)^{-1} \mathbb{P}_p
	\]
	holds for any $\mu$ in the resolvent set of $B_p$ within the $L^p$ framework for $1 < p < \infty$. By the definition of the projection $\mathbb{P} : L^p\to \mathcal{X}'_\sigma$ and the duality argument, we obtain the well-definedness and the desired identity \eqref{1219-1}.
\end{pf}

\vskip2mm

We now define the Besov and Sobolev spaces associated with the Stokes operator $A$.

\vskip3mm

\noindent
{\bf Definition. } (Besov and Sobolev spaces) \quad 
Let $s \in \mathbb{R}$ and $1 \leq p, q \leq \infty$. We define the Besov space $B^s_{p,q}(A)$ associated with the Stokes operator $A$ by
\[
B^s_{p,q}(A) := \left\{ u \in \mathcal{X}_\sigma' \Bigm| \| u \|_{B^s_{p,q}(A)} := \left\| \left\{ 2^{sj} \| \phi_j(\sqrt{A}) u \|_{L^p} \right\}_{j \in \mathbb{Z}} \right\|_{\ell^q(\mathbb{Z})} < \infty \right\},
\]
where $\{ \phi_j \}_{j \in \mathbb{Z}}$ is a dyadic partition of unity satisfying \eqref{0116-3}. 
Similarly, for $s \in \mathbb{R}$, we define the Sobolev space $H^s(A)$ by
\[
H^s(A) := \left\{ u \in \mathcal{X}_\sigma' \Bigm| \| u \|_{H^s(A)} := \| A^{\frac{s}{2}} u \|_{L^2} < \infty \right\}.
\]

\vskip2mm 

\noindent 
{\bf Remark.} In the definition above, we adopt the norm of homogeneous type as an equivalent norm for $B^s_{p,q}(A)$. We note that the homogeneous and non-homogeneous Besov spaces are equivalent under the present assumptions on $\Omega$, since the infimum of the spectrum of the Stokes operator $A$ is strictly positive. Consequently, the non-homogeneous norm introduced in \eqref{0116-1} is equivalent to the homogeneous one as follows:
\begin{equation} \notag 
\begin{split}
	&\left\| \left\{ 2^{js} \| \phi_j(\sqrt{A}) u \|_{L^p} \right\}_{j \in \mathbb{Z}} \right\|_{\ell^q (\mathbb{Z})}.
\\
		&
		\simeq 
		\left\| \left( 1 - \sum_{j=1}^\infty \phi_j(\sqrt{A}) \right) u \right\|_{L^p} + \left\| \left\{ 2^{js} \| \phi_j(\sqrt{A}) u \|_{L^p} \right\}_{j=1}^\infty \right\|_{\ell^q (\mathbb{N})} . 
\end{split}
\end{equation}

\vskip2mm

\noindent 
{\bf Definition.} We denote $D := \sqrt{A}, \sqrt{B}$.

\vskip2mm 

\noindent 
{\bf Remark.} We frequently employ the notation $D$ for spectral restriction operators instead of $\sqrt{A}$ or $\sqrt{B}$. This is justified by the fact that
\[
\phi_j(\sqrt{A}) u = \phi_j(\sqrt{B}) \mathbb{P} u = \phi_j(\sqrt{B}) u \quad \text{in } L^p_\sigma
\]
for $u \in L^p_\sigma$. The divergence-free property is crucial for this identity. Indeed, if the divergence-free condition is not assumed, $\phi_j(\sqrt{B}) \mathbb{P} u$ remains well-defined for any $u \in L^p$, whereas $\phi_j(\sqrt{A}) u$ is generally not defined for arbitrary vector fields. There is an essential difference in the boundary conditions inherent in the definitions of $A$ and $B$; specifically, operators associated with $A$ act on vector fields that intrinsically satisfy the divergence-free condition. In our subsequent analysis, the nonlinear term $(u \cdot \nabla) u$ will be handled by applying the projection $\mathbb{P}$.

\subsection{Analytical tools}

In this subsection, we provide several analytical tools required for our main results. These include elliptic estimates involving weak derivatives and the operators $A$ and $B$, as well as fundamental properties of spectral restriction operators and the Stokes semigroup.

\begin{lem} \label{lem:1222-8}
	Let $1 < p < \infty$.
	\begin{enumerate}
		\item {\rm(}Elliptic estimates{\rm)} There exists a constant $C > 0$ such that for all $u \in L^p_\sigma$,
		\[
		\| \nabla^2 A^{-1} u \|_{L^p} \leq C \| u \|_{L^p}.
		\]
		\item There exists a constant $C > 0$ such that for all $u \in L^p_\sigma$,
		\[
		\| \nabla A^{-\frac{1}{2}} u \|_{L^p} \leq C \| u \|_{L^p}.
		\]
		\item The operators $\nabla^2 B^{-1}$ and $\nabla B^{-\frac{1}{2}}$ are bounded linear operators on $L^p$.
		
		\item For any $u \in L^2$, the identity $A^{\frac{1}{2}} \mathbb{P} u = A\mathbb{P} B^{-\frac{1}{2}} u$ holds in $\mathcal{X}'_\sigma$.
	\end{enumerate}
\end{lem}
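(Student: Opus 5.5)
We prove the four assertions of Lemma~\ref{lem:1222-8} in order, using the $L^p$ theory of the Neumann-type Laplacian $B_p$ and the resolvent identity \eqref{1219-1}.

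\textbf{Part (1).} Given $u\in L^p_\sigma$, set $w=A^{-1}u$. By \eqref{1219-1} with $\mu=0$ we have $w=B^{-1}u\in D(B_p)$, and $w$ solves the elliptic system
\[
-\Delta w=u \ \text{in } \Omega,\qquad w\cdot\nu=0,\quad \rot w\times\nu=0 \ \text{on } \pt\Om .
\]
This boundary condition is complementing in the sense of Agmon--Douglis--Nirenberg. Hence the ADN $L^p$-estimate gives
\[
\|w\|_{W^{2,p}}\le C\big(\|u\|_{L^p}+\|w\|_{L^p}\big).
\]
Strict positivity of $A$ (after the shift) and boundedness of $A_p^{-1}$ on $L^p_\sigma$ (Miyakawa) give $\|w\|_{L^p}\le C\|u\|_{L^p}$. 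Together these yield $\|\nabla^2A^{-1}u\|_{L^p}\le C\|u\|_{L^p}$.

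\textbf{Part (2).} This is the step I expect to be the main obstacle, since it needs an $L^p$ square-root (Kato-type) estimate. For $p=2$ it is immediate: $\|\nabla v\|_{L^2}^2\le C(\|\rot v\|_{L^2}^2+\|\diver v\|_{L^2}^2+\|v\|_{L^2}^2)$ (Gaffney's inequality), and the right-hand side is comparable to $\|A^{1/2}v\|_{L^2}^2$ for $v\in D(A^{1/2})$. Take $v=A^{-1/2}u$.

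For general $p$ I would use the Balakrishnan formula
\[
A^{-1/2}u=\frac1\pi\int_0^\infty \mu^{-1/2}(\mu+A)^{-1}u\,d\mu .
\]
This reduces the claim to $L^p$ resolvent bounds
\[
\|\nabla(\mu+A)^{-1}\|_{L^p\to L^p}\le C(1+\mu)^{-1/2},
\]
which follow from parameter-elliptic theory for $B_p$. The Balakrishnan integral does not converge absolutely in operator norm. To get around this I would instead argue by interpolation: bounded imaginary powers of $A_p$ are known, so $D(A_p^{1/2})=[L^p_\sigma,D(A_p)]_{1/2}$. By (1) the latter space embeds into $W^{1,p}$, and the $L^p$ norm is controlled as before. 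This gives (2).

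\textbf{Part (3).} The argument is identical to (1) and (2), now for $B_p$ acting on all of $L^p$ rather than on $L^p_\sigma$. One uses the same elliptic system, the same complementing boundary condition, and the bounded imaginary powers of $B_p$.

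\textbf{Part (4).} First take $u\in L^2$ smooth enough, say $B^{-1/2}u\in D(B)$. Commuting functions of $B$ with $\mathbb P$ via \eqref{1219-1} (applied through the functional calculus) gives
\[
A\mathbb P B^{-1/2}u=A\,A^{-1/2}\mathbb Pu=A^{1/2}\mathbb Pu .
\]
For general $u\in L^2$, pair both sides with $v\in\mathcal X_\sigma$, move all operators onto $v$ by duality, and pass to the limit by density. Both sides are then seen to define the same element of $\mathcal X_\sigma'$.
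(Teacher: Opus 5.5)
Your proposal is correct. For parts (1)--(3) it takes a different route from the paper, which gives no argument there: (1) and (2) are quoted from Proposition~2.1 of the authors' earlier preprint, and (3) is declared analogous.

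You instead make these parts self-contained.
\begin{itemize}
\item Part (1) comes from the Agmon--Douglis--Nirenberg $L^p$ estimate for $-\Delta$ with the complementing boundary operator $(w\cdot\nu,\ \rot w\times\nu)$, combined with $L^p$-invertibility.
\item Part (2) comes from complex interpolation, $D(A_p^{1/2})=[L^p_\sigma,D(A_p)]_{1/2}\hookrightarrow[L^p,W^{2,p}]_{1/2}=W^{1,p}$. This sidesteps the logarithmic divergence you correctly spotted in the Balakrishnan integral for $\nabla(\mu+A)^{-1}$.
\end{itemize}
The cost is that (2) and (3) now rest entirely on bounded imaginary powers of $A_p$ and $B_p$, which you only assert are ``known''. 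You should say where this comes from. One source is the $H^\infty$-calculus for the parameter-elliptic problem $B_p+\mu$ (Lopatinskii--Shapiro condition, Denk--Hieber--Pr\"uss). It transfers to $A_p$ because $\mathbb P$ commutes with the resolvents by \eqref{1219-1}. The shift is then removed using $0\in\rho(A_p)$, so that $D(A_p^{1/2})=D((A_p+\mu)^{1/2})$. The same applies to $B_p$ on all of $L^p$, whose kernel coincides with that of $A_p$.

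For (4) you essentially follow the paper. The paper transfers \eqref{1219-1} from resolvents to $A^{-1/2}$ through the explicit formula $A_2^{-1/2}=c_0\int_0^\infty t^{-1/2}(1+tA_2)^{-1}\,dt$, which converges in $L^2$ operator norm thanks to strict positivity. You instead use the spectral calculus of the self-adjoint $B_2$, whose resolvents commute with the orthogonal projection $\mathbb P$. The two arguments are equivalent.
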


\begin{proof}
	(1) and (2) are established in \cite[Proposition~2.1]{IwKo-preprint}. 
	(3) can be proved by an argument analogous to (1) and (2). 
For (4),
we recall the identity  $A_2 ^{\frac{1}{2}} u = A_2A_2^{-\frac{1}{2}} u $ in $L^2 $ for all $u \in H^{\frac{1}{2}}(A)$. 
We utilize the integral representation of the fractional power: 
\[ A_2 ^{-\frac{1}{2}} = c_0 \int_0^\infty t^{-\frac{1}{2}} (1+t A_2)^{-1} dt , 
\]
where $c_0$ is an appropriate positive constant. This representation, together with the commutativity relation 
\eqref{1219-1}, yields that for $u \in L^2$
\[
A_2A_2^{-\frac{1}{2}} u 
= A_2 c_0 \int_0^\infty t^{-\frac{1}{2}} (1+tA_2)^{-1} u~dt 
= A_2  \mathbb P c_0 \int_0^\infty t^{-\frac{1}{2}} (1+tB_2)^{-1} u~dt 
= A_2 \mathbb P B_2 ^{-\frac{1}{2}}u. 
\]
The identity (4) then follows from the definitions of $A$ and $B$ on $\mathcal{X}'_\sigma$. 
\end{proof}

\begin{lem}\label{lem:1222-10}
	Let $m\in \mathbb N$ and $1 < p < \infty$. Then a positive constant $C$ exists such that 
\[
\| B^{-1} u \|_{W^{m+2,p}(\Omega)} \leq C \| u \|_{W^{m,p}(\Omega)} 
\quad \text{for } 	u \in W^{m,p}(\Omega). 
\]
\end{lem}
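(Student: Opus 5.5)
The plan is to reduce Lemma~\ref{lem:1222-10} to classical higher-order elliptic regularity for the Laplacian acting on vector fields with the Neumann-type (``absolute'') boundary conditions $u\cdot\nu=0$, $\rot u\times\nu=0$. This boundary value problem satisfies the Lopatinskii--Shapiro (complementing) condition; this is the content of the theory in \cite{Miy-1980}. The Agmon--Douglis--Nirenberg estimates then give, for $w\in W^{m+2,p}(\Omega)$ satisfying the boundary conditions,
\[
\|w\|_{W^{m+2,p}} \le C\big(\|\Delta w\|_{W^{m,p}} + \|w\|_{L^p}\big).
\]
Setting $w=B^{-1}u$, we have $-\Delta w=u$. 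Since $B$ is invertible under our standing positivity assumption (after the shift $B+1$ if necessary), $\|w\|_{L^p}\le C\|u\|_{L^p}$, and the claimed bound follows. The real work is therefore to know that $w=B_p^{-1}u$ actually belongs to $W^{m+2,p}$.

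I will establish this regularity by induction on $m$. The base case $m=0$ is Lemma~\ref{lem:1222-8}(3), together with the definition $D(B_p)\subset W^{2,p}$. For the induction step, assume $u\in W^{m+1,p}$ and $w\in W^{m+2,p}$ with the estimate. Localize with a partition of unity subordinate to boundary charts, and straighten the boundary by a smooth diffeomorphism. In the interior, difference quotients (or simply differentiating the equation) show that $\partial w$ solves $-\Delta(\chi\partial w)=$ data in $W^{m,p}$. Near the boundary, take tangential derivatives $\partial_\tau$ in flattened coordinates. The key point is that the boundary conditions are preserved by tangential differentiation up to lower-order terms involving derivatives of $\nu$. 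So $\chi\partial_\tau w$ satisfies the same type of boundary value problem, with right-hand side in $W^{m,p}$ and inhomogeneous boundary data of order controlled by $\|w\|_{W^{m+2,p}}$, and the ADN estimate with inhomogeneous boundary data applies. Finally, the normal derivatives are recovered from the equation $-\Delta w=u$: write $\partial_n^2 w = -u - (\text{tangential and mixed terms})$ and iterate.

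The main obstacle is the boundary step. After flattening, the absolute boundary conditions couple the normal and tangential components of $w$. One must check carefully that the commutators of $\partial_\tau$ with the boundary operators $w\mapsto w\cdot\nu$ and $w\mapsto \rot w\times\nu$ produce boundary data in the correct trace spaces $W^{m+2-k-1/p,p}(\partial\Omega)$. I would handle this by using the known characterization that the Neumann problem~\eqref{0115-1} and the curl/div system are elliptic, citing \cite{Miy-1980} (or the general ADN theory) for the complementing condition. I would then reduce the verification to the observation that all commutator terms involve $w$ with at most $m+1$ derivatives multiplied by smooth coefficients, so they are bounded by the induction hypothesis.
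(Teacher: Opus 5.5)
Your proposal is correct and takes essentially the paper's route: the paper gets the base case from Lemma~\ref{lem:1222-8} and, for higher $m$, simply cites the higher-order elliptic regularity for $-\Delta$ with $u\cdot\nu=0$, $\rot u\times\nu=0$ (Lemma~4.4 of \cite{KoYa-2009}), which is exactly the ADN-plus-bootstrap argument you reconstruct. One technical point to tighten: in the boundary step, apply the a priori estimate to tangential difference quotients of $\chi w$ rather than to $\partial_\tau w$ itself, since $\partial_\tau w$ is not yet known to lie in $W^{m+2,p}$, and then pass to the limit.
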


\begin{pf}
	The case when $m=1,2$ is already established in Lemma~\ref{lem:1222-8}. 
	When $m \geq 3$, we apply Lemma~4.4 in \cite{KoYa-2009} under the boundary condition 
$u \cdot \nu  = {\rm rot \, } u \times \nu =0$. 
\end{pf}

\begin{lem}\label{lem:1222-9}
	Let $m\in \mathbb N$ and $1 < p < \infty$. Then a positive constant $C$ exists such that 
	\[
	\| \mathbb P u \|_{W^{m,p}(\Omega)} \leq C \| u \|_{W^{m,p}(\Omega)} 
	\quad \text{for } 	u \in W^{m,p}(\Omega). 
	\]
\end{lem}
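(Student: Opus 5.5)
We derive the estimate from the representation $\mathbb P u = u - \nabla \pi$, where $\pi$ solves the weak Neumann problem \eqref{0115-1}. Since $\|u\|_{W^{m,p}}$ already controls the first term, it suffices to show
\[
\|\nabla \pi\|_{W^{m,p}(\Omega)} \le C\|u\|_{W^{m,p}(\Omega)}, \qquad m\in\mathbb N .
\]
The case $m=0$ is Lemma~\ref{lem:1222-2}. The task is therefore to upgrade the $L^p$ bound on $\nabla\pi$ to higher regularity using elliptic theory for the Neumann problem.

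For $m\ge1$ and $u\in W^{m,p}(\Omega)$, the data of \eqref{0115-1} satisfy $f:=\diver u\in W^{m-1,p}(\Omega)$ and $g:=u\cdot\nu|_{\pt\Om}\in W^{m-1/p,p}(\pt\Om)$, the latter by the trace theorem and the smoothness of $\nu$. Both are bounded by $C\|u\|_{W^{m,p}}$. The compatibility condition $\int_\Omega f\,dx=\int_{\pt\Om} g\,dS$ holds automatically by the divergence theorem.

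The standard $L^p$ regularity theory for the Neumann problem in smooth bounded domains (Agmon--Douglis--Nirenberg, or the results in \cite{Miy-1980}) then yields a solution $\pi\in W^{m+1,p}(\Omega)$, unique modulo constants, normalized to have mean zero. It satisfies
\[
\|\nabla \pi\|_{W^{m,p}} \le C\big(\|f\|_{W^{m-1,p}}+\|g\|_{W^{m-1/p,p}(\pt\Om)}\big) \le C\|u\|_{W^{m,p}} .
\]
We must check that this strong solution coincides with the weak solution used to define $\mathbb P_p$. Since $\pi\in W^{m+1,p}$ with $m\ge1$, Green's formula gives
\[
\langle\nabla\pi,\nabla\varphi\rangle=\langle u,\nabla\varphi\rangle \quad\text{for all } \varphi\in H^1_{p'} .
\]
Uniqueness of the weak gradient then identifies the two, and the estimate follows.

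The main obstacle is the precise invocation of the higher-order Neumann regularity with inhomogeneous boundary data in fractional trace spaces. To avoid trace spaces, we would alternatively treat $m=1$ by hand and induct. Localize with a partition of unity and flatten the boundary, so that tangential derivatives $\pt_\tau$ of $\nabla\pi$ solve a Neumann problem of the same form with data $\pt_\tau u$ plus lower-order commutator terms; apply the $m-1$ case to these. Normal derivatives are then recovered from the equation $\Delta\pi=\diver u$. Either route gives the claim, with the first being the one we would write down, citing the elliptic estimate (in the same spirit as Lemma~\ref{lem:1222-10} via \cite{KoYa-2009}).
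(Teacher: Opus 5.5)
Your proposal is correct and takes the paper's route: write $\mathbb P u = u - \nabla\pi$ with $\pi$ solving the Neumann problem \eqref{0115-1}, and bound $\nabla\pi$ in $W^{m,p}(\Omega)$ by the elliptic estimate (the paper simply defers to Lemma~3.3(i) of Giga--Miyakawa for this). You also check explicitly that the strong $W^{m+1,p}$ solution coincides with the weak solution defining $\mathbb P_p$, a step the paper leaves implicit.
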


\noindent {\bf Proof. }
Following the proof of Lemma 3.3 (i)  in \cite{GiMi-1985}, the above inequality is proved. 
In fact, we write $\mathbb P u = u - \nabla \pi $, where $ \pi$ is the solution of the Neumann problem \eqref{0115-1}, 
and the elliptic esimate in $W^{m,p}(\Omega)$ for $\nabla \pi$ is available. 
\hfill $\Box$

\begin{lem}\label{lem:1222-4}
	\begin{enumerate}
		\item	We have the norm equivalence that for every $u \in H^1 (A)$ 
		\[
		\| u \|_{H^1(A)} \simeq \| \nabla u \|_{L^2}. 
		\]
		Furthermore, for every $u \in L^2 $ with $\| B^{\frac{1}{2}}  u \|_{L^2} < \infty$, 
		\[
		\| B^{\frac{1}{2}} u \|_{L^2} \simeq \| \nabla u \|_{L^2}. 
		\]

		\item 
		Let $|s-2|< 1$. We have the norm equivalece that for every $u \in B^s_{2,1}(A)$
		\[ \| u \|_{B^s_{2,1}(A)} \simeq 
		\sum_{j \in \mathbb Z} 2^{(s-2)j} \|  \psi_j(D)^{\frac{1}{2}} \Delta u \|_{L^2},
		\]
		where $\psi_j$ is defined by 
	\end{enumerate}
\begin{equation} \label{0115-2}
		\psi_j(\lambda) = (1 + 2^{-2j-2} \lambda^2)^{-1} - (1 + 2^{-2j} \lambda^2)^{-1}, \quad \lambda \geq 0.
\end{equation}
\end{lem}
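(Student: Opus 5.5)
For part (1) I would start from the quadratic form of $A=A_2$ (and of $B_2$). For $u\in D(A_2)$, integrate by parts with the identity $-\Delta u=\rot\rot u-\nabla\diver u$ and the boundary conditions $u\cdot\nu=0$, $\rot u\times\nu=0$. This gives
\[
\|A^{\frac12}u\|_{L^2}^2=(A u,u)=\|\rot u\|_{L^2}^2+\|\diver u\|_{L^2}^2 ,
\]
because the boundary term $\int_{\partial\Omega}(\nu\times\rot u)\cdot u\,dS$ vanishes and the term $\int_{\partial\Omega}(\diver u)\,u\cdot\nu\,dS$ vanishes. The same identity holds for $B_2$ on $D(B_2)$.

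The Gaffney--Friedrichs inequality for fields with $u\cdot\nu=0$ on a smooth bounded domain gives
\[
\|\nabla u\|_{L^2}\le C\big(\|\rot u\|_{L^2}+\|\diver u\|_{L^2}+\|u\|_{L^2}\big).
\]
Since $A$ is strictly positive, $\|u\|_{L^2}\lesssim\|A^{\frac12}u\|_{L^2}$, so the $L^2$ term can be absorbed. For $B$ I would work with the shifted operator $B+1$, which has trivial kernel under the standing convention; otherwise the harmonic fields have to be quotiented out. The reverse inequality $\|\rot u\|+\|\diver u\|\lesssim\|\nabla u\|$ is trivial. Finally, $D(A)$ is a core for the form domain $H^1(A)$, so density extends the equivalence to all $u\in H^1(A)$; the same argument handles $B^{\frac12}$.

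For part (2) I would first show that $\psi_j$ is a smooth dyadic bump. Writing $\mu=2^{-2j}\lambda^2$,
\[
\psi_j(\lambda)=\frac{\tfrac34\mu}{(1+\mu/4)(1+\mu)} ,
\]
which is comparable to $\mu$ for $\mu\le1$ and to $\mu^{-1}$ for $\mu\ge1$. It is therefore peaked at $\lambda\sim2^j$ with two-sided polynomial tails, and $\sum_j\psi_j(\lambda)=1$ by telescoping.

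Next I reduce $\Delta$ to $A$. For $u\in B^s_{2,1}(A)$ with $s>1$ one expects $-\Delta u=Au$ in the sense of $\mathcal X'_\sigma$, using $\mathbb P\Delta=\Delta$ on $D(B)$ together with the commutation relation \eqref{1219-1}. Then
\[
\|\psi_j(D)^{\frac12}\Delta u\|_{L^2}=\|\psi_j(\sqrt A)^{\frac12}Au\|_{L^2},
\]
and the whole problem is spectral on $L^2_\sigma$. With $f=Au$, the target is $\|u\|_{B^s_{2,1}}\simeq\sum_j2^{(s-2)j}\|\psi_j^{1/2}(\sqrt A)f\|_{L^2}$, i.e.\ $\|f\|_{B^{s-2}_{2,1}}$ is equivalent to the $\psi$-version of the norm.

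Both directions use almost orthogonality in $L^2$. For $k\neq j$,
\[
\|\phi_k(\sqrt A)\psi_j^{1/2}(\sqrt A)\|_{L^2\to L^2}\lesssim 2^{-|j-k|},
\]
since $\psi_j^{1/2}\approx(2^{-j}\lambda)\wedge(2^{-j}\lambda)^{-1}$ on the support of $\phi_k$.

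For the upper bound, insert $\sum_k\phi_k$ and estimate
\[
\sum_j2^{(s-2)j}\sum_k2^{-|j-k|}\|\phi_k f\|=\sum_k\|\phi_kf\|\sum_j2^{(s-2)j}2^{-|j-k|}.
\]
The inner sum is $\lesssim2^{(s-2)k}$ exactly when $|s-2|<1$.

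For the lower bound, take $\tilde\phi_k=\phi_{k-1}+\phi_k+\phi_{k+1}$ and write
\[
\phi_k f=\phi_k\sum_j\psi_j f .
\]
The issue is that $\psi_j$ rather than $\psi_j^{1/2}$ appears here. This is handled by $\|\phi_k\psi_j f\|\le\|\phi_k\psi_j^{1/2}\|\,\|\psi_j^{1/2}f\|$ together with $\|\psi_j^{1/2}\|\le1$, which gives the same geometric decay $2^{-|j-k|}$. Summing with weights $2^{(s-2)k}$ again uses $|s-2|<1$.

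The main obstacle is justifying the identification $\Delta u=-Au$ in $\mathcal X'_\sigma$ for $u$ that is only in $B^s_{2,1}(A)$ rather than in $D(A)$. I would handle it by proving the equivalence first on the dense set of finite spectral sums, which consist of eigenfunctions lying in $D(A)\subset D(B)$ by elliptic regularity, and then passing to the limit.
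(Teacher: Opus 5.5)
Your proof is correct. Part (1) and the inequality $\sum_j 2^{(s-2)j}\|\psi_j(D)^{\frac12}\Delta u\|_{L^2}\lesssim\|u\|_{B^s_{2,1}(A)}$ follow essentially the paper's route. The paper uses the form identity $\|A^{\frac12}u_N\|_{L^2}^2=\|\rot u_N\|_{L^2}^2$ on spectral truncations $u_N$. It then invokes the Kozono--Yanagisawa bound $\|\nabla u\|_{L^2}\le C(\|\rot u\|_{L^2}+\|\diver u\|_{L^2})$, which is valid when there are no harmonic fields; your Gaffney inequality with the $L^2$ term absorbed by strict positivity is equivalent. Finally it applies the off-diagonal bound $2^{-|j-k|}$ summed against $2^{(s-2)j}$, which is where $|s-2|<1$ enters.

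Where you differ is the reverse inequality $\|u\|_{B^s_{2,1}(A)}\lesssim\sum_j 2^{(s-2)j}\|\psi_j(D)^{\frac12}\Delta u\|_{L^2}$.

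\begin{itemize}
\item \emph{Your route.} You expand $\phi_k=\sum_j\phi_k\psi_j$ by telescoping and use off-diagonal decay a second time, so you need $|s-2|<1$ twice.
\item \emph{The paper's route.} It uses a purely diagonal bound. Because $\psi_j\gtrsim 1$ on ${\rm supp}\,\phi_j$, the function $\phi_j(\lambda)\psi_j(\lambda)^{-\frac12}=(\phi_0\psi_0^{-\frac12})(2^{-j}\lambda)$ is a single bounded, compactly supported function rescaled. Hence $2^{2j}\|\phi_j(D)u\|_{L^2}\le C\|\psi_j(D)^{\frac12}\Delta u\|_{L^2}$ holds term by term, for every $s$.
\end{itemize}

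The paper's route is shorter and shows the restriction on $s$ is only needed in one direction. Your version is still valid. The appeal to $\|\psi_j^{1/2}\|\le 1$ there is unnecessary, since $\phi_k\psi_j=(\phi_k\psi_j^{1/2})\psi_j^{1/2}$ already suffices.

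On the other hand, you are more careful than the paper about what $\Delta u$ means. The paper silently identifies $-\Delta$ with $B$ (equivalently with $A$ on solenoidal fields) through the convention $D=\sqrt A,\sqrt B$. Your reduction to $f=Au$ is checked on finite spectral sums of eigenfunctions, which lie in $D(A)\subset D(B)$, and then extended by a limit. This makes explicit that for general $u\in B^s_{2,1}(A)$ the quantity $\psi_j(D)^{\frac12}\Delta u$ is to be read as $-\psi_j(\sqrt A)^{\frac12}Au$.
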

\noindent 
{\bf Remark.} 
The norm on the right-hand side of Lemma~\ref{lem:1222-4}~(2) does not directly control the boundary values of $u$; however, it imposes a restriction on $\Delta u$.

\vskip2mm 

\begin{pf}
	(1) Let $u \in H^1(A)$ and define $u_N = \displaystyle \sum _{j \leq N} \phi_j(D) u$. 
	It is straightforward  to see that $u_N$ converges to $u$ in $H^1(A)$ as $N \to \infty$, and that $u_N \in H^2(A)$. 
	By using $\Delta = - {\rm rot \, rot \, } + \nabla {\rm div }$, and the fact that ${\rm div \, } u_N = 0 \text{ in  } \Omega$ 
	and ${\rm rot \,} u_N \times \nu = 0 \text{ on } \partial \Omega$ , we have 
 	\[ 
	  \| u_N \|_{H^1(A)} = \int _{\Omega} (- \Delta u_N ) \cdot u_N ~dx 
	  = \int  _{\Omega} ( {\rm rot \, rot \, } u_N ) \cdot u_N ~dx 
		  =\| {\rm rot \, }u _N \|_{L^2}^2 
		  \leq C \| \nabla u_N \|_{L^2}^2 ,
	\]
	which proves one side of the inequality. 
	Conversely, we apply  in \cite[Theorem~2.4]{KoYa-2009} to obtain the reverse inequality. 
	Under the assumption that $\Omega$ excludes non-trivial harmonic fields, we have
\[
\| \nabla u_N \|_{L^2} \leq C \| {\rm rot \, } u_N \|_{L^2} + C \| {\rm div \, } u_N \|_{L^2}= C \| u_N \|_{H^1(A)},
\]
where we have used the divergence-free property of $u_N$. 
Thus $\{u_N\}_{N \in \mathbb N}$ is a bounded sequence in $H^1(\Omega)$ and converges to $u$ strongly in $L^2(\Omega)$. 
It follows that the limit $u$ belongs to $H^1(\Omega) \cap L^2 _\sigma$. 
Taking the limit in the equivalence $\| u_N \|_{H^1(A)} \simeq \| \nabla u_N  \|_{L^2}$ as $N \to \infty$, 
we conclude that 
\[
\| u\|_{H^1(A)} \simeq \| \nabla u \|_{L^2} .
\]

\noindent 
(2)  First, we observe that $\phi_j(\lambda) \psi_j(\lambda)^{-\frac{1}{2}}$ for $\lambda \in \mathbb{R}$ is a smooth function with compact support. Furthermore, it satisfies the scaling property
\[ \phi_j(\lambda ) \psi_j(\lambda) ^{-\frac{1}{2}} = \phi_0( 2^{-j}\lambda ) \psi_0( 2^{-j}\lambda) ^{-\frac{1}{2}}. 
\]
	By the spectral theorem, we have the uniform boundedness of $\phi_j(D) \psi_j(D)^{-\frac{1}{2}}$ with respect to $j \in \mathbb Z$ as operators on $L^2$. 
We observe that the operator $\phi_j(D)$ restricts the spectrum to a dyadic shell around $2^j$, where the operator $B$ is bounded from below by $C 2^{2j}$. Consequently, we have
\[
2^{2j}\| \phi_j (D)u \|_{L^2} \leq C  \| B \phi_j(D) \psi_j(D)^{-\frac{1}{2}}  \psi_j(D) u \|_{L^2} 
\leq C \| \psi_j(D) ^{\frac{1}{2}} \Delta u \|_{L^2}. 
\]
	Multiplying by $2^{(s-2)j}$ and summing over $j \in \mathbb{Z}$, we arrive at
\[
 \| u \|_{\dot B^s_{2,1}(A)} \leq C \sum_{j \in \mathbb Z} 2^{(s-2)j} \| \psi_j(D)^{\frac{1}{2} } \Delta u \|_{L^2} . 
\]

Conversely, 
we prove the reverse inequality with the decomposition by $\{ \phi_j \}_{j \in \mathbb Z}$. 
We note that 
\[
\psi_j(D)^{\frac{1}{2}} 
=\bigg( \frac{3}{4} \cdot 2^{-2j} B (1+ 2^{(-2j-2)} B )^{-1} (1+ 2^{-2j} B)^{-1} \bigg) ^{\frac{1}{2}} \sum _{k \in \mathbb Z} \phi_k(D)
\]
and for each $k$, it follows that 
\[
\begin{split}
&	\bigg\| 
\bigg( \frac{3}{4} \cdot 2^{-2j} B (1+ 2^{(-2j-2)} B )^{-1} (1+ 2^{-2j} B)^{-1} \bigg) ^{\frac{1}{2}}\phi_k(D) \Delta u
\bigg\| _{L^2} 
\\
\leq 
&C \dfrac{2^{-j + k}}{1+2^{-2j + 2k}}  \| \phi_k(D) \Delta u \|_{L^2}
\\
\leq 
&C 2^{-|j-k|} \| \phi_k(D) \Delta u \|_{L^2}. 
\end{split}
\]
Thus, we have
\[
\begin{split}
	\sum _{j \in \mathbb Z} 2^{(s-2)j} \| \psi_j(D) ^{\frac{1}{2}} \Delta u \|_{L^2}
\leq
& C \sum _{j \in \mathbb Z} \sum_{k \in \mathbb Z} 2^{(s-2)(j-k)} 2^{-|j-k|} \cdot  2^{(s-2)k} \| \phi_k(D) \Delta u \|_{L^2}
\\
\leq 
& C \Big( \sum _{j \in \mathbb Z} 2^{(s-2)j - |j|} \Big) \sum _{k \in \mathbb Z} 2^{(s-2)k} \| \phi_k(D) \Delta u \|_{L^2}
\\
\leq 
& C \| u \|_{B^s_{2,1}(A)}. 
\end{split}
\]
Since $|s-2| < 1$ ensures the convergence of the series $\sum_{m \in \mathbb{Z}} 2^{(s-2)m - |m|}$, the proof is complete.
\end{pf}

\begin{lem}
	Let $\psi_j $ be defined by \eqref{0115-2}. 
	Then for every $j \in \mathbb Z$
	\begin{equation}\label{1219-2}  \| \psi_j(D) \|_{L^2 \to L^2} \leq \| \psi_j(D)^{\frac{1}{2}} \|_{L^2 \to L^2}. 
\end{equation}
A positive constant $C$ exists such that for every $j \in \mathbb Z$
	\begin{equation}\label{1219-3}  \| \nabla (1+2^{-2j}B)^{-1} \|_{L^2 \to L^2} \leq  C 2^{j} \| \psi_j(D)^{\frac{1}{2}} \|_{L^2 \to L^2}. 
\end{equation}
\end{lem}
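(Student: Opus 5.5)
The plan is to reduce both inequalities to pointwise comparisons of spectral multipliers, using the explicit form of $\psi_j$. First I would rewrite \eqref{0115-2} as
\[
\psi_j(\lambda)=\frac{\tfrac34\,2^{-2j}\lambda^2}{(1+2^{-2j-2}\lambda^2)(1+2^{-2j}\lambda^2)},\qquad \lambda\ge 0 .
\]
This is the same identity already used in the proof of Lemma~\ref{lem:1222-4}(2). It shows at once that $0\le\psi_j(\lambda)\le 1$; in fact $\psi_j\le 3/4$, because $\tfrac34 2^{-2j}\lambda^2\le 1+2^{-2j}\lambda^2$.

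For \eqref{1219-2}, the bound $0\le\psi_j\le1$ gives the pointwise inequality $\psi_j(\lambda)\le\psi_j(\lambda)^{1/2}$. By the spectral theorem for the self-adjoint operator $D$ (with $D=\sqrt A$ on $L^2_\sigma$ or $D=\sqrt B$ on $L^2$), the $L^2$ operator norm of $\varphi(D)$ equals the essential supremum of $|\varphi|$ over the spectrum. Hence
\[
\|\psi_j(D)\|_{L^2\to L^2}\le\|\psi_j(D)^{1/2}\|_{L^2\to L^2}.
\]
Equivalently, $\psi_j(D)=\psi_j(D)^{1/2}\psi_j(D)^{1/2}$ with $\|\psi_j(D)^{1/2}\|\le1$.

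For \eqref{1219-3}, I would prove the stronger, vectorwise statement
\[
\|\nabla(1+2^{-2j}B)^{-1}f\|_{L^2}\le C2^j\|\psi_j(D)^{1/2}f\|_{L^2}\quad\text{for } f\in L^2 ,
\]
which implies the operator-norm inequality as written. The argument has three steps.
\begin{enumerate}
\item Set $v=(1+2^{-2j}B)^{-1}f$. This $v$ lies in $D(B)\subset D(B^{1/2})$, so Lemma~\ref{lem:1222-4}(1) applies and gives $\|\nabla v\|_{L^2}\le C\|B^{1/2}v\|_{L^2}=C\|m_j(D)f\|_{L^2}$, where $m_j(\lambda)=\lambda/(1+2^{-2j}\lambda^2)$.
\item Compare multipliers. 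Since $1+2^{-2j-2}\lambda^2\le1+2^{-2j}\lambda^2$, we get
\[
2^j\psi_j(\lambda)^{1/2}=\frac{\tfrac{\sqrt3}{2}\,\lambda}{\sqrt{(1+2^{-2j-2}\lambda^2)(1+2^{-2j}\lambda^2)}}\ge\frac{\sqrt3}{2}\,m_j(\lambda).
\]
So $m_j=2^j\psi_j^{1/2}\cdot r_j$ with $0\le r_j\le 2/\sqrt3$.
\item The spectral theorem then gives $\|m_j(D)f\|_{L^2}\le \tfrac{2}{\sqrt3}\,2^j\|\psi_j(D)^{1/2}f\|_{L^2}$, with a constant uniform in $j$.
\end{enumerate}

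The main obstacle is step 1: justifying the norm equivalence $\|\nabla v\|_{L^2}\simeq\|B^{1/2}v\|_{L^2}$ for general, not necessarily solenoidal, $v$. Lemma~\ref{lem:1222-4}(1) states this equivalence, and it relies on the standing assumption that there are no nontrivial harmonic fields, or on replacing $B$ by $B+1$. Only the inequality $\|\nabla v\|\lesssim\|B^{1/2}v\|$ is needed. For $v\in D(B)$ it also follows directly from integration by parts, $\int(-\Delta v)\cdot v=\|{\rm rot}\,v\|^2+\|{\rm div}\,v\|^2$ with the boundary conditions in $D(B)$, combined with \cite[Theorem~2.4]{KoYa-2009}. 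If $D=\sqrt A$ and $f\in L^2_\sigma$, the same argument works with $A$ in place of $B$, since the two resolvents coincide on $L^2_\sigma$ by \eqref{1219-1}.
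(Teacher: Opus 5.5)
Your proposal is correct and matches the paper's proof. For \eqref{1219-2} you use $0\le\psi_j\le 1$; the paper says the same thing as $\psi_j(D)=(\psi_j(D)^{1/2})^2$ with $\|\psi_j(D)^{1/2}\|_{L^2\to L^2}\le 1$. For \eqref{1219-3} you pass from $\nabla$ to $B^{1/2}$ via Lemma~\ref{lem:1222-4}(1) and write $B^{1/2}(1+2^{-2j}B)^{-1}$ as a uniformly bounded multiplier times $2^j\psi_j(D)^{1/2}$; your $r_j\le 2/\sqrt3$ is exactly the paper's bracketed operator. Your remark that Lemma~\ref{lem:1222-4}(1) holds for non-solenoidal $v\in D(B)$ via $\int_\Omega(-\Delta v)\cdot v\,dx=\|\rot v\|_{L^2}^2+\|\diver v\|_{L^2}^2$ is a useful supplement, since the paper's proof of that lemma only treats the solenoidal case.
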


\begin{pf}
	By the definition of $\psi_j(D)$,
	\[
	\psi _j (D) = \dfrac{3}{4} \cdot 2^{-2j} B (1+2^{-2j-2} B )^{-1} (1+2^{-2j} B)^{-1}, 
	\]
	is an non-negative operator on $L^2$, 
	and we can write $\psi_j(D) = \big( \psi_j(D)^{\frac{1}{2}} \big)^{2}$ 
	with the estimate $\| \psi_j(D)^{\frac{1}{2}} \|_{L^2 \to L^2} \leq 1 $, which proves \eqref{1219-2}. 
	
	By Lemma~\ref{lem:1222-4} (1), 
	\[ \| \nabla (1+2^{-2j}B)^{-1} \|_{L^2 \to L^2} 
	   \leq C 	 \| B^{\frac{1}{2}} (1+2^{-2j}B)^{-1} \|_{L^2 \to L^2} , 
	\]
	and we write 
	\[
	B^{\frac{1}{2}} (1+2^{-2j}B)^{-1} 
	=  
	  \bigg[ \Big( \frac{3}{4} (1+2^{(-2j-2)}B)^{-1} (1+2^{-2j}B)^{-1}
	  \Big)^{-\frac{1}{2}} 
	  (1 +2^{-2j}B)^{-1} 
	  \bigg]  
	  2^j \psi_j(D)^{\frac{1}{2}}. 
	\]
	Since the right hand side without $2^j \psi_j(D)^{\frac{1}{2}}$ is bounded uniformly with respect to $j \in \mathbb Z$, we obtain 
	the inequality \eqref{1219-3}. 
\end{pf}

\begin{lem} {\rm(}Proposition 2.3 in \cite{IwKo-preprint}{\rm)}
	A positive constant $C$ exists such that for every $1 \leq p \leq r \leq \infty$ and $j \in \mathbb Z $
	\begin{equation}\label{1219-5}  \| \phi_j(D) \|_{L^r \to L^p} \leq C 2^{3(\frac{1}{r}-\frac{1}{p})j}. 
	\end{equation}
\end{lem}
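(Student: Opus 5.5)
The plan is to prove \eqref{1219-5} by the standard route for spectral multipliers of elliptic operators on bounded domains. It rests on Gaussian-type (or at least sufficiently fast off-diagonal) pointwise bounds for the kernel of $\phi_j(D)$, and the natural starting point is the same machinery that underlies Proposition~\ref{prop:0114-1}. We first reduce to the endpoint $r=1$, $p=\infty$. Write $\phi_j = \phi_j\,\tilde\phi_j$, where $\tilde\phi_j(\lambda)=\tilde\phi(2^{-j}\lambda)$ and $\tilde\phi\in C_0^\infty$ equals $1$ on $\operatorname{supp}\phi$. Then, once we know
\[
\| \phi_j(D)\|_{L^1\to L^\infty}\le C2^{3j},\qquad \sup_j\|\phi_j(D)\|_{L^q\to L^q}<\infty \ (1\le q\le\infty),
\]
the Riesz--Thorin interpolation gives $\|\phi_j(D)\|_{L^r\to L^\infty}\le C2^{3j/r}$ and $\|\phi_j(D)\|_{L^1\to L^p}\le C2^{3j(1-1/p)}$. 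Composing $\phi_j(D)=\tilde\phi_j(D)\phi_j(D)$ as $L^r\to L^2\to L^p$ type factorizations (or interpolating directly between $(1,\infty)$, $(q,q)$) yields $2^{3(1/r-1/p)j}$ for all $1\le r\le p\le\infty$. (Note the statement should read $p\ge r$ for the exponent to be nonnegative; we treat it in that form.) The $L^q\to L^q$ bound is exactly Proposition~\ref{prop:0114-1}.

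For the $L^1\to L^\infty$ bound we use a $TT^*$ argument combined with heat-kernel bounds. Set $g(\lambda)=\phi(\lambda)e^{\lambda^2}$, so that $\phi_j(D)=g(2^{-j}D)\,e^{-2^{-2j}A}$. First, $\|g(2^{-j}D)\|_{L^2\to L^2}\le\|g\|_\infty$ by the spectral theorem. Second, $e^{-tA}$ (equivalently $e^{-tB}$ applied to $\mathbb P$, using \eqref{1219-1} and the remark on $D$) satisfies $\|e^{-tA}\|_{L^1\to L^2}\le Ct^{-3/4}$ for $0<t\le1$. This follows from the Gaussian upper bound for the Neumann-type vector Laplacian $B$, or alternatively from the Sobolev embedding $H^{2}(A)\subset L^\infty$ (via Lemma~\ref{lem:1222-8}, Lemma~\ref{lem:1222-10}, and the elliptic estimates) together with a scaling/Nash argument. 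By duality and self-adjointness we also have $\|e^{-tA}\|_{L^2\to L^\infty}\le Ct^{-3/4}$. Writing
\[
\phi_j(D)=e^{-2^{-2j-1}A}\,g(2^{-j}D)\,e^{-2^{-2j-1}A}
\]
then gives $\|\phi_j(D)\|_{L^1\to L^\infty}\le C(2^{-2j})^{-3/2}=C2^{3j}$ for $j\ge0$. For $j<0$ (low frequencies) the bound is trivial since $A$ is strictly positive: only finitely many eigenfunctions, all smooth and bounded up to the boundary, enter, and the $2^{3j}$ factor is harmless on a bounded domain once we use $\|\cdot\|_{L^\infty}\le$ eigenfunction bounds. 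Moreover, the low-frequency shells vanish identically for $2^{j+1}$ below the first eigenvalue.

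The main obstacle is the ultracontractivity estimate $\|e^{-tA}\|_{L^1\to L^2}\le Ct^{-3/4}$ uniformly for small $t$ on a bounded domain with the boundary condition $u\cdot\nu=0$, $\operatorname{rot}u\times\nu=0$. Here there is no scalar maximum principle, so one cannot quote standard Gaussian bounds directly. My first attempt would be a Nash-type inequality $\|u\|_{L^2}^{1+2/3}\le C\|\nabla u\|_{L^2}\|u\|_{L^1}^{2/3}$, valid for $H^1$ functions on a smooth bounded domain up to lower-order terms, combined with $\|\nabla u\|_{L^2}\simeq\|A^{1/2}u\|_{L^2}$ from Lemma~\ref{lem:1222-4}(1). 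The difficulty is that this argument needs $L^1$-contractivity of the semigroup, which one would try to replace by the $L^1$-boundedness of $\phi_j(D)$ from Proposition~\ref{prop:0114-1}. Failing that, I would fall back on the localization and Gaussian estimates used in \cite{IMT-2018}, citing them as in Proposition~\ref{prop:0114-1}.
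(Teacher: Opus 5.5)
Your argument is correct. The endpoint factorization $\phi_j(D)=e^{-2^{-2j-1}A}\,g(2^{-j}D)\,e^{-2^{-2j-1}A}$ for $L^1\to L^\infty$, Riesz--Thorin against the uniform bounds of Proposition~\ref{prop:0114-1}, and separate treatment of the finitely many nonzero $j<0$ is the standard route behind the result the paper only cites (Proposition~2.3 of \cite{IwKo-preprint}), and it uses the same ingredients the paper draws from that reference: the Gaussian bound for $e^{-tA}$ and Proposition~\ref{prop:0114-1}. You are also right that the hypothesis must read $r\le p$.

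The step you flag as the main obstacle is not one. The bound $\|e^{-tA}\|_{L^1\to L^2}+\|e^{-tA}\|_{L^2\to L^\infty}\le Ct^{-3/4}$ for $0<t<1$ is exactly Proposition~\ref{prop:1222-1}. It also follows directly, with no Nash argument or $L^1$-contractivity, from $\|v\|_{L^\infty}\le C\|v\|_{L^2}^{1/4}\|v\|_{H^2}^{3/4}$, the elliptic bound $\|v\|_{H^2}\le C\|Av\|_{L^2}$ (Lemma~\ref{lem:1222-8} and positivity of $A$), and $\|Ae^{-tA}\|_{L^2\to L^2}\le (et)^{-1}$, followed by duality for the $L^1\to L^2$ bound.
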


\begin{lem}\label{lem:1222-5} {\rm(}Embedding in Besov spaces{\rm)}
	Let $s > 0$, $1 \leq r \leq p \leq \infty$ and $1 \leq q \leq \infty$. Then 
	\[
	\begin{split}
	\| u \|_{B^0_{p,q}(A)} \leq &C \| u \|_{B^{3(\frac{1}{r}-\frac{1}{p})}_{r,q}(A)}, 
	\quad u \in B^{3(\frac{1}{r}-\frac{1}{p})}_{r,q}(A), 
\\
\| u \|_{B^0_{p,1}(A)} \leq &C \| u \|_{B^{s}_{p,\infty}(A)}, 
  \quad u \in B^s_{p,\infty} (A). 
\end{split}
\]
Furthermore, 
if $ 1 \leq r \leq p < \infty$ or $1 \leq r < p = \infty$, then 
\[  
	\| \nabla u \|_{L^p} \leq  C \| u \|_{B^{1+3(\frac{1}{r}-\frac{1}{p})}_{r,1}(A)}, 
	\quad u \in B^{1+3(\frac{1}{r}-\frac{1}{p})}_{r,1}(A). 
\]
\end{lem}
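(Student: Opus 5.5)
The plan is to treat the three inequalities separately, each by the dyadic decomposition $u=\sum_{j\in\mathbb Z}\phi_j(D)u$. This expansion converges in $\mathcal X_\sigma'$ by the spectral theorem. Moreover, because $A$ is strictly positive, only $j\ge j_0$ actually contribute, for some $j_0\in\mathbb Z$.

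\emph{First embedding.} Choose $\tilde\phi\in C_0^\infty$ with $\tilde\phi\equiv1$ on $\operatorname{supp}\phi$, and set $\tilde\phi_j(\lambda)=\tilde\phi(2^{-j}\lambda)$, so that $\phi_j(D)=\tilde\phi_j(D)\phi_j(D)$. The bound \eqref{1219-5} for $\tilde\phi_j(D)$ holds by the same proof (it only uses that $\tilde\phi\in C_0^\infty$). It gives, for $r\le p$,
\[
\|\phi_j(D)u\|_{L^p}\le C2^{3(\frac1r-\frac1p)j}\|\phi_j(D)u\|_{L^r}.
\]
Taking the $\ell^q$ norm in $j$ yields the first inequality.

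\emph{Second embedding.} Apply the triangle inequality:
\[
\sum_j\|\phi_j(D)u\|_{L^p}\le \sum_{j\ge j_0}2^{-sj}\cdot 2^{sj}\|\phi_j(D)u\|_{L^p}\le C\|u\|_{B^s_{p,\infty}(A)}.
\]
The sum is finite because $s>0$ and the low frequencies are cut off at $j_0$ by strict positivity of $A$. This is exactly where the inhomogeneous character is used.

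\emph{Gradient estimate.} Write $\nabla u=\sum_j\nabla\phi_j(D)u$ and aim for the single-block bound
\[
\|\nabla\phi_j(D)u\|_{L^p}\le C2^{j}2^{3(\frac1r-\frac1p)j}\|\phi_j(D)u\|_{L^r}.
\]
Summing over $j$ then gives the claim.

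\emph{Factorization for the block bound.} For $1<p<\infty$, write
\[
\nabla\phi_j(D)=\big(\nabla A^{-\frac12}\big)\big(A^{\frac12}\tilde\phi_j(D)\big)\phi_j(D).
\]
By Lemma \ref{lem:1222-8} (2), $\nabla A^{-1/2}$ is bounded on $L^p_\sigma$. Also $A^{1/2}\tilde\phi_j(D)=2^j\,\chi(2^{-j}D)$ with $\chi(\lambda)=\lambda\tilde\phi(\lambda)\in C_0^\infty$, so \eqref{1219-5} gives
\[
\|A^{\frac12}\tilde\phi_j(D)\|_{L^r\to L^p}\le C2^{j}2^{3(\frac1r-\frac1p)j}.
\]

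\emph{Main obstacle: the endpoint $p=\infty$ (with $r<p$).} Here $\nabla A^{-1/2}$ is not bounded on $L^\infty$, so the factorization fails. I would instead pass through an intermediate exponent $\tilde p$ with $3<\tilde p<\infty$, chosen with $r\le\tilde p$. The steps are:
\begin{itemize}
\item Sobolev embedding $W^{1,\tilde p}(\Omega)\hookrightarrow L^\infty$ applied to $\nabla\phi_j(D)u$, which reduces matters to bounding $\|\nabla^2\phi_j(D)u\|_{L^{\tilde p}}$ and $\|\nabla\phi_j(D)u\|_{L^{\tilde p}}$.
\item For the second derivatives, write $\nabla^2\phi_j(D)=(\nabla^2A^{-1})(A\tilde\phi_j(D))\phi_j(D)$. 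Lemma \ref{lem:1222-8} (1) bounds $\nabla^2A^{-1}$, and \eqref{1219-5} bounds the spectral multiplier.
\item Together these give $C2^{2j}2^{3(\frac1r-\frac1{\tilde p})j}\|\phi_j(D)u\|_{L^r}$ and $C2^{j}2^{3(\frac1r-\frac1{\tilde p})j}\|\phi_j(D)u\|_{L^r}$ for the two terms.
\end{itemize}

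This crude combination loses a factor $2^{(1-3/\tilde p)j}$ relative to the target. To remove the loss I would use a scaled Gagliardo--Nirenberg inequality,
\[
\|f\|_{L^\infty}\le C\|f\|_{L^{\tilde p}}^{1-\frac3{\tilde p}}\|f\|_{W^{1,\tilde p}}^{\frac3{\tilde p}}.
\]
This is valid on the bounded smooth domain $\Omega$ for $\tilde p>3$; on bounded domains $\|f\|_{W^{1,\tilde p}}$ includes $\|f\|_{L^{\tilde p}}$, which is harmless since $2^j\gtrsim1$. Applying it to $f=\nabla\phi_j(D)u$ and inserting the two bounds yields exactly
\[
\|\nabla\phi_j(D)u\|_{L^\infty}\le C2^{j}2^{\frac{3}{r}j}\|\phi_j(D)u\|_{L^r},
\]
and summing over $j$ then closes the case $p=\infty$. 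The strict inequality $r<p$ is what allows choosing such an intermediate $\tilde p$.
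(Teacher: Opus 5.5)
Your proposal is correct and follows essentially the same route as the paper. The two embeddings come from \eqref{1219-5} applied blockwise, using your $\tilde\phi_j$ in place of the paper's $\Phi_j=\phi_{j-1}+\phi_j+\phi_{j+1}$, together with positivity of the spectrum. The gradient bound for $1<p<\infty$ comes from factoring through $\nabla A^{-1/2}$ (the paper uses $\nabla B^{-1/2}$). The case $p=\infty$ uses Gagliardo--Nirenberg at an intermediate exponent $\tilde p>3$. There your version with the full $W^{1,\tilde p}$ norm and the absorption $2^j\lesssim 2^{2j}$ is more accurate than the homogeneous inequality the paper writes, which fails for constants on a bounded domain.

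One caveat, which the paper's proof shares: the case $p=r=1$, allowed by the hypothesis $1\le r\le p<\infty$, is not covered. Lemma~\ref{lem:1222-8} gives boundedness of $\nabla A^{-1/2}$ only for $1<p<\infty$, so your factorization does not reach it. This case is never used later in the paper.
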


\begin{pf}
	The proofs of the first and second inequalities are analogous to those of \cite[Proposition~3.2]{IMT-2019}, based on the $L^r$-$L^p$ boundedness in \eqref{1219-5} and the positivity of the spectrum.

	For the third inequality, we utilize the resolution of the identity:
	\[
	u = \sum_{j \in \mathbb{Z}} \phi_j(D) u,
	\]
	which is justified by an argument similar to that in \cite[Lemma~4.5]{IMT-2019} for the Dirichlet Laplacian. Let $\Phi_j(D) := \phi_{j-1}(D) + \phi_j(D) + \phi_{j+1}(D)$. Then we have the property $\phi_j(D) = \Phi_j(D) \phi_j(D)$. Moreover, 
	if $p < \infty$, then  
	the following $L^r $-$ L^p$ boundedness holds:
	\[
	\| \nabla \Phi_j(D) \|_{L^r \to L^p} \leq C \| B^{\frac{1}{2}} \Phi_j(D) \|_{L^r \to L^p} \leq C 2^{j + 3 \left( \frac{1}{r} - \frac{1}{p} \right) j}.
	\]
		Applying these estimates, we obtain
	\[
	\| \nabla u \|_{L^p} \leq \sum_{j \in \mathbb{Z}} \| \nabla \Phi_j(D) \phi_j(D) u \|_{L^p} \leq C \sum_{j \in \mathbb{Z}} 2^{j + 3 \left( \frac{1}{r} - \frac{1}{p} \right) j} \| \phi_j(D) u \|_{L^p},
	\]
	which proves the proof of the third inequality 
	for $p < \infty$. 
	When $p = \infty$, we utilize the interpolation inequality (see \cite{Niren-1959}) and the elliptic estimate. Let $\max \{ r, n\} < r_0 < \infty$. Then 
	\[
\begin{split}
		\| \nabla \Phi_j(D) \phi_j(D)u \|_{L^\infty}
&	\leq C \| \nabla \Phi_j (D)  \phi_j(D) u \|_{L^{r_0}} ^{1-\frac{3}{r_0}}
	         \| \nabla^2 \Phi_j (D)  \phi_j(D) u \|_{L^{r_0}} ^{ \frac{3}{r_0}}
\\
&		\leq C \| B^{\frac{1}{2}} \Phi_j (D)  \phi_j(D) u \|_{L^{r_0}} ^{1-\frac{3}{r_0}}
	\| B \Phi_j (D) \phi_j(D)  u \|_{L^{r_0}} ^{ \frac{3}{r_0}}         
\\
& \leq C 2^{(1 + \frac{3}{r_0})j} \|  \phi_j(D) u \|_{L^{r_0}}
\\
& \leq C 2^{(1 + \frac{3}{r})j} \|  \phi_j(D) u \|_{L^{r}},
\end{split}
	\]
which completes the proof for $p = \infty$. 	
\end{pf}

\begin{lem}\label{lem:1222-3} {\rm(}Sobolev embedding{\rm)} 
	Let $1 < r \leq p < \infty$. Then 
	\[ \| u \|_{L^p} \leq C \| A^{\frac{3}{2}(\frac{1}{r}-\frac{1}{p})}u \|_{L^r}, 
	\quad \text{for } u \text{ with } \| A^{\frac{3}{2}(\frac{1}{r}-\frac{1}{p})}u \|_{L^r} < \infty .
	\]
	Let $1 < p < \infty  $ and $s\geq 1+ 3(1/r - 1/p)$. Then 
	\[ \| \nabla u \|_{L^p} \leq C \| A^{\frac{s}{2}} u \|_{L^r}, 
		\quad \text{for } u \text{ with } \| A^{\frac{s}{2}}u \|_{L^r} < \infty .
	\]
\end{lem}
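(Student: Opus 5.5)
Both inequalities reduce to the $L^r$--$L^p$ smoothing of \eqref{1219-5}, the elliptic bounds of Lemma~\ref{lem:1222-8} and the dyadic resolution $u=\sum_{j\in\mathbb Z}\phi_j(D)u$; the plan is to work spectral block by block. Put $\alpha=\frac32(\frac1r-\frac1p)$, so the first inequality reads $\|u\|_{L^p}\le C\|A^{\alpha}u\|_{L^r}$. By Lemma~\ref{lem:1222-5}, the embedding $B^0_{p,2}(A)\hookrightarrow L^p$ and $\|A^\alpha u\|_{B^0_{r,2}}\lesssim\|A^\alpha u\|_{L^r}$ would suffice. This is a Littlewood--Paley square-function fact, which is not available from what has been stated. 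I will therefore avoid it and use a Hardy--Littlewood--Sobolev type argument directly.

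First, write $f=A^\alpha u$, so $u=A^{-\alpha}f$. Use the subordination formula
\[
A^{-\alpha}f=\frac1{\Gamma(\alpha)}\int_0^\infty t^{\alpha-1}e^{-tA}f\,dt .
\]
The Stokes semigroup satisfies the bounds $\|e^{-tA}\|_{L^r\to L^r}\le C$ and $\|e^{-tA}\|_{L^r\to L^\infty}\le Ct^{-\frac{3}{2r}}$. These follow from \eqref{1219-5} and Proposition~\ref{prop:0114-1} by decomposing $e^{-t\lambda^2}=\sum_j e^{-t\lambda^2}\phi_j(\lambda)$, where the terms with $2^{2j}t\gg1$ are exponentially small. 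Strict positivity of $A$ gives exponential decay for large $t$.

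Next, split the integral at a level $T$ depending on the point $x$ and on $\|f\|_{L^r}$, exactly as in Hedberg's proof of HLS. For small $t$, bound by the maximal-type quantity $\sup_t|e^{-tA}f|$ times $T^\alpha$. For large $t$, use the $L^r\to L^\infty$ bound to get $T^{\alpha-\frac{3}{2r}}\|f\|_{L^r}$. Optimizing in $T$ yields the weak-type estimate $\|A^{-\alpha}f\|_{L^{p,\infty}}\lesssim\|f\|_{L^r}$. Marcinkiewicz interpolation between two such pairs $(r_1,p_1),(r_2,p_2)$ with the same $\alpha$ then gives the strong estimate, using $1<r<p<\infty$.

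\textbf{Main obstacle.} The delicate step is the pointwise maximal bound $\sup_t|e^{-tA}f(x)|\le C\,Mf(x)$, which needs Gaussian-type kernel bounds for the Neumann Stokes semigroup. If these are not available, I would instead prove boundedness of imaginary powers $A^{i\tau}$ on $L^r$ (via spectral multipliers built from Proposition~\ref{prop:0114-1}). Stein interpolation between the line $\mathrm{Re}\,z=0$ and a point where $A^{-z}$ maps $L^r\to L^\infty$ (or $L^1\to L^p$) by \eqref{1219-5} would then suffice.

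For the second inequality, take $s_0=1+3(\frac1r-\frac1p)$. Since $A$ is strictly positive, $A^{-(s-s_0)/2}$ is bounded on $L^r$, so it is enough to treat $s=s_0$. Write
\[
\nabla u=\nabla A^{-\frac12}\,\bigl(A^{\frac{s_0-1}{2}}u\bigr).
\]
Lemma~\ref{lem:1222-8}(2) on $L^p$ gives $\|\nabla u\|_{L^p}\le C\|A^{\frac{s_0-1}2}u\|_{L^p}$. Applying the first inequality to $A^{\frac{s_0-1}2}u$, with exponent $\frac{s_0-1}{2}=\frac32(\frac1r-\frac1p)$, bounds this by $C\|A^{\frac{s_0}2}u\|_{L^r}$. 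Here I use that powers of $A$ commute and stay in $L_\sigma$, so Lemma~\ref{lem:1222-8} applies.
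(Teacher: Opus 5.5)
Your main line is correct. For the first inequality it is also more explicit than the paper, which offers no argument beyond ``similar to \cite[Theorem~1.4]{FI-2024}''. That is the Gaussian-upper-bound framework the paper also uses for Proposition~\ref{prop:1222-1}, and it removes the obstacle you flag. The proof of Proposition~\ref{prop:1222-1} relies on a Gaussian upper bound for $e^{-tA}$ \cite[Lemma~2.2]{IwKo-preprint}, which gives $\sup_{0<t\le 1}|e^{-tA}f(x)|\le C\,Mf(x)$. For $t\ge 1$, the spectral gap and the boundedness of $\Omega$ give $|e^{-tA}f(x)|\le Ce^{-ct}\|f\|_{L^1}\le C\,Mf(x)$. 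Take the $L^r\to L^\infty$ bound the same way, from Proposition~\ref{prop:1222-1} plus the spectral gap. Proposition~\ref{prop:0114-1} is stated for a fixed cutoff and does not by itself supply the decay factor $e^{-ct2^{2j}}$ that your dyadic derivation needs. Hedberg's optimization then gives
\[
|A^{-\alpha}f(x)|\le C\,\bigl(Mf(x)\bigr)^{r/p}\,\|f\|_{L^r}^{1-r/p}.
\]
Since $r>1$, the strong maximal theorem yields $\|A^{-\alpha}f\|_{L^p}\le C\|f\|_{L^r}$ directly, so the weak-type bound and the Marcinkiewicz step are unnecessary. What your route buys is a self-contained proof resting only on the kernel bound, instead of a citation.

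Drop the fallback via imaginary powers, because it cannot reach the sharp exponent. By \eqref{1219-5}, $A^{-z}$ maps $L^{r_1}\to L^\infty$ only for $\mathrm{Re}\,z>\frac{3}{2r_1}$ strictly. At equality the dyadic series $\sum_j 2^{(3/r_1-2\mathrm{Re}\,z)j}$ diverges, and the critical embedding into $L^\infty$ is false anyway. Stein interpolation against $\mathrm{Re}\,z=0$ therefore only gives $\|u\|_{L^p}\le C\|A^{\alpha+\delta}u\|_{L^r}$ for some $\delta>0$. An $L^1\to L^{p_1}$ endpoint has the same defect. The Gaussian-bound route is the one that works.

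For the second inequality you follow the paper's chain, but the exponents are misplaced as written. First, $\nabla A^{-1/2}\bigl(A^{\frac{s_0-1}{2}}u\bigr)=\nabla A^{\frac{s_0-2}{2}}u$, which differs from $\nabla u$ unless $s_0=2$. Second, applying the first inequality to $A^{\frac{s_0-1}{2}}u$ bounds it by $\|A^{s_0-1}u\|_{L^r}$, not by $\|A^{s_0/2}u\|_{L^r}$. The function to insert is $A^{1/2}u$:
\[
\|\nabla u\|_{L^p}=\|\nabla A^{-\frac12}(A^{\frac12}u)\|_{L^p}\le C\|A^{\frac12}u\|_{L^p}\le C\|A^{\frac32(\frac1r-\frac1p)}A^{\frac12}u\|_{L^r}=C\|A^{\frac{s_0}{2}}u\|_{L^r}.
\]
Follow this with $\|A^{s_0/2}u\|_{L^r}\le C\|A^{s/2}u\|_{L^r}$, which comes from positivity of $A$. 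With that correction your argument is exactly the paper's.
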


\begin{pf}
	The first inequality can be proved in a similar manner to \cite[Theorem~1.4]{FI-2024}. For the second inequality, we apply Lemma~\ref{lem:1222-8} and the first inequality to obtain
	\[
	\| \nabla u \|_{L^p} \leq C \| A^{\frac{1}{2}} u \|_{L^p} \leq C \| A^{\frac{1}{2} + \frac{3}{2} \left( \frac{1}{r} - \frac{1}{p} \right)} u \|_{L^r}.
	\]
	An embedding between regularity indices, ensured by the positivity of the spectrum,  yields
	\[
	\| A^{\frac{1}{2} + \frac{3}{2} \left( \frac{1}{r} - \frac{1}{p} \right)} u \|_{L^r} \leq C \| A^{\frac{s}{2}} u \|_{L^r},
	\]
	which completes the proof.
\end{pf}

\begin{prop}\label{prop:1222-1}
	Let $s > 0$ and $1 \leq r \leq p \leq \infty$. Then for $0< t< 1$
	\[
	\begin{split}
		  \| e^{-tA} u_0 \|_{L^{p}} \leq  
		  &C t^{- \frac{3}{2}(\frac{1}{r}-\frac{1}{p})} \| u_0 \|_{L^{r}}, 
	\quad \text{ for } u \in \mathcal X'_\sigma \cap L^r, 
\\
  \| A^{s}e^{-tA} u_0 \|_{L^{p}} \leq  
  &C t^{- \frac{s}{2}} \| u_0 \|_{L^{p}},
  \quad \text{for } u \in \mathcal X'_\sigma \cap L^r . 
\end{split}
\]
\end{prop}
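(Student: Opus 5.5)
The plan is a dyadic decomposition by Proposition~\ref{prop:0114-1}: on each spectral shell I bound the symbol by a constant times its supremum on that shell, and then sum over $j$. The power $t^{-\frac{s}{2}}$ matches one spectral factor $\lambda^{s}$ in the variable $\lambda$ of $D=\sqrt{A}$, i.e.\ the operator $D^{s}=A^{\frac{s}{2}}$. So the quantity I will actually estimate is $\|A^{\frac{s}{2}}e^{-tA}u_0\|_{L^p}$. I will flag this reading explicitly, because for the literal $A^{s}$ the rate $t^{-\frac s2}$ fails. Taking $p=r=2$ and $u_0$ an eigenfunction with eigenvalue $\mu\simeq t^{-1}$ of $A$, the spectral theorem gives
\[
\|A^{s}e^{-tA}\|_{L^2\to L^2}=\sup_{\mu}\mu^{s}e^{-t\mu}\simeq t^{-s}.
\]
Since $t^{-s}\gg t^{-\frac s2}$ for small $t$, the literal statement is false as soon as the eigenvalues are unbounded. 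The argument below therefore proves the second inequality in the form $\|A^{\frac s2}e^{-tA}u_0\|_{L^p}\le Ct^{-\frac s2}\|u_0\|_{L^p}$, with the same method giving $Ct^{-s}$ for $A^{s}$.

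\textbf{First inequality.} For the first inequality I write $e^{-tA}=\sum_{j}\phi_j(D)e^{-tA}$ and use $\phi_j=\phi_j\Phi_j$ with $\Phi_j:=\phi_{j-1}+\phi_j+\phi_{j+1}$. The function $\lambda\mapsto \Phi_j(\lambda)e^{-t\lambda^2}$ equals $g_j(2^{-j}\lambda)$, where $g_j(\mu)=\Phi_0(\mu)e^{-t2^{2j}\mu^2}$. This $g_j$ is a smooth compactly supported function whose $C^N$ norms are bounded by $C_N(1+t2^{2j})^{N}e^{-ct2^{2j}}\le C_N e^{-c't2^{2j}}$. The proof of Proposition~\ref{prop:0114-1} (from \cite{IMT-2018}) depends only on finitely many such derivative norms of the symbol. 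It therefore yields
\[
\|\Phi_j(D)e^{-tA}\|_{L^p\to L^p}\le Ce^{-ct2^{2j}},
\]
and composing with \eqref{1219-5} gives
\[
\|\phi_j(D)e^{-tA}\|_{L^r\to L^p}\le C2^{3(\frac1r-\frac1p)j}e^{-ct2^{2j}}.
\]
Summing over $j\in\mathbb Z$ (only $j\gtrsim 0$ matters, by positivity of the spectrum) gives the bound $\sum_j 2^{3(\frac1r-\frac1p)j}e^{-ct2^{2j}}\le Ct^{-\frac32(\frac1r-\frac1p)}$ for $0<t<1$. The series converges at $+\infty$ because of the Gaussian factor and at $-\infty$ because the spectrum is bounded below. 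When $r=p$ the exponent is zero; there I instead use $\|e^{-tA}\|_{L^p\to L^p}\le C$, obtained directly from Proposition~\ref{prop:0114-1} applied to the single symbol $e^{-\lambda^2}$ rescaled at $\lambda\sim t^{-1/2}$.

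\textbf{Second inequality.} For the second inequality I do the same with the symbol $\lambda^{s}e^{-t\lambda^2}\Phi_j(\lambda)=2^{sj}\,\tilde g_j(2^{-j}\lambda)$, where $\tilde g_j(\mu)=\mu^s\Phi_0(\mu)e^{-t2^{2j}\mu^2}$ is again smooth with uniformly controlled derivatives times $e^{-ct2^{2j}}$. This gives
\[
\|A^{\frac s2}\phi_j(D)e^{-tA}\|_{L^p\to L^p}\le C2^{sj}e^{-ct2^{2j}}.
\]
Then $\sum_j 2^{sj}e^{-ct2^{2j}}\le Ct^{-\frac s2}$, with the low frequencies summable since $s>0$ and the spectrum is bounded away from $0$. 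Convergence of $\sum_j\phi_j(D)u_0$ in $\mathcal X_\sigma'$ is taken from the resolution of the identity used in the proof of Lemma~\ref{lem:1222-5}.

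\textbf{Main obstacle.} The main obstacle is the first step of each part: the uniform-in-$j$ $L^p$ bound for rescaled symbols that depend on the extra parameter $t2^{2j}$. Proposition~\ref{prop:0114-1} is stated only for a fixed $\phi$. I would re-enter its proof, a Gaussian-bound/amalgam argument as in \cite{IMT-2018}, and check that the constant is controlled by a fixed $C^N$ norm of the symbol. That would allow the factor $e^{-ct2^{2j}}$ to be carried through. For $p=1,\infty$ I would rely on that same kernel estimate.
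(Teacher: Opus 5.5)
Your reading of the second estimate is right. With $A^{s}$ taken literally, the rate must be $t^{-s}$, and your eigenfunction example correctly shows that $t^{-s/2}$ fails. The paper in fact uses the estimate in the form $\|A^{\frac s2}e^{-tA}\|_{L^p\to L^p}\le Ct^{-\frac s2}$; see the $H^1(A)$ and $H^{s_1}(A)$ bounds in Propositions~\ref{prop:1128-1} and \ref{prop:1203-1}. The hypothesis on that line should also read $u_0\in\mathcal X_\sigma'\cap L^p$.

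With this correction your argument works, but it takes a different route from the paper. The paper does no frequency decomposition. It takes the Gaussian upper bound for the kernel of $e^{-tA}$ (Lemma~2.2 of \cite{IwKo-preprint}) and proceeds as in Theorem~1.6(ii) of \cite{FI-2024}. From a Gaussian kernel bound, the $L^r$--$L^p$ estimate follows at once by Young's inequality for every $1\le r\le p\le\infty$, including $r=p$ and the endpoints, with no summation over $j$. You instead route everything through Proposition~\ref{prop:0114-1} and the Bernstein-type bound \eqref{1219-5}. This gives the stronger localized estimate $\|\phi_j(D)e^{-tA}\|_{L^r\to L^p}\le C2^{3(\frac1r-\frac1p)j}e^{-ct2^{2j}}$, which is what Chemin--Lerner type arguments need. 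The price is a quantitative form of Proposition~\ref{prop:0114-1}: its constant must be controlled by finitely many derivatives of a symbol supported in a fixed annulus, uniformly in the parameter $t2^{2j}$. This holds for the multiplier theorem of \cite{IMT-2018}, but it has to be verified, as you say.

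One point needs fixing. For $r=p$, summing your dyadic bounds over the $j$ that meet the spectrum only gives $\sum_j e^{-ct2^{2j}}\simeq\log(1/t)$ as $t\to0$. So a separate uniform bound for $e^{-tA}$ on $L^p$ is genuinely needed. Proposition~\ref{prop:0114-1} as stated, for $\phi\in C_0^\infty$, does not cover the symbol $e^{-\lambda^2}$. You need either the Schwartz-class version of the multiplier theorem or, more directly, the Gaussian bound itself. Since that multiplier theorem is itself proved from Gaussian bounds, your approach rests on the same input as the paper's, with one extra layer in between.
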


\noindent 
{\bf Remark.} Proposition~\ref{prop:1222-1} provides local-in-time estimates. While global-in-time estimates with exponential decay could be expected, the present paper focuses primarily on the local-in-time analysis.

\vskip2mm

\begin{pf}
We know that the semigroup $e^{-tA}$ satsifies the Gaussian upper bound (see Lemma~2.2~\cite{IwKo-preprint}), 
and the proof of Proposition~\ref{prop:1222-1} is similar to Theorem~1.6 (ii) in \cite{FI-2024}. 
\end{pf}

\begin{prop} \label{prop:1222-7}
	{\rm(}Maximal regularity{\rm)} 
	Let $T>0$. A positive constant $C$, independent of $T$, exists such that for every $u_0 \in H^1(A)$
	\[  \|  e^{-tA} u_0 \|_{L^2(0,T ; H^2(A))} \leq C \| u_0 \|_{H^1(A)}. 
	\]
For every $f \in L^2 (0,T; H^2 (A))$
	\[  \Big\| \int _0^t e^{-(t-\tau) A} f(\tau) ~d\tau \Big\|_{L^2(0,T ; H^2(A))} \leq C \| f \|_{L^2(0,T; H^2(A))}.
\]
\end{prop}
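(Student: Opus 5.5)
The plan is to reduce everything to the spectral theorem for the self-adjoint operator $A=A_2$ on $L^2_\sigma$, with the norm of $H^2(A)$ given by $\|Au\|_{L^2}$. Since $A$ is self-adjoint and nonnegative (indeed strictly positive by our standing assumption), both estimates become scalar inequalities for each spectral value $\lambda\ge 0$, integrated against the spectral measure $d\|E(\lambda)u\|^2$. No constant will depend on $T$, since every time integral can be extended to $(0,\infty)$.

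For the first estimate, write $\|e^{-tA}u_0\|_{H^2(A)}^2=\int_0^\infty \lambda^2 e^{-2t\lambda}\,d\|E(\lambda)u_0\|_{L^2}^2$. Integrating in $t$ over $(0,T)\subset(0,\infty)$ and applying Fubini (the integrand is nonnegative) gives
\[
\int_0^T\|e^{-tA}u_0\|_{H^2(A)}^2dt\le \int_0^\infty \lambda^2\cdot\frac{1}{2\lambda}\,d\|E(\lambda)u_0\|^2=\frac12\|A^{1/2}u_0\|_{L^2}^2=\frac12\|u_0\|_{H^1(A)}^2,
\]
so $C=2^{-1/2}$ works.

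For the second estimate, the $L^2$ regularity is not lost, because the norm on the right is the same $H^2(A)$ norm as on the left. So I would first commute $A$ past the semigroup: $A\int_0^t e^{-(t-\tau)A}f(\tau)\,d\tau=\int_0^t e^{-(t-\tau)A}Af(\tau)\,d\tau$. This is justified for $f\in L^2(0,T;D(A))$ because $A$ is closed and the Bochner integral converges in the graph norm. Contractivity of $e^{-sA}$ on $L^2_\sigma$ then gives
\[
\Big\|A\int_0^t e^{-(t-\tau)A}f\,d\tau\Big\|_{L^2}\le\int_0^t\|Af(\tau)\|_{L^2}\,d\tau .
\]
This is a convolution with the kernel $\mathbf 1_{(0,T)}$, so Young's inequality bounds the $L^2(0,T)$ norm by $T\|f\|_{L^2(0,T;H^2(A))}$. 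That constant depends on $T$, which is the main obstacle.

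To obtain a $T$-independent constant, I would use the strict positivity $A\ge\lambda_1>0$ instead of contractivity. The operator norm of $e^{-sA}$ on $L^2_\sigma$ is at most $e^{-\lambda_1 s}$. Young's inequality with the kernel $e^{-\lambda_1 s}\in L^1(0,\infty)$ then gives the bound with $C=\lambda_1^{-1}$, uniformly in $T$. If instead the intended statement is the genuine maximal regularity bound with $f\in L^2(0,T;L^2)$ on the right, the same spectral approach applies. One uses Plancherel in time: extend $f$ by zero to $\mathbb R$ and take the Fourier transform in $t$, so the solution operator becomes multiplication by $\lambda(i\xi+\lambda)^{-1}$. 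This multiplier is bounded by $1$ for all $\lambda\ge0$, so the Duhamel term is bounded in $L^2(0,\infty;H^2(A))$ by $\|f\|_{L^2(0,\infty;L^2)}$. This again gives a $T$-independent constant and covers the stated inequality a fortiori.
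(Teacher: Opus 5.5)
Your proof is correct, but it takes a different route from the paper's. The paper gives no argument of its own and defers to Theorems~1.3 and~1.4 of \cite{Iw-2018}, which are Littlewood--Paley estimates. There one localizes with $\phi_j(\sqrt{A})$ and uses the blockwise bound $\|\phi_j(\sqrt{A})\,A e^{-sA}\|_{L^2\to L^2}\le C2^{2j}e^{-cs2^{2j}}$. One then applies Young's inequality in time on each block; the $L^1(0,\infty)$-norm of $2^{2j}e^{-cs2^{2j}}$ does not depend on $j$, and this is where the $T$-independence comes from. Finally one sums in $\ell^2(\mathbb{Z})$ using $\|Av\|_{L^2}^2\simeq\sum_j\|\phi_j(\sqrt{A})Av\|_{L^2}^2$.

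You skip the dyadic decomposition altogether. For the free part you compute exactly with the spectral measure and get the sharp constant $2^{-1/2}$. For the Duhamel term you use Plancherel in $t$ with the multiplier $\lambda(i\xi+\lambda)^{-1}$, or the spectral gap for the version literally printed. Your route is shorter, needs only self-adjointness of $A_2$, and gives explicit constants. The dyadic route is the one that extends to the $L^p$-based spaces $B^s_{p,q}(A)$ and to Chemin--Lerner norms $\widetilde L^r(0,T;B^s_{p,q}(A))$ with $r\neq 2$, where Plancherel in time is unavailable. It also matches the machinery used in the rest of the paper.

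You are right that the second inequality as printed, with $H^2(A)$ on both sides, follows from the spectral gap alone and is not really a maximal-regularity statement. The paper actually applies it in Proposition~\ref{prop:1203-1} (Steps~2 and~4) with $\|f\|_{L^2(0,T;L^2)}$ on the right. Your Plancherel argument is exactly what covers that use.
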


\begin{pf}The proof is similar to those of Theorems~1.3 and 1.4 in \cite{Iw-2018}.
\end{pf}

	\begin{lem}\label{lem:1225-1}
		Let $j \in \mathbb Z$. For $u \in L^2 _\sigma$ with $\Delta u \in L^2$, we consider the linear operator defined by 
		$u \mapsto \psi_j(D) \Delta u$. Then $\psi _j (D) \Delta $ is extended uniquely to a bounded linear operator on $L^2_\sigma$ and 
		\[
		\sup _{j \in \mathbb Z} 2^{-2j} \| \psi_j(D) \Delta  \|_{L^2 \to L^2} < \infty , \quad 
		\int_{\Omega} \Big( \psi_j(D) (-\Delta u)   \Big) \cdot u ~dx \geq 0 .
        \]
	\end{lem}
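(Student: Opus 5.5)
The plan is to reduce everything to the spectral calculus of $A$ by showing that, on the relevant class, $\psi_j(D)\Delta u = -\psi_j(D) A u = -A\psi_j(D)u$. Here $\psi_j(D)$ is understood as $\psi_j(\sqrt{B})$, acting on the non-solenoidal field $\Delta u$ through \eqref{1219-1}.

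\textbf{Step 1: identify $\Delta u$.} Take $u\in L^2_\sigma$ with $\Delta u\in L^2$. Write
\[
\psi_j(D) = \tfrac34\,2^{-2j}B(1+2^{-2j-2}B)^{-1}(1+2^{-2j}B)^{-1}.
\]
Both resolvents $(1+2^{-2j-2}B)^{-1}$ and $(1+2^{-2j}B)^{-1}$ commute with $\mathbb P$ by \eqref{1219-1}. Since $\operatorname{div} u=0$, we have $\Delta u = -\operatorname{rot}\operatorname{rot} u$, and this is solenoidal in the distributional sense. Testing against $v\in\mathcal X_\sigma$ (so $v\in D(A_2)$ after smoothing by $\phi_k(A)$) gives
\[
\langle \Delta u, v\rangle = \langle u, \Delta v\rangle = -\langle u, A v\rangle = -\langle Au, v\rangle \quad\text{in } \mathcal X'_\sigma .
\]
Hence $\psi_j(D)\Delta u = -\psi_j(\sqrt A)Au$ in $\mathcal X'_\sigma$.

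\textbf{Step 2: the operator bound.} The operator $-\psi_j(\sqrt A)A$ is the spectral multiplier
\[
m_j(\lambda) = -\tfrac34\, 2^{-2j}\lambda^4 (1+2^{-2j-2}\lambda^2)^{-1}(1+2^{-2j}\lambda^2)^{-1}.
\]
Scaling $\lambda = 2^{j}\mu$ gives $m_j(\lambda) = -2^{2j}\cdot\tfrac34\,\mu^4(1+\mu^2/4)^{-1}(1+\mu^2)^{-1}$. This is bounded by $C 2^{2j}$ for $\mu\ge 1$, but it grows like $\mu^4$ only for small $\mu$, where it is at most $2^{2j}\mu^4\cdot\frac34\le C2^{2j}$ as well. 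So $\sup_\lambda |m_j(\lambda)| \le 3\cdot 2^{2j}$. By the spectral theorem, $\|\psi_j(D)\Delta u\|_{L^2}\le C2^{2j}\|u\|_{L^2}$ on the dense set $H^2(A)$, and unique extension to $L^2_\sigma$ follows. This gives the uniform bound on $2^{-2j}\|\psi_j(D)\Delta\|_{L^2\to L^2}$.

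\textbf{Step 3: positivity.} We have
\[
\int_\Omega \psi_j(D)(-\Delta u)\cdot u\,dx = \int \lambda^2\psi_j(\lambda)\, d(E(\lambda)u,u).
\]
Since $\psi_j(\lambda) = (1+2^{-2j-2}\lambda^2)^{-1}-(1+2^{-2j}\lambda^2)^{-1}\ge 0$ for $\lambda\ge0$, the integral is nonnegative. This passes to all of $L^2_\sigma$ by continuity.

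\textbf{Main obstacle.} The delicate step is Step 1, namely justifying $\psi_j(\sqrt B)\Delta u = -\psi_j(\sqrt A)Au$ when $u$ is only in $L^2_\sigma$ with $\Delta u\in L^2$, and no boundary condition $\operatorname{rot} u\times\nu=0$ is assumed. The boundary term in Green's formula must vanish because $v$ satisfies the Neumann-type conditions. I would verify this through the identity
\[
\int\Delta u\cdot v = -\int \operatorname{rot} u\cdot\operatorname{rot} v + \text{boundary terms},
\]
using $v\cdot\nu=0$ and $\operatorname{rot} v\times\nu=0$, first for smooth $u$ and then by density.
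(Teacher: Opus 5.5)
Your Steps 2 and 3 are correct as statements about the spectral multiplier $-\psi_j(\sqrt{A})A$ (indeed $0\le \lambda^2\psi_j(\lambda)\le 3\cdot 2^{2j}$). The gap is Step 1, which you rightly single out as the crux: it is false, and the proposed justification does not work. Using $\Delta=\nabla\mathrm{div}-\mathrm{rot}\,\mathrm{rot}$, $\mathrm{div}\,u=0$ and $u\cdot\nu=0$, Green's formula gives, for $v\in D(A_2)$,
\[
\int_\Omega \Delta u\cdot v\,dx-\int_\Omega u\cdot\Delta v\,dx=-\int_{\partial\Omega}(\nu\times\mathrm{rot}\,u)\cdot v\,dS .
\]
The conditions $v\cdot\nu=0$ and $\mathrm{rot}\,v\times\nu=0$ only remove the divergence terms and the term $\int_{\partial\Omega}(\nu\times\mathrm{rot}\,v)\cdot u\,dS$. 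The surviving term pairs two tangential fields, and it vanishes only if $\mathrm{rot}\,u\times\nu=0$, which is precisely the condition you say is not assumed.

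Concretely, let $\Omega$ be the unit ball (simply connected, so $A$ is injective) and $u=(x_2,-x_1,0)$. Then $u\in L^2_\sigma$ and $\Delta u=0$, so $\psi_j(D)\Delta u=0$. But $-\psi_j(\sqrt{A})Au\neq 0$, because $\lambda^2\psi_j(\lambda)>0$ for $\lambda>0$. Here $\mathrm{rot}\,u=(0,0,-2)$ and $\nu\times\mathrm{rot}\,u=-2u\neq0$ on $\partial\Omega$. There is a second problem as well: an identity in $\mathcal X'_\sigma$ only sees $\mathbb P\Delta u$. To pass to the $L^2$ field $\psi_j(\sqrt B)\Delta u$ you would also need $\Delta u\cdot\nu=0$ on $\partial\Omega$, and $\mathrm{div}\,\Delta u=0$ alone does not give this.

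What can be proved is what the paper proves. Verify $\psi_j(D)(-\Delta u)=\psi_j(\sqrt A)Au$ only for $u\in C^\infty_{0,\sigma}$. There it is immediate: such $u$ vanish near $\partial\Omega$, so they lie in $D(A_2)$ with $-\Delta u=Au\in L^2_\sigma$, and then \eqref{1219-1} applies. On this class your Steps 2 and 3 give $\|\psi_j(D)Au\|_{L^2}\le C2^{2j}\|u\|_{L^2}$ and $\int_\Omega(\psi_j(D)Au)\cdot u\,dx=\|\psi_j(D)^{1/2}A^{1/2}u\|_{L^2}^2\ge0$, and density finishes the argument. That is exactly the paper's proof.

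Step 1 should be dropped rather than repaired. The example shows that $u\mapsto\psi_j(D)\Delta u$ is not even $L^2$-bounded on the whole class $\{u\in L^2_\sigma:\Delta u\in L^2\}$. If it were, continuity from the dense subspace $C^\infty_{0,\sigma}$, on which it equals $-\psi_j(\sqrt A)A$, would force equality at $u=(x_2,-x_1,0)$ as well. So the lemma has to be read as a statement about the density extension. That extension coincides with $\psi_j(D)\Delta u$ for $u\in D(A)$, but not for general $u$ with $\Delta u\in L^2$.
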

	
	\begin{pf}
For every $u \in C_{0,\sigma}^\infty$, the identity $\psi_j(D) (-\Delta) u = \psi_j(D) Au$ holds. We may write
\[ 
\psi_j(D) A = \frac{3}{4} (2^{-2j} A)^2 (1 + 2^{-2j-2} A)^{-1} (1 + 2^{-2j} A)^{-1} \cdot 2^{2j},
\]
which implies the estimate
\[ 
\| \psi_j(D) A u \|_{L^2} \leq C 2^{2j} \| u \|_{L^2}.
\]
This boundedness allows us to extend $\psi_j(D) A$ uniquely to a bounded linear operator on $L^2_\sigma$, since $C_{0,\sigma}^\infty$ is dense in $L^2_\sigma$.

Now, suppose $u \in L^2_\sigma$ and let $\{ u_n \} \subset C_{0,\sigma}^\infty$ be a sequence such that $u_n \to u$ in $L^2$ as $n \to \infty$. For each $n$, it follows from the self-adjointness of the operator $\psi_j(D)$ and the properties of $A$ that
\[ 
\int_{\Omega} \left( \psi_j(D) A u_n \right) \cdot u_n \, dx = \int_{\Omega} | \psi_j(D)^{\frac{1}{2}} A^{\frac{1}{2}} u_n |^2 \, dx \geq 0.
\]
By taking the limit as $n \to \infty$, we obtain
\[ 
\int_{\Omega} \left( \psi_j(D) A u \right) \cdot u \, dx \geq 0,
\]
which completes the proof. 
	\end{pf}
	
	\section{Solutions with $\varepsilon \Delta$ and Regularity} 
	
The goal of this section is to construct solutions to the Navier-Stokes equations with viscosity $\varepsilon \Delta$ for each parameter $\varepsilon > 0$ in the space $C([0, T]; H^1(A))$, supplemented with additional regularity for $\partial_t u$ and $\nabla^4 u$. To this end, we provide two propositions: Proposition~\ref{prop:1128-1} ensures the existence of a local-in-time solution, while Proposition~\ref{prop:1203-1} establishes the higher regularity required to justify the $L^2$ energy estimates with localized spectra, which are essential for the proof of the main theorem.

	\begin{prop}\label{prop:1128-1}
		Let $\varepsilon > 0$ be given. Suppose that the initial data $u_0$ belongs to the $H^1(A)$ Sobolev space associated with the Stokes operator.
		Then there exist a time $T>0$ and a unique solution $u \in C([0,T] , H^1 (A))$ to the following integral equation:
		\begin{equation}\label{eq:IE}
			u(t) = e^{-t \varepsilon A} u_0 - \int _0^t e^{-(t-\tau) \varepsilon A} \mathbb P \Big( (u(\tau) \cdot \nabla) u(\tau) \Big) d\tau , 
			\quad t \in [0,T].
		\end{equation}
	\end{prop}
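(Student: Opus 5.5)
We prove Proposition~\ref{prop:1128-1} by the contraction mapping principle. The working space is
\[
X_T := C([0,T]; H^1(A)) \cap L^2(0,T; H^2(A)),
\qquad
\|u\|_{X_T} := \sup_{0\le t\le T}\|u(t)\|_{H^1(A)} + \|u\|_{L^2(0,T;H^2(A))},
\]
and we consider the closed ball of radius $2M$ around the free solution $e^{-t\varepsilon A}u_0$. Here $M$ is a bound for the free part in $X_T$. By Proposition~\ref{prop:1222-7}, after the rescaling $t\mapsto \varepsilon t$, we have $\|e^{-t\varepsilon A}u_0\|_{L^2(0,T;H^2(A))}\le C\varepsilon^{-1/2}\|u_0\|_{H^1(A)}$. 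Strong continuity of $e^{-t\varepsilon A}$ on $H^1(A)$ follows from the spectral theorem.

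\textbf{Step 1: the nonlinear term.} Write $N(u,v):=\mathbb P((u\cdot\nabla)v)$. For the Duhamel term we use two estimates, both with the semigroup acting on $L^2_\sigma$:
\[
\Big\|\int_0^t e^{-(t-\tau)\varepsilon A}f\,d\tau\Big\|_{H^1(A)}\le C\int_0^t (\varepsilon(t-\tau))^{-1/2}\|f(\tau)\|_{L^2}\,d\tau,
\]
\[
\Big\|\int_0^t e^{-(t-\tau)\varepsilon A}f\,d\tau\Big\|_{L^2(0,T;H^2(A))}\le C\varepsilon^{-1}\|f\|_{L^2(0,T;L^2)}.
\]
The first follows from Proposition~\ref{prop:1222-1} with $s=1/2$ in $L^2$. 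The second is the $L^2$ maximal regularity, obtained by the same argument as Proposition~\ref{prop:1222-7}, which in $L^2$ is immediate from the spectral theorem and Plancherel in time. It therefore suffices to bound $\|N(u,v)\|_{L^2}$. By Lemma~\ref{lem:1222-2}, Hölder's inequality, Lemma~\ref{lem:1222-3} and Lemma~\ref{lem:1222-4}(1),
\[
\|N(u,v)\|_{L^2}\le C\|u\|_{L^6}\|\nabla v\|_{L^3}\le C\|u\|_{H^1(A)}\|v\|_{H^1(A)}^{1/2}\|v\|_{H^2(A)}^{1/2}.
\]
For the last factor we use the interpolation $\|\nabla v\|_{L^3}\le C\|A^{3/4}v\|_{L^2}\le C\|v\|_{H^1(A)}^{1/2}\|v\|_{H^2(A)}^{1/2}$. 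Consequently,
\[
\|N(u,v)\|_{L^2(0,T;L^2)}\le C T^{1/4}\|u\|_{X_T}\|v\|_{X_T}^{1/2}\|v\|_{L^2H^2}^{1/2}
\]
by Hölder's inequality in time. In the $C(H^1)$ estimate we use instead $\int_0^t (t-\tau)^{-1/2}\|v(\tau)\|_{H^2}^{1/2}\,d\tau \le C T^{1/4}\|v\|_{L^2H^2}^{1/2}$. Both Duhamel bounds thus carry a factor $T^{1/4}$, with constants depending on $\varepsilon$.

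\textbf{Step 2: contraction.} Bilinearity gives $N(u,u)-N(v,v)=N(u-v,u)+N(v,u-v)$. Step~1 then yields
\[
\|\Phi(u)-\Phi(v)\|_{X_T}\le C_\varepsilon T^{1/4}(\|u\|_{X_T}+\|v\|_{X_T})\|u-v\|_{X_T}
\]
for the map $\Phi$ defined by the right-hand side of \eqref{eq:IE}. This is the main obstacle: because of the $T^{1/4}$ factor, smallness comes only from $T$, so $T$ must depend on $M$. Here $M$ is a fixed finite bound on the free part and does not shrink with $T$, but the factor $T^{1/4}$ alone suffices. Choosing $T$ small gives a contraction on the ball, and hence a solution in $X_T$. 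The continuity in time of the Duhamel term in $H^1(A)$ is standard, by dominated convergence using the same kernel bound.

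\textbf{Step 3: uniqueness.} Uniqueness in $C([0,T];H^1(A))$ alone, without the $L^2H^2$ control, requires a separate argument. Given two solutions in $C(H^1(A))$, we first show that any such solution lies in $L^2(0,T';H^2(A))$. For this we use the smoothing $\|A^{1/2}e^{-t\varepsilon A}\|\lesssim t^{-1/2}$ and the bound $\|N(u,u)\|_{L^{3/2}}\le C\|u\|_{H^1}^2$. Proposition~\ref{prop:1222-1} then gives $L^{3/2}\to L^2$ smoothing of order $t^{-1/4}$, so the solution gains regularity: $u\in L^4(H^{3/2})$ on short intervals, which suffices to rerun the difference estimate. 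Alternatively, one can estimate the difference $w=u-v$ directly in $L^2$ by an energy argument, using $\|(w\cdot\nabla)u\|_{L^{6/5}}\le \|w\|_{L^2}\|\nabla u\|_{L^3}$ together with the $H^1$ dissipation $\varepsilon\|w\|_{H^1(A)}^2$ and Gronwall's inequality. We would try this energy route first, since it avoids the bootstrap.
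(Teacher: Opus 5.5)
Your existence argument is correct, but it takes a different route from the paper.

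\textbf{Comparison with the paper.} The paper never leaves $C([0,T];H^1(A))$. It estimates the nonlinearity in $L^{3/2}$, namely $\|\mathbb P((u\cdot\nabla)v)\|_{L^{3/2}}\le C\|u\|_{L^6}\|\nabla v\|_{L^2}\le C\|u\|_{H^1(A)}\|v\|_{H^1(A)}$. It then combines the $L^{3/2}\to L^2$ bound (order $t^{-1/4}$) with the $A^{1/2}$-smoothing (order $t^{-1/2}$) from Proposition~\ref{prop:1222-1}. This gives the integrable kernel $(\varepsilon(t-\tau))^{-3/4}$ and a contraction factor $C\varepsilon^{-3/4}T^{1/4}M_0$ in the sup-in-time $H^1(A)$ norm alone.

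You estimate the nonlinearity in $L^2$ instead, so you need $\nabla v\in L^3$. That forces the extra $L^2(0,T;H^2(A))$ component, the interpolation $\|A^{3/4}v\|_{L^2}\le\|v\|_{H^1(A)}^{1/2}\|v\|_{H^2(A)}^{1/2}$, and $L^2$ maximal regularity. These estimates are all fine; in particular the $T^{1/4}$ comes correctly from $(t-\tau)^{-1/2}\in L^{4/3}$ against $\|v\|_{H^2(A)}^{1/2}\in L^4$.

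What each route buys:
\begin{itemize}
\item Your route is purely Hilbertian, with no $L^{3/2}$ theory for $\mathbb P$ or the semigroup. It also yields $u\in L^2(0,T;H^2(A))$ for free, which the paper only obtains afterwards in Proposition~\ref{prop:1203-1}.
\item The price is uniqueness. Your contraction only gives uniqueness in a ball of $X_T$.
\item In the paper, uniqueness in all of $C([0,T];H^1(A))$ is immediate. The same difference estimate applies to any two $C(H^1(A))$ solutions on a short interval $[0,t_0]$, with $M=\sup_t\|u\|_{H^1(A)}+\sup_t\|v\|_{H^1(A)}$, and one iterates.
\end{itemize}

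\textbf{Your Step~3, bootstrap route.} The ingredients you list are exactly the paper's estimate. They give the kernel $(t-\tau)^{-3/4}$ for the $H^1(A)$ norm only. For $A^{3/4}$ the kernel becomes $(t-\tau)^{-1}$, which is not integrable, so $u\in L^4(0,T';H^{3/2}(A))$ does not follow from them as stated. No regularity gain is needed, though: that estimate already closes the difference argument directly in $C([0,t_0];H^1(A))$.

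\textbf{Your Step~3, energy route.} This is fine if you read it as a weak--strong argument. Take $u$ to be the solution constructed in $X_T$, so that $\|\nabla u\|_{L^3}^2\le C\|u\|_{H^1(A)}\|u\|_{H^2(A)}\in L^1(0,T)$, and $v$ an arbitrary $C([0,T];H^1(A))$ solution. For two arbitrary $C(H^1(A))$ solutions, $\nabla u\in L^2(0,T;L^3)$ is not available. In that case split instead as
\[
\Big|\int_\Omega ((w\cdot\nabla)u)\cdot w\,dx\Big|\le\|w\|_{L^4}^2\|\nabla u\|_{L^2}\le C\|w\|_{L^2}^{1/2}\|w\|_{H^1(A)}^{3/2}\|u\|_{H^1(A)},
\]
absorb $\|w\|_{H^1(A)}^{3/2}$ into the dissipation $\varepsilon\|w\|_{H^1(A)}^2$, and apply Gronwall with $\|u\|_{H^1(A)}^4\in L^\infty(0,T)$.
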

	\begin{pf}
		We begin by establishing the existence of a local solution $u$. 
		We first choose a constant $M_0$ satisfying
		\[
		\| u_0 \|_{ H^1 (A)} \leq M_0. 
		\]
		Let $T > 0$ be a time to be specified later.
		We apply the $\text{Banach fixed point theorem}$ to the complete metric space $X_T$, which is defined as
		\[
		X_T := \left\{ u \in C([0,T], H^1 (A))\, \middle| \, \| u \|_{X_T} \leq 2M_0 \right\},
		\]
		equipped with the norm
		\[
		\| u \|_{X_T} := \sup _{ t \in [0,T]} \| u(t) \|_{H^1 (A)}, 
		\]
		and the metric
		\[
		d(u,v) := \| u-v \|_{X_T}.
		\]
		
		It is straightforward to see that 
		\[
		\| e^{-t\varepsilon A} u_0 \|_{X_T} \leq \| u_0 \|_{H^1 (A)} \leq M_0. 
		\]
		We apply the smoothing property and the $L^{3/2} - L^2$ estimate  in Proposition~\ref{prop:1222-1} 
		for the semigroup $e^{-(t-\tau)\varepsilon A}$, 
		and subsequently use the boundedness of the projection $\mathbb P$ 
		on $L^{3/2}$ from Lemma~\ref{lem:1222-2}  and the H\"older inequality. 
		The following estimate holds:
		\begin{equation}\label{1128-4}
			\begin{split}
				\Big\| \int_0^t e^{-(t-\tau )\varepsilon A} \mathbb P \big( (u \cdot \nabla ) u \big) d\tau
				\Big\| _{H^1(A)}
				\leq 
				& 
				C \int _0^t \Big( \varepsilon(t-\tau) \Big) ^{-\frac{1}{2}-\frac{3}{2}(\frac{2}{3}-\frac{1}{2})} 
				\| \mathbb P \big( (u \cdot \nabla ) u \big) \|_{L^\frac{3}{2}} d\tau 
				\\
				\leq & 
				C \varepsilon^{-\frac{3}{4} } \int _0^t (t-\tau) ^{-\frac{3}{4} } \| u \|_{L^6} \| \nabla u \|_{L^2} d\tau .
			\end{split}
		\end{equation}
		The Sobolev embedding $H^1 (A) \hookrightarrow L^6$ from Lemma~\ref{lem:1222-3} and the norm equivalence 
		$\| \nabla u \|_{L^2} \simeq \| u \|_{H^1(A)}$ in Lemma~\ref{lem:1222-4} yield the following bound:
		\begin{equation}\label{1128-5}
			\begin{split}
				\Big\| \int_0^t e^{-(t-\tau )\varepsilon A} \mathbb P \big( (u \cdot \nabla ) u \big) d\tau
				\Big\| _{H^1(A)}
				\leq & 
				C \varepsilon^{-\frac{3}{4} } T^{\frac{1}{4}} \| u \|_{X_T} ^2 
				\leq C \varepsilon^{-\frac{3}{4} } T^{\frac{1}{4}} M_0^2 ,
			\end{split}
		\end{equation}
		for any $u \in X_T$. Analogously, we obtain the difference estimate:
		\begin{equation}\label{1128-1}
			\begin{split}
				&	\Big\| \int_0^t e^{-(t-\tau )\varepsilon A} \mathbb P \big( (u \cdot \nabla ) u \big) d\tau
				- \int_0^t e^{-(t-\tau )\varepsilon A} \mathbb P \big( (v \cdot \nabla ) v \big) d\tau
				\Big\| _{\dot H^1(A)}
				\\
				\leq & 
				C \varepsilon^{-\frac{3}{4} } T^{\frac{1}{4}} 
				(\| u \|_{X_T} + \| v \|_{X_T}) \| u-v \|_{X_T}
				\\
				\leq & C \varepsilon^{-\frac{3}{4} } T^{\frac{1}{4}} M_0 \| u-v \|_{X_T},
			\end{split}
		\end{equation}
		for any $u, v \in X_T$. 
		By these inequalities and by choosing $T$ such that $C \varepsilon^{-\frac{3}{4}} T^{\frac{1}{4}} \ll 1$, 
		we ensure that the map 
		\[
		X_T \ni u \quad \mapsto \quad 
		e^{-t \varepsilon A} u_0 - \int _0 ^t e^{-(t-\tau) \varepsilon A} \mathbb P \Big( (u \cdot \nabla ) u   \Big) ~d\tau 
		\in X_T
		\]
		is well-defined and contractive. We then find a unique fixed point $u$ in $X_T$, which is the desired solution.

		We now prove the uniqueness of the solution in the space $C([0,T] ,H^1 (A))$. 
		Suppose that $u$ and $v$ are two solutions of the integral equation 
		with the same initial data $u_0$ in $ H^1(A)$. We define the quantity 
		\[ 
		M := \sup _{t \in [0,T]} (\| u \|_{X_T} + \| v \|_{X_T}). 
		\] 
		Let $0 < t_0 < T$ and consider the difference $u-v$ on the time interval $[0,t_0]$. 
		A similar argument to that used to derive inequality \eqref{1128-1} implies that 
		\[
		\| u - v\|_{X_{t_0}} 
		\leq C \varepsilon ^{-\frac{3}{4}} t_0 ^{\frac{1}{4}} M \| u-v \|_{X_{t_0}}.
		\]
		By choosing $t_0$ sufficiently small such that $C \varepsilon ^{-\frac{3}{4}}t_0 ^{\frac{1}{4}} < 1$, the above inequality yields 
		$\| u - v\|_{X_{t_0}} = 0$, which means $u = v$ on the time interval $[0,t_0]$. 
		Due to the time continuity, we have $u(t_0) = v(t_0)$ in $H^1 (A)$. 
		Treating $u(t_0)$ as the new initial data, we can apply the same argument iteratively on the time interval $[t_0 , 2t_0]$, provided $2t_0 \leq T$. 
		By repeating this procedure finitely many times, the uniqueness is established throughout the entire interval $[0,T]$.
	\end{pf}

\vskip3mm 
	
	We will need the regularity of the solutions to justify the energy estimate in the proof of our theorem. 
	An analogous argument to Giga and Miyakawa~\cite{GiMi-1985} combined with maximal regularity ensures the following regularity.

	\begin{prop}\label{prop:1203-1}
		Let $u \in C([0,T], H^1 (A))$ be the unique local solution to the integral equation \eqref{eq:IE} constructed in the previous proposition. 
		Then, 
		\[ u \in L^2 (0,T ; H^3 (\Omega)) \cap L^2 (0,T; H^2(A)), 
		\]
		and for any $t_0$ such that $0 < t_0 < T$, the solution $u$ satisfies the following higher regularity estimates:
		\[
		u \in L^2(t_0,T; H^4(\Omega)) \quad \text{and} \quad 
		\partial _t u \in L^2(t_0,T; H^2(A)).
		\]
		Here, $H^2 (A)$ denotes the Sobolev space associated with the Neumann Stokes operator $A$, and $H^4 (\Omega)$ is the standard Sobolev space equipped with the domain $\Omega$ (without boundary conditions).
	\end{prop}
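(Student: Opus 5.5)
The plan is a bootstrap in the mild formulation \eqref{eq:IE}, using the maximal regularity of Proposition~\ref{prop:1222-7} (rescaled by $\varepsilon$, with constants depending on $\varepsilon$) together with the elliptic estimates of Lemmas~\ref{lem:1222-10} and \ref{lem:1222-9}.

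\emph{Step 1: $u\in L^2(0,T;H^2(A))$.} Split $u=e^{-t\varepsilon A}u_0-N(u)$, where $N(u)$ denotes the Duhamel term. The first part lies in $L^2(0,T;H^2(A))$ by the first estimate of Proposition~\ref{prop:1222-7}, since $u_0\in H^1(A)$. For $N(u)$ I use the $L^2$-maximal regularity in the form $\|\int_0^t e^{-(t-\tau)\varepsilon A}f\,d\tau\|_{L^2(H^2(A))}\le C_\varepsilon\|f\|_{L^2(0,T;L^2)}$ with $f=\mathbb P((u\cdot\nabla)u)$. This is the standard $L^2$ version, obtained from the spectral theorem in the same way as Proposition~\ref{prop:1222-7}. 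The nonlinear term is only borderline in $L^2$: $\|(u\cdot\nabla)u\|_{L^2}\le\|u\|_{L^6}\|\nabla u\|_{L^3}\le C\|u\|_{H^1(A)}^{3/2}\|u\|_{H^2(A)}^{1/2}$ by Lemma~\ref{lem:1222-3} and interpolation. I therefore argue a priori on the finite-dimensional truncations $\sum_{j\le J}\phi_j(D)$ (or on $[\delta,T]$, where the semigroup smoothing from Proposition~\ref{prop:1222-1} already gives $H^2$) and absorb the $H^2$ factor via Young's inequality. This yields $\|u\|_{L^2H^2(A)}^2\le C(\varepsilon,M_0,T)$, and Fatou's lemma removes the truncation. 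Since $H^2(A)\subset D(B)$, Lemma~\ref{lem:1222-8}(1) gives $\nabla^2u\in L^2(0,T;L^2)$.

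\emph{Step 2: $u\in L^2(0,T;H^3(\Omega))$.} With Step 1 in hand, $(u\cdot\nabla)u\in L^2(0,T;H^1)$, because $\|\nabla((u\cdot\nabla)u)\|_{L^2}\le C\|\nabla u\|_{L^4}^2+C\|u\|_{L^\infty}\|\nabla^2u\|_{L^2}\le C\|u\|_{H^1}^{1/2}\|u\|_{H^2}^{3/2}$. Then $\mathbb P((u\cdot\nabla)u)\in L^2(H^1)$ by Lemma~\ref{lem:1222-9}. Writing the equation as $\varepsilon Bu=-\partial_tu-\mathbb P((u\cdot\nabla)u)$ and using maximal regularity at the level of $H^1$ (the statement of Proposition~\ref{prop:1222-7} shifted by one derivative), I get $Bu\in L^2(H^1)$. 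Lemma~\ref{lem:1222-10} with $m=1$ then gives $u\in L^2(0,T;H^3)$.

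\emph{Step 3: regularity on $(t_0,T)$.} Following Giga--Miyakawa, I multiply by a cutoff $\eta(t)$ vanishing near $0$ and consider $w=\eta u$. Then $w$ solves the same linear equation from zero data with forcing $\eta'u-\eta\mathbb P((u\cdot\nabla)u)$. Steps 1–2 give $u\in L^\infty(t_1,T;H^2)$ for a.e.\ starting time $t_1$, so the forcing lies in $L^2(H^2)$: one has $\|\nabla^2((u\cdot\nabla)u)\|_{L^2}\lesssim\|u\|_{H^2}\|u\|_{H^3}$, and Lemma~\ref{lem:1222-9} with $m=2$ handles the projection. The second estimate of Proposition~\ref{prop:1222-7} then gives $Bw\in L^2(H^2(A))$, hence $u\in L^2(t_0,T;H^4)$ by Lemma~\ref{lem:1222-10} with $m=2$. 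The equation itself gives $\partial_tu=-\varepsilon Au-\mathbb P((u\cdot\nabla)u)\in L^2(t_0,T;H^2(A))$.

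\emph{Main obstacle.} The hardest point is checking that $\mathbb P((u\cdot\nabla)u)$ really belongs to $H^2(A)$, not merely to $H^2(\Omega)$. This requires the boundary conditions $v\cdot\nu=0$ and $\operatorname{rot}v\times\nu=0$, and the nonlinear term does not satisfy them in general. To get around this, I would apply maximal regularity only for $A^{1/2}$-level quantities, using Lemma~\ref{lem:1222-8}(4) and the commutation \eqref{1219-1} to move $\mathbb P$ through $B^{-1/2}$. Equivalently, I would estimate $\varepsilon Au=-\partial_t u-\mathbb P(\cdots)$ in $H^1(A)\simeq H^1_\sigma$ (Lemma~\ref{lem:1222-4}), which needs no boundary condition beyond tangency, and then gain the last derivative from Lemma~\ref{lem:1222-10}.
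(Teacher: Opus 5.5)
Your Step~2 does not go through. Step~1 gives only $u\in L^\infty(0,T;H^1)\cap L^2(0,T;H^2)$, and with that your bound $\|\nabla((u\cdot\nabla)u)\|_{L^2}\lesssim\|u\|_{H^1}^{1/2}\|u\|_{H^2}^{3/2}$ puts the nonlinearity in $L^{4/3}(0,T;H^1)$, not in $L^2(0,T;H^1)$. Moreover, maximal regularity one derivative higher controls the free part only if $u_0\in H^2(A)$, since $\int_0^T\|A^{3/2}e^{-t\varepsilon A}u_0\|_{L^2}^2\,dt\simeq\|Au_0\|_{L^2}^2$ (constants depending on $\varepsilon,T$). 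This cannot be patched for data merely in $H^1(A)$. If $u\in L^2(0,T;H^3)$, the equation gives $\partial_tu\in L^2(0,T;H^1)$, hence $u\in C([0,T];H^2)$ and $u_0\in H^2$. So the claim $u\in L^2(0,T;H^3(\Omega))$ on the whole interval genuinely needs more regular data. The paper's proof uses this tacitly (its Step~2 relies on $u_0$ being ``sufficiently regular''), which is harmless in Section~4, where $u_0\in B^{\frac52}_{2,1}(A)\subset H^2(A)$. With $u_0\in H^2(A)$, your Step~2 closes by interpolating $\|u\|_{H^2}\lesssim\|u\|_{H^1}^{1/2}\|u\|_{H^3}^{1/2}$ and absorbing with Young's inequality, and it then also supplies $\partial_tu\in L^2(0,T;H^1(A))$. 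For the claims on $(t_0,T)$ you can instead restart at a time $t_1<t_0$ with $u(t_1)\in H^2(A)$, which Step~1 provides for a.e.\ $t_1$. (A minor point: applying $\sum_{j\le J}\phi_j(D)$ to $u$ in Step~1 does not close, because the nonlinearity still contains the untruncated $u$; your $[\delta,T]$ variant is the one that works.)

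The more essential gap is in Step~3. The statement itself gives $\partial_tu\in H^2(A)$, so your claim $Bw\in L^2(H^2(A))$ would force $\mathbb P((u\cdot\nabla)u)\in H^2(A)$, i.e.\ $\rot((u\cdot\nabla)u)\times\nu=0$ on $\partial\Omega$. But using $u\cdot\nu=0$ and $\rot u\times\nu=0$, one computes that on $\partial\Omega$ this quantity equals $2(\rot u\cdot\nu)\,((u\cdot\nabla)\nu)\times\nu$, which is nonzero in general on curved parts of the boundary. So the $H^2(A)$-level maximal regularity conclusion is false in general, not merely hard to justify. Your workaround (estimate $\varepsilon Au$ in $H^1(A)$, then apply Lemma~\ref{lem:1222-10}) stops at $H^3$, and moving $\mathbb P$ through $B^{-1/2}$ does not cure the compatibility defect.

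The missing idea is to differentiate the equation in time. The paper justifies this through the localizations $\phi_j(D)$. It then writes the Duhamel formula for $v=\partial_tu$, which solves $\partial_tv+\varepsilon Av=-\mathbb P((v\cdot\nabla)u+(u\cdot\nabla)v)$, starting from a time $t_0$ with $\partial_tu(t_0)\in H^1(A)$. Maximal regularity at the $L^2$ level (Proposition~\ref{prop:1222-7}) needs this forcing only in $L^2(t_0,T;L^2)$, with no boundary condition. The two forcing terms are controlled by $\|(v\cdot\nabla)u\|_{L^2}\le\|v\|_{L^6}\|\nabla u\|_{L^3}$ and $\|(u\cdot\nabla)v\|_{L^2}\le\|u\|_{L^\infty}\|\nabla v\|_{L^2}$, using $\partial_tu\in L^2(t_0,T;H^1(A))$ and the bounds on $u$. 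This yields $A\partial_tu\in L^2(t_0,T;L^2)$, i.e.\ $\partial_tu\in L^2(t_0,T;H^2(A))$ with its boundary conditions built in. Only afterwards does $u\in L^2(t_0,T;H^4(\Omega))$ follow, from Lemma~\ref{lem:1222-10} with $m=2$ applied to $\varepsilon\Delta u=\partial_tu+\mathbb P((u\cdot\nabla)u)\in H^2(\Omega)$ (using Lemma~\ref{lem:1222-9}). That step imposes no compatibility condition on $\Delta u$. In short, the order of your Step~3 has to be reversed.
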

	
	\begin{pf}
		Step 1. 
		We first prove that the solution $u$ belongs to $L^\infty _{\rm loc} ((0,T] , H^s (A))$ for any $s < 2$. 
		If $1 < s_1 < 3/2$, a similar argument to those used in deriving \eqref{1128-4} and \eqref{1128-5} shows that 
		\[
		\begin{split}
			\| u(t) \|_{H^{s_1}(A)} 
			\leq 
			&Ct^{-\frac{s_1-1}{2}} \| u_0 \|_{H^1 (A)} 
			+ C \int _0^t \Big( \varepsilon (t-\tau)\Big)  ^{-\frac{s_1}{2} - \frac{3}{2}(\frac{2}{3}-\frac{1}{2})} 
			\| \mathbb P (u \cdot \nabla )u \|_{L^{\frac{3}{2}}} ~ d\tau 
			\\
			\leq 
			& Ct^{-\frac{s_1-1}{2}} \| u_0 \|_{H^1 (A)} 
			+ C \varepsilon ^{-\frac{s_1}{2}- \frac{1}{4}} t ^{-\frac{s_1}{2}+ \frac{3}{4}} \sup_{\tau \in [0,t]} \| u(\tau) \|_{H^1(A)}^2 ,
		\end{split}
		\]
		which implies that $u \in L^\infty _{\rm loc} ((0,T] , H^{s_1} (A))$. 
		
		Next, we choose indices $s_2, p_1 , p_2$ satisfying the conditions
		\[ \frac{3}{2} \leq s_2 < 2, , \quad 6 < p_1 < \infty, \quad 2 < p_2 < 3, \quad \frac{1}{2} = \frac{1}{p_1} + \frac{1}{p_2}, 
		\]and it holds that 
		\[
		\begin{split}
			\| u(t) \|_{ H^{s_2}(A)} 
			\leq 
			&Ct^{-\frac{s_2-1}{2}} \| u_0 \|_{ H^1 (A)} 
			+ C \int _0^t \Big( \varepsilon (t-\tau)\Big)  ^{-\frac{s_2}{2} }
			\| \mathbb P (u \cdot \nabla )u \|_{L^{2}} ~ d\tau 
			\\
			\leq 
			&Ct^{-\frac{s_2-1}{2}} \| u_0 \|_{ H^1 (A)} 
			+ C \varepsilon ^{-\frac{s_2}{2} }  t^{1-\frac{s_2}{2} }
			\sup _{\tau \in [0,t]} \| u \|_{L^{p_1}} \| \nabla u \|_{L^{p_2}}.
		\end{split}
		\]
		Now, we select the regularity index $s_1$ such that 
		\[
		\max \bigg\{ 3 \Big( \frac{1}{2} - \frac{1}{p_1} \Big) , 3 \Big( \frac{1}{2} - \frac{1}{p_2} \Big) + 1 \bigg\} 
		< s_1   < \frac{3}{2}. 
		\]
		The Sobolev embedding  $H^{3(\frac{1}{2}-\frac{1}{p_1})}(A) \hookrightarrow L^{p_1}$ from Lemma~\ref{lem:1222-3} combined with the inclusion $H^{s_1}(A) \hookrightarrow H^{3(\frac{1}{2}-\frac{1}{p_1})}(A)$ (since $3(1/2 - 1/p_1) < s_1$) yields
		\[
		\| u \|_{L^{p_1}} \leq C \| u \|_{H^{s_1}(A)}. 
		\]
		For the second term, by using the $\boldsymbol{\text{Littlewood-Paley decomposition}}$ (or $\boldsymbol{\text{resolution of identity}}$) and the $\boldsymbol{\text{Besov space embedding}}$ $H^{s_1}(A) = B^{s_1}_{2,2} (A) \hookrightarrow B^{1+ 3 (\frac{1}{2}- \frac{1}{p_2})}_{p_2, \infty}(A)$ from Lemma~ \ref{lem:1222-5}, followed by the triangle inequality, we obtain
		\[
		\begin{split}
			\| \nabla u \|_{L^{p_2}} 
			\leq
			& \sum _{j \in \mathbb Z} 2^j \| \phi_j(D) u \|_{L^{p_2}}
			\leq C \sum _{j \in \mathbb Z} 2^{j + 3(\frac{1}{2} - \frac{1}{p_2})j} \| \phi_j(D) u \|_{L^{2}}
			= C \| u \|_{B^{1+3(\frac{1}{2}-\frac{1}{p_2})}_{2,1}(A)}
			\\
			\leq & C \| u \|_{H^{s_1} (A)}. 
		\end{split}
		\]
		Therefore, we conclude that if $3/2 < s_2 < 2$, then
		\[
		\begin{split}
			\| u(t) \|_{ H^{s_2}(A)} 
			\leq 
			&Ct^{-\frac{s_2-1}{2}} \| u_0 \|_{ H^1 (A)} 
			+ C \varepsilon ^{-\frac{s_2}{2} }  t^{1-\frac{s_2}{2} }
			\sup _{\tau \in [0,t]} \| u \|_{ H^{s_1}(A)}^2 < \infty . 
		\end{split}
		\]

		\vskip2mm 
		
		\noindent 
		Step 2. 
		The goal of Step 2 is to prove that the time derivative of the fractional power of the solution, $\partial_t A^{1/2} u(t)$, belongs to $L^2 (0,T; L^2)$.
		We first prove that $\partial _t u \in L^2 (0,T; L^2)$. To this end, we start with maximal regularity estimate Proposition~\ref{prop:1222-7}, which gives
		\[
		\begin{split}
			\| u \|_{L^2 (0,T ; H^2 (A))} 
			\leq
			& C \| u_0 \|_{ H^1 (A)} 
			+ C \Big\| \mathbb P \Big( (u \cdot \nabla ) u \Big)  \Big\|_{L^2 (0,T ; L^2)}
			\\
			\leq 
			& C \| u_0 \|_{ H^1 (A)} 
			+ C T^{\frac{1}{2}}  \sup_{t\in[0,T]} \| u(t)  \|_{ H^{s_1} (A)} ^2 , \quad 
			\text{with } 1 < s_1 < \frac{3}{2}. 
		\end{split}
		\]
		Since $u \in C([0,T], H^{s_1}(A))$ with $s_1 > 1$, the term $\sup_{t\in[0,T]} \| u(t) \|_{ H^{s_1} (A)}$ is finite. 
		Consequently, this bound, combined with the maximal regularity estimate, implies that 
		\[
		\partial_t u \in L^2 (0,T ; L^2). 
		\]
		
		We now show the stronger regularity $\partial _t A^{\frac{1}{2}} u(t) \in L^2 (0,T; L^2)$. 
		We rewrite $A^{1/2}$ on the integral equation using the identity $A^{1/2} = A B^{-\frac{1}{2}}$ (see Lemma~\ref{lem:1222-8}):
		\[
		\begin{split}
			A^{\frac{1}{2}} u(t)
			=& A^{\frac{1}{2}} e^{-t \varepsilon A} u_0 
			- \int _0^t A e^{-(t-\tau) \varepsilon A} \mathbb P B^{-\frac{1}{2}} \Big( (u \cdot \nabla) u\Big) d\tau
			\\
			=& A^{\frac{1}{2}} e^{-t \varepsilon A} u_0 
			- \varepsilon ^{-1} \int _0^t  ( \partial _\tau e^{-(t-\tau) \varepsilon A} ) \mathbb P B^{-\frac{1}{2}} \Big( (u \cdot \nabla) u\Big) d\tau.
		\end{split}
		\]
		Applying integration by parts (with respect to $\tau$ on the integral term), we obtain
		\[
		\begin{split}
			A^{\frac{1}{2}} u(t)
			=& A^{\frac{1}{2}} e^{-t \varepsilon A} u_0 
			- \varepsilon ^{-1} \left[ e^{-(t-\tau) \varepsilon A} \mathbb P B^{-\frac{1}{2} } \Big( (u(\tau) \cdot \nabla) u(\tau) \Big) \right]_{\tau=0}^{\tau=t}
			\\
			& \qquad + \varepsilon ^{-1} \int _0^t e^{-(t-\tau) \varepsilon A} \mathbb P B^{-\frac{1}{2} } \partial _\tau \Big( (u \cdot \nabla) u \Big) d\tau
			\\
			=& A^{\frac{1}{2}} e^{-t \varepsilon A} u_0 
			- \varepsilon ^{-1} \mathbb P B^{-\frac{1}{2} } \Big(  (u(t) \cdot \nabla) u(t) \Big) 
			+ \varepsilon ^{-1}e^{-t \varepsilon A} \mathbb P  B^{-\frac{1}{2}} \Big( (u_0 \cdot \nabla) u_0\Big) 
			\\
			& \qquad 
			+ \varepsilon ^{-1} \int _0^t   e^{-(t-\tau) \varepsilon A}  \mathbb P B^{-\frac{1}{2}} \nabla  \partial _\tau (u \otimes u )  d\tau. 
		\end{split}
		\]
		We now examine the time derivative $\partial _t A^{\frac{1}{2}} u(t)$ for each term.
		It is straightforward to see that the derivatives belong to $L^2 (0,T ; L^2)$ for the first and third terms (semigroup terms), since $u_0$ is sufficiently regular. 
		We directly consider the derivative of the second term:
		\[
		\partial _t \left[ \varepsilon ^{-1} \mathbb P B^{-\frac{1}{2} } \Big( (u(t) \cdot \nabla) u(t) \Big) \right] = \varepsilon^{-1} \mathbb P B^{-\frac{1}{2}} \nabla \cdot \partial_t (u \otimes u)(t).
		\]
		Applying the boundedness of $B^{-\frac{1}{2}} \nabla$ on $L^2$ and the product rule, we have that 
		\begin{equation}\label{1201-1}
			\begin{split}
				\| \partial _t \varepsilon ^{-1} \mathbb P B^{-\frac{1}{2}} \big( (u(t) \cdot \nabla ) u(t) \big) \| _{L^2 (0,T ; L^2)}
				&\leq C \varepsilon ^{-1} \| B^{-\frac{1}{2}} \nabla \|_{L^2 \to L^2} \| \partial _t \big( u \otimes u \big) \| _{L^2 (0,T ; L^2)}
				\\
				&\leq C \varepsilon ^{-1} \| u \|_{L^\infty (0,T ; L^\infty)} \| \partial _t u \|_{L^2 (0,T ; L^2)} < \infty .
			\end{split}
		\end{equation}
		The derivative of the fourth term is given by applying the Leibniz rule for integrals: 
		\[
		\begin{split}
			&
			\partial _t \bigg[ \varepsilon ^{-1} \int _0^t   e^{-(t-\tau) \varepsilon A}  \mathbb P  B^{-\frac{1}{2}}\nabla  \partial _\tau (u \otimes u ) ~d\tau\bigg]
			\\
			= 
			&\varepsilon ^{-1} \mathbb P B^{-\frac{1}{2}}\nabla \partial _t ( u(t) \otimes u(t))
			-  \int_0^t A e^{-(t-\tau) \varepsilon A} \mathbb P B^{-\frac{1}{2}} \nabla \partial _\tau (u \otimes u) ~d\tau .
		\end{split}
		\]
		The first term on the right-hand side is handled in the same manner as in \eqref{1201-1}. 
		The second term on the right-hand side is bounded by maximal regularity estimate Proposition~\ref{prop:1222-7} and the previous estimate \eqref{1201-1} as follows:
		\[
		\begin{split}
			\Big\|  \varepsilon ^{-1} \int_0^t A e^{-(t-\tau) \varepsilon A} \mathbb P B^{-\frac{1}{2}}  \nabla \partial _\tau(u \otimes u) ~d\tau
			\Big\| _{L^2 (0,T ; L^2)}
			\leq & C(\varepsilon) \|  \mathbb PB^{-\frac{1}{2}}  \nabla \partial _\tau (u \otimes u)\|_{L^2 (0,T ; L^2)} 
			\\
			\leq & C (\varepsilon) \| u \|_{L^\infty (0,T ; L^\infty)} \| \partial _t u  \|_{L^2 (0,T ; L^2)} 
			\\
			< & \infty ,
		\end{split}
		\]
		where $C(\varepsilon)$ is a positive constant depending on $\varepsilon$. 
		Consequently, we conclude that $\partial _t A^{\frac{1}{2}} u \in L^2 (0,T ; L^2) $.

		\vskip2mm

		\noindent Step 3. 
		The goal of Step 3 is to establish the spatial regularity of the solution, specifically proving that $u$ belongs to $L^2(0,T; H^3(\Omega))$.
		
		We start from the governing equation, the $\boldsymbol{\text{Neumann Stokes equation}}$, written in terms of the operator $A$:
		\[
		\partial _t u + \varepsilon A u + \mathbb P \big( (u \cdot \nabla) u\big) = 0.
		\]
		We isolate the solution $u$ by applying the inverse operator $A^{-1}$:
		\begin{equation}\label{1203-1}
			u = - A^{-1} \Big( \varepsilon^{-1} \mathbb P \big( (u \cdot \nabla) u \big) + \varepsilon^{-1} \partial _t u \Big).
		\end{equation}
		We then apply the $\boldsymbol{\text{elliptic regularity estimate}}$ (see Lemma~4.4 in Kozono-Yanagisawa~\cite{KoYa-2009}) for the $\boldsymbol{\text{Neumann Stokes operator}}$ to obtain the spatial estimate:
		\[
		\| u\|_{H^3 (\Omega)} 
		\leq C \Big\| \varepsilon^{-1} \mathbb P ((u \cdot \nabla) u) \Big\|_{H^1 (\Omega)} + C \Big\| \varepsilon^{-1} \partial _t u \Big\|_{H^1 (\Omega)}.
		\]
		Here, the Sobolev space $H^k(\Omega)$ is the $\boldsymbol{\text{standard Sobolev space}}$ on the domain $\Omega$, defined by a subspace of $L^2(\Omega)$ where the weak derivatives up to order $k$ belong to $L^2(\Omega)$.
		We utilize the boundedness of the projection $\mathbb P$ on $H^1 (\Omega)$ in Lemma~\ref{lem:1222-9} and the norm equivalence between $H^1(\Omega)$ and $H^1(A)$ to estimate the nonlinear term and the time derivative term, respectively. 
		From Step 2, we note that $A^{1/2} \partial _t u \in L^2 (0,T ; L^2(\Omega))$.
		By taking the $L^2$ norm with respect to the time variable, we obtain the following final estimate:
\[ \begin{split} \| u\|_{L^2 (0,T;H^3 (\Omega))} \leq &C \| \mathbb P ((u \cdot \nabla) u) \|_{L^2 (0,T; H^1 (\Omega))} +C \| A^{1/2} \partial _t u \|_{L^2 (0,T; L^2)} \\ \leq &C T^{\frac{1}{2}} \| u \|_{L^\infty (0,T ; H^s(\Omega))} \| u \|_{L^2(0,T ; H^2(\Omega))} +C \| A^{\frac{1}{2}} \partial _t u \|_{L^2 (0,T; L^2)} <\infty , \end{split} \] where $3/2 < s < 2$.
		The finiteness follows from the regularity results obtained in Step 1 and Step 2.

		\vskip2mm

		\noindent Step 4. 
		We claim $u \in L^2 (0,T ; H^4 (\Omega))$.
		The governing equation is
		\[
		\partial _t u + \varepsilon Au + \mathbb P \big( (u \cdot \nabla) u \big) = 0 \quad \text{ in } L^2 ,
		\text{ for almost every } t \in (0,T) .
		\]
		We apply the spectral restriction operator $\phi_j (D)$ to the equation:
		\[
		\partial _t \phi_j(D) u + \varepsilon A \phi_j(D) u + \phi_j(D) \mathbb P \big( (u \cdot \nabla ) u \big) = 0 \quad
		\text{ in } L^2 , \quad \text{ for almost every } t \in (0,T).
		\]
		
		We first justify the time differentiability of the above equation.
		The second term, $\varepsilon A \phi_j(D) u$, is time differentiable since $A \phi_j(D)$ is a bounded operator on $L^2$ and $\partial _t u \in L^2 (0,T ; L^2)$. Thus $\partial _t (\varepsilon A \phi_j (D) u) \in L^2 (0,T ; L^2)$.
		The regularity $\partial _t A^{\frac{1}{2}} u \in L^2 (0,T ; L^2)$ from Step 2 implies that the time derivative of the nonlinear term is justified:
		\[
		\partial _t \left[ \phi_j (D) \mathbb P \big( (u\cdot \nabla) u \big) \right]
		= \phi_j (D) \mathbb P \big( (\partial _t u\cdot \nabla) u \big) + \phi_j (D) \mathbb P \big( ( u\cdot \nabla) \partial _t u \big).
		\]
		Since all terms on the right-hand side of the restricted equation are time differentiable in $L^2(0,T; L^2)$, we conclude that $\partial _t^2 \phi_j(D) u$ is justified as an element of $L^2 (0,T; L^2)$.
		The twice-differentiated equation is:
		\[
		\partial _t^2 \phi_j(D)u + \varepsilon A \partial _t \phi_j(D) u + \phi_j (D) \mathbb P \big( (\partial _t u\cdot \nabla) u \big) + \phi_j (D) \mathbb P \big( ( u\cdot \nabla) \partial _t u \big)
		= 0.
		\]
		
		This allows us to write $\partial _t \phi_j(D) u$ using the Duhamel formula. For almost every $t_0,t$ with $0 < t_0 < t <T$, we have
		\[
		\begin{split}
			\phi_j(D) \partial _t u(t)
			=& \phi_j(D) e^{-(t-t_0) \varepsilon A} \partial _t u(t_0)
			\\
			& - \phi_j(D) \int_{t_0}^t e^{-(t-\tau) \varepsilon A} \mathbb P
			\big( (\partial _\tau u \cdot \nabla ) u + (u \cdot \nabla) \partial _\tau u\big) ~d\tau
			\quad \text{ in } L^2 .
		\end{split}
		\]
		Since the equality holds for all $j \in \mathbb Z$ and each term without $\phi_j(D)$ is in $L^2$, we may remove $\phi_j(D)$ to obtain an equality for $\partial_t u$.
		\[
		\begin{split}
			\partial _t u(t)
			=& e^{-(t-t_0) \varepsilon A} \partial _t u(t_0)
			- \int_{t_0}^t e^{-(t-\tau) \varepsilon A} \mathbb P
			\big( (\partial _\tau u \cdot \nabla ) u + (u \cdot \nabla) \partial _\tau u\big) ~d\tau
			\quad \text{ in } L^2 .
		\end{split}
		\]
		Furthermore, this implies that $\partial _t u$ is continuous with respect to $t$ in the $L^2$ topology.
		
		Next, we estimate $\|A \partial_t u\|_{L^2(t_0, T; L^2)}$.
		By applying the operator $A$ and taking the $L^2 (t_0,T ; L^2))$ norm on the equation for $\partial_t u$, we obtain the estimate:
		\[
		\begin{split}
			&	\| A \partial _t u \|_{L^2 (t_0 ,T; L^2)}
			\\
			\leq
			&C \| \partial _t u(t_0) \|_{H^1(A)} + C \left\| \int_{t_0}^t A e^{-(t-\tau) \varepsilon A} \mathbb P \big( (\partial _\tau u \cdot \nabla ) u + (u \cdot \nabla) \partial _\tau u\big) ~d\tau \right\|_{L^2(t_0, T; L^2)}
			\\
			\leq
			& C \| \partial _t u(t_0) \|_{H^1(A)} + C \| (\partial _\tau u \cdot \nabla) u + (u \cdot \nabla ) \partial _\tau u \|_{L^2 (t_0,T ; L^2)}
			\\
			\leq
			& C \| \partial _t u(t_0) \|_{H^1(A)} + C \| \partial _\tau u \| _{L^2 (t_0,T ; L^6)} \| \nabla u \|_{L^\infty (t_0,T ; L^3)}
			+ C \| u \|_{L^\infty (t_0,T ; L^\infty)} \| \nabla \partial_\tau u \|_{L^2 (t_0,T; L^2)},
		\end{split}
		\]
		where we have used maximal regularity estimate Proposition~\ref{prop:1222-7}. 
		We choose $t_0 \in (0, T)$ such that $A^{\frac{1}{2}}\partial _t u(t_0) \in L^2$, which is possible since $\partial _t u \in L^2 (0,T; H^1(A))$ by Step 2, 
		and the first term $\| \partial _t u(t_0) \|_{H^1(A)}$ is finite.
		For the second term, by Sobolev embedding $H^1(\Omega) \hookrightarrow L^6$ and $A^{\frac{1}{2}} \partial _t u \in L^2 (0,T; L^2)$ from Step 2, we have
		\[
		\| \partial _t u \|_{L^2(t_0,T ; L^6)}
		\leq C \| \partial _t u \|_{L^2 (t_0 ,T ; H^1 (\Omega))}
		\leq C \| \partial _t u \|_{L^2 (t_0 ,T ; H^1 (A)) } < \infty .
		\]
		We also estimate $\| \nabla u \|_{L^3}$ using the embedding theorem Lemma~\ref{lem:1222-3}: 
		\[
		\| \nabla u \|_{L^3} 
		\leq C \| u \|_{H^{s}(A)} , \quad \frac{3}{2} < s < 2,
		\]
		which is finite by Step 1.
		For the fourth term, the estimate is:
		\[
		\| u \|_{L^\infty (t_0, T; L^\infty)} \| \nabla \partial _\tau u \|_{L^{2} (t_0,T ; L^2)}
		\leq C \| u \|_{L^\infty (t_0, T ; H^s(A))} \| A^{\frac{1}{2}} \partial _\tau u \|_{L^2 (t_0,T ; L^2)}
		< \infty .
		\]
		Therefore, we conclude that $A \partial _t u \in L^2 (t_0 , T ; L^2)$.

		Finally, we estimate  $\|u\|_{L^2(t_0, T; H^4(\Omega))}$. 
		It follows from the elliptic estimate in Lemma~\ref{lem:1222-10} and the equations that 
		\[
		\|u\|_{H^4(\Omega)} = \| A^{-1} Au \|_{H^4(\Omega)} 
		\leq C \|Au\|_{H^2(\Omega)} \leq C \| \mathbb P((u \cdot \nabla)u) \|_{H^2(\Omega)} + C \|\partial_t u\|_{H^2(\Omega)}. 
		\]
		The $H^4(\Omega)$ norm of the solution is then estimated as:
		\[
		\begin{split}
			\| u \|_{H^4 (\Omega)} 
			\leq 
			& C (\| u \|_{L^6} + \| \nabla ^2 u \|_{L^6}) \| \nabla u  \|_{L^3} +C \| u \|_{L^\infty} \| u \|_{H^3 (\Omega)} + C\| A \partial _t u \|_{L^2} 
			\\
			\leq 
			& C \| u \|_{H^3(\Omega)} \| u \|_{H^s(A)}+ C\| A \partial _t u \|_{L^2}, 
			\quad \text{ with }\frac{3}{2} < s < 2. 
		\end{split}
		\]
		By taking the $L^2$ norm in $t$, we obtain:
		\[
		\| u \|_{L^2 (t_0,T ; H^4 (\Omega))}
		\leq C \| u \|_{L^\infty (t_0 ,T ; H^s(A))} \| u \|_{L^2 (t_0,T ; H^3 (\Omega))}
		+ C \| A \partial _t u \|_{L^2 (t_0,T ; L^2)}< \infty.
		\]
		The finiteness follows from the estimates in Step 1, Step 3, and the conclusion $A \partial _t u \in L^2 (t_0 , T ; L^2)$ just established.
	\end{pf}

	\section{Uniform Boundedness with respect to $\varepsilon$}

The goal of this section is to establish the uniform boundedness of the solution $u_\varepsilon$ in $L^\infty(0, T; B^{\frac{5}{2}}_{2,1}(A))$ with respect to $\varepsilon$. To this end, we provide two propositions: Proposition~\ref{prop:1225-1} ensures the regularity in $L^\infty(0, T; B^{\frac{5}{2}}_{2,1}(A))$, while Proposition~\ref{prop:1218-1} establishes the uniform boundedness and the time continuity.

\begin{prop} \label{prop:1225-1} 
	Let $0 < \varepsilon < 1$ and $u_0 \in B^{\frac{5}{2}}_{2,1}(A)$. Then, there exists a time $T > 0$ such that the integral equation \eqref{eq:IE} admits a unique solution $u_\varepsilon$ satisfying
	\[
	u_\varepsilon \in C([0,T], H^1(A)) \cap L^\infty(0,T; B^{\frac{5}{2}}_{2,1}(A)).
	\]
\end{prop}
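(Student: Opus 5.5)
Since $B^{\frac52}_{2,1}(A)\hookrightarrow H^1(A)$ (positivity of the spectrum gives $\|u\|_{H^1(A)}\le C\|u\|_{B^{5/2}_{2,1}(A)}$), Proposition~\ref{prop:1128-1} yields a unique solution $u_\varepsilon\in C([0,T_\varepsilon],H^1(A))$ of \eqref{eq:IE}. The point is to propagate $B^{\frac52}_{2,1}(A)$ regularity on a time interval that may depend on $\varepsilon$, since uniformity in $\varepsilon$ is deferred to Proposition~\ref{prop:1218-1}. I would therefore work directly on the mild formulation, using the smoothing available for fixed $\varepsilon>0$. This is cheaper than running the energy method here.

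Concretely, I would run a second fixed point argument in
\[
Y_T:=\{u\in C([0,T],H^1(A))\cap L^\infty(0,T;B^{\frac52}_{2,1}(A)) \mid \|u\|_{L^\infty(0,T;B^{5/2}_{2,1}(A))}\le 2M\}, \qquad \|u_0\|_{B^{5/2}_{2,1}(A)}\le M .
\]
For the linear term, Proposition~\ref{prop:0114-1} and the commutation of $\phi_j(\sqrt{A})$ with $e^{-t\varepsilon A}$ give $\|\phi_j(D)e^{-t\varepsilon A}u_0\|_{L^2}\le \|\phi_j(D)u_0\|_{L^2}$, so $\|e^{-t\varepsilon A}u_0\|_{B^{5/2}_{2,1}(A)}\le\|u_0\|_{B^{5/2}_{2,1}(A)}$.

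For the Duhamel term, I would split into dyadic pieces and use the spectral bound
\[
\|\phi_j(D)e^{-(t-\tau)\varepsilon A}f\|_{L^2}\le Ce^{-c\varepsilon(t-\tau)2^{2j}}\|\phi_j(D)f\|_{L^2},
\]
which is valid in $L^2$ by the spectral theorem. Integrating $2^{\frac52 j}e^{-c\varepsilon(t-\tau)2^{2j}}$ in $\tau$ gains $2^{2\theta j}(\varepsilon(t-\tau))^{-\theta}$ for $\theta<1$. It therefore suffices to bound $\mathbb P((u\cdot\nabla)u)$ in $B^{\frac52-2\theta}_{2,1}(A)$, for instance with $\theta=3/4$, in $B^{1}_{2,1}(A)$. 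Here I use the Remark after the definition of $D$ to write $\phi_j(D)\mathbb P=\phi_j(\sqrt{B})\mathbb P$. I would then bound $\|\mathbb P F\|_{B^1_{2,1}(A)}$ by $C\|F\|_{H^{1+\delta}(\Omega)}$ for small $\delta>0$, via Lemma~\ref{lem:1222-9}, interpolation, and $H^{1+\delta}(A)\hookrightarrow B^1_{2,1}(A)$ (the second inequality of Lemma~\ref{lem:1222-5}). The product is estimated by $\|(u\cdot\nabla)u\|_{H^{1+\delta}}\le C\|u\|_{L^\infty\cap W^{1,\infty}}\|u\|_{H^{2+\delta}}$. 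Both factors are controlled by the $B^{5/2}_{2,1}(A)$ norm: $W^{1,\infty}$ through the third inequality of Lemma~\ref{lem:1222-5}, and $H^{2+\delta}(\Omega)$ through Lemma~\ref{lem:1222-10} and Lemma~\ref{lem:1222-4}(2).

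Altogether the Duhamel term is bounded by $C\varepsilon^{-3/4}T^{1/4}\|u\|_{Y_T}^2$. The difference estimate follows in the same way. The fixed point then agrees with the $H^1(A)$ solution by the uniqueness in Proposition~\ref{prop:1128-1}. Continuity in $H^1(A)$ is inherited from Proposition~\ref{prop:1128-1}.

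The main obstacle is the product step. Boundary conditions are not preserved by $(u\cdot\nabla)u$, so one cannot simply use paraproducts in $B^s(A)$. Instead one needs the comparison between $B^s_{2,1}(A)$ and standard $H^s(\Omega)$ (for $s$ in a range up to $5/2$), which the paper only provides near $s=2$ through Lemma~\ref{lem:1222-4}(2). If that comparison is delicate, I would use the $\psi_j$ characterization together with \eqref{1225-1} and work with $\Delta\mathbb P((u\cdot\nabla)u)$ in $B^{1/2-2\theta+2}$-type norms. The choice $\theta$ close to $1$ keeps the required smoothness of the nonlinear term low, at the price of a worse power of $\varepsilon$, which is harmless here.
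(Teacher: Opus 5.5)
Your approach is correct and takes a different route from the paper.

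\textbf{How the two routes differ.} The paper proves this proposition with the same energy method it later reuses for uniform bounds. It applies $\Delta$ to the equation, localizes with $\psi_j(D)$, and tests against $\Delta u_\varepsilon$. The transport term is handled by a commutator argument, and the remainder is split into $N_1,N_2,N_3$. It then closes a continuity argument for the truncated sums $\widetilde A_J(t)$, with the $\varepsilon$-dependent quantity $\|u_\varepsilon\|_{L^2(0,T;H^3(\Omega))}$ absorbed into $C_0$. Justifying the energy identity requires the extra regularity of Proposition~\ref{prop:1203-1}.

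You instead contract directly on \eqref{eq:IE}. You use only the spectral decay of $e^{-\varepsilon sA}\phi_j(D)$ and accept an $\varepsilon$-dependent loss $\varepsilon^{-\theta}T^{1-\theta}$. Your route is shorter and needs none of Proposition~\ref{prop:1203-1}. It also gives an existence time depending only on $\varepsilon$ and $\|u_0\|_{B^{5/2}_{2,1}(A)}$, which is a clean continuation criterion for the argument in Proposition~\ref{prop:1218-1}. The paper's route buys something else: with $\|u_\varepsilon\|_{H^3}$ replaced by $\|u_\varepsilon\|_{B^{5/2}_{2,1}(A)}$, its computation is exactly the one that yields the $\varepsilon$-independent bound. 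Your $T$ shrinks as $\varepsilon\to0$ and contributes nothing toward uniformity.

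\textbf{The product step.} As you suspected, your main line with $\theta=\frac34$ is the weak point. Interpolating Lemma~\ref{lem:1222-9} gives $\mathbb PF\in H^{1+\delta}(\Omega)$. Bounding $\|\mathbb PF\|_{H^{1+\delta}(A)}$ by this, however, needs an identification of $D(A^{(1+\delta)/2})$ with $H^{1+\delta}(\Omega)\cap L^2_\sigma$, which the paper does not provide. Likewise, Lemma~\ref{lem:1222-4}(2) does not give $H^{2+\delta}(\Omega)$ control.

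Your remark about taking $\theta$ closer to $1$ already removes the issue, and the $\psi_j$/$\Delta\mathbb P$ fallback is unnecessary. For any $\theta\in(\frac34,1)$ we have $\frac52-2\theta<1$. Positivity of the spectrum, Lemma~\ref{lem:1222-4}(1), and Lemma~\ref{lem:1222-9} with $m=1$ then give
\[
\sum_j 2^{(\frac52-2\theta)j}\|\phi_j(D)\mathbb PF\|_{L^2}\le C\|\mathbb PF\|_{H^1(A)}\le C\|\nabla\mathbb PF\|_{L^2}\le C\|F\|_{H^1(\Omega)} .
\]
For the product itself,
\[
\|(u\cdot\nabla)u\|_{H^1(\Omega)}\le C\|u\|_{W^{1,\infty}}\|u\|_{H^2(\Omega)}\le C\|u\|_{B^{5/2}_{2,1}(A)}^2 .
\]
This uses Lemma~\ref{lem:1222-5}, Lemma~\ref{lem:1222-8}(1), and the embedding $B^{5/2}_{2,1}(A)\hookrightarrow H^2(A)$.

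Together these bound the Duhamel term by $C\varepsilon^{-\theta}T^{1-\theta}\|u\|^2_{L^\infty(0,T;B^{5/2}_{2,1}(A))}$. The difference estimate follows the same way. The only outside fact needed is the standard form-domain identification $H^1(\Omega)\cap L^2_\sigma=H^1(A)$.

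You can close the fixed point either in the full norm of $Y_T$ or in the $C([0,T],H^1(A))$ metric. In the second case, the $B^{5/2}_{2,1}(A)$-ball is closed under that convergence, by Fatou's lemma applied to the dyadic pieces $\phi_j(D)u_n(t)\to\phi_j(D)u(t)$ in $L^2$.
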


\begin{pf}
		Step 1. 
It follows from Propositions~\ref{prop:1128-1}, \ref{prop:1203-1} that a unique solution $u_\varepsilon \in C([0,T], H^1(A))$ exists and possesses the following regularity properties:
\[
\begin{split}
	& u_\varepsilon \in 
L^2 ((0, T], H^2(A))
\cap L^2 ((0, T], H^3(\Omega))
\cap L^2 _{\rm loc }((0, T], H^4(\Omega)), \quad
\\
& 
\partial _t u_\varepsilon \in L^2 _{\rm loc} ((0,T], H^2(A)).
\end{split}
\]
Applying the Laplacian $\Delta$ to the partial differential equations yields the following differentiated equations: 
\[
\partial _t \Delta u_\varepsilon - \varepsilon \Delta^2 u_\varepsilon + (u_\varepsilon \cdot \nabla ) \Delta u_\varepsilon
+ N(u_\varepsilon,u_\varepsilon) = 0
\quad \text{ in } L^2 (0,T; L^2),
\]
and  \eqref{1225-1} in the $L^2$ setting  yields that 
the nonlinear remainder term $N(u,v)$ is given by 
\[
N(u,v) := (\Delta u \cdot \nabla ) v + 2 (\nabla u \cdot \nabla) \nabla v
+ \nabla {\rm div}\, \big((u \cdot \nabla) v\big) .
\]
We note that the term $2 (\nabla u \cdot \nabla) \nabla v$ is understood componentwise. Specifically, for the $k$-th component ($k= 1,2,3$), it is defined by:
\[
\Big( 2 (\nabla u \cdot \nabla) \nabla v \Big) _k
= \sum _{i,j = 1}^3
2 (\partial _{x_i} u_j) ( \partial_{x_j} \partial_{x_i} v_k).
\]
We now apply the spectral restriction operator $\psi _j (D)$ to the differentiated equation to obtain: 
\begin{equation}\label{0910-1}
	\psi_j(D)
	\Big(
	\partial _t \Delta u_\varepsilon - \varepsilon \Delta^2 u_\varepsilon + (u_\varepsilon \cdot \nabla ) \Delta u_\varepsilon
	+ N(u_\varepsilon,u_\varepsilon)
	\Big) = 0
	\quad \text{ in } L^2 (0,T; L^2).
\end{equation}
In the subsequent steps, we will multiply the equation \eqref{0910-1} by an appropriate test function, such as $\Delta u_\varepsilon$, and integrate in space to obtain energy estimates. We structure the proof as follows: we analyze the terms involving $\partial _t \Delta u_\varepsilon$, $\varepsilon \Delta^2 u_\varepsilon$, and $(u _\varepsilon \cdot \nabla ) \Delta u_\varepsilon$ in Step~2; we estimate the nonlinear remainder term $N(u_\varepsilon, u _\varepsilon)$ in Step 3; and finally we prove the existence of $T$ such that $u \in L^\infty (0,T; B^{\frac{5}{2}}_{2,1}(A))$  in Step 4.

	\vskip2mm 

\noindent 
Step 2. 
We analyze the terms in the equation \eqref{0910-1} multiplied by the test function $\Delta u_{\varepsilon}$.
The time derivative term is converted into a total time derivative:
\[
\begin{split}
	\int_{\Omega} \psi_j (D) \partial_t \Delta u_\varepsilon \cdot \Delta u_\varepsilon \,dx
	&= \int_{\Omega} \partial_t \left( \psi_j (D)^{\frac{1}{2}}\Delta u_\varepsilon \right) \cdot \left( \psi_j (D)^{\frac{1}{2}} \Delta u_\varepsilon \right) \,dx
	\\
	&= \dfrac{1}{2} \partial _t \int _{\Omega} |\psi_j(D)^{\frac{1}{2}}\Delta u_\varepsilon |^2 \,dx
	= \dfrac{1}{2} \partial _t \| \psi_j (D)^{\frac{1}{2}} \Delta u_\varepsilon \|_{L^2}^2.
\end{split}
\]

For the viscous term, applying Lemma~\ref{lem:1225-1}, we ensure a favorable sign:
\[
- \int_{\Omega} \Big( \psi_j(D) \big( \varepsilon \Delta^2 u_\varepsilon \big) \Big) \cdot \Delta u_{\varepsilon} \,dx = \varepsilon \int_{\Omega} \Big( \psi_j(D)^{\frac{1}{2}} \Delta u_{\varepsilon} \Big) \cdot \Big( \psi_j(D)^{\frac{1}{2}} \Delta^2 u_{\varepsilon} \Big) \,dx \geq 0 .
\]

We estimate the convection term. Using the symmetry of the operator $\psi_j (D)$, we first shift the operator:
\[
\int_{\Omega} \Big( \psi _j (D)\big((u_\varepsilon \cdot \nabla) \Delta u_\varepsilon\big)\Big) \cdot \Delta u_\varepsilon \,dx
= \int_{\Omega}\big((u_\varepsilon \cdot \nabla) \Delta u_\varepsilon\big) \cdot \psi _j (D)\Delta u_\varepsilon \,dx. 
\]
The next step involves an identity based on integration by parts together with the boundary condition $u_\varepsilon \cdot \nu = 0$: 
\[
\int_{\Omega}\big((u_\varepsilon \cdot \nabla) \Delta u_\varepsilon\big) \cdot \psi _j (D)\Delta u_\varepsilon \,dx = -\int_{\Omega} \big((u_\varepsilon \cdot \nabla)\psi _j (D) \Delta u_\varepsilon\big) \cdot \Delta u_\varepsilon \,dx. 
\]
Then, using the specific definition of the operator $\psi_j(D)$ via the spectral operator $B$:
\[
\psi_j(D) = \sum _{l=0}^1 (-1)^{l+1} (1+ 2^{-2j-2l} B)^{-1}, 
\]
we write 
\[
\int_{\Omega} \Big( \psi _j (D)\big((u_\varepsilon \cdot \nabla) \Delta u_\varepsilon\big)\Big) \cdot \Delta u_\varepsilon \,dx
= - \sum _{l=0}^1 (-1)^{l+1} \int_{\Omega} \big((u_\varepsilon \cdot \nabla)(1+ 2^{-2j-2l} B)^{-1} \Delta u_\varepsilon\big) \cdot \Delta u_\varepsilon \,dx .
\]
We use the identity $\Delta = \nabla \text{div} - \text{rot} \, \text{rot}$ and the divergence-free condition of the term $(1+ 2^{-2j-2l} B)^{-1} \Delta u_\varepsilon$ to write:
\[
\begin{split}
	\Delta u_\varepsilon 
	=& (1+2^{-2j-2l} B) (1+2^{-2j-2l} B)^{-1} \Delta u_ \varepsilon
	\\ 
	= &(1+2^{-2j-2l} \text{rot}\, \text{rot} ) (1+2^{-2j-2l} B)^{-1} \Delta u_ \varepsilon .
\end{split}\]
Substituting this back and applying integration by parts: 
\[
\begin{split}
	&\int_{\Omega} \Big( \psi _j (D)\big((u_\varepsilon \cdot \nabla) \Delta u_\varepsilon\big)\Big) \cdot \Delta u_\varepsilon \,dx
	\\
	&= \sum _{l=0}^1 (-1)^{l+1} 2^{-2j-2l}
	\int_{\Omega} \text{rot} \Big((u_\varepsilon \cdot \nabla)(1+ 2^{-2j-2l} B)^{-1} \Delta u_\varepsilon\Big)
	\cdot \text{rot} \big( (1+ 2^{-2j-2l} B)^{-1} \Delta u_\varepsilon \big) \,dx.
\end{split}
\]
Integration by parts and the boundary condition $u_\varepsilon \cdot \nu = 0$ imply that 
\[
\int_{\Omega} \big((u_\varepsilon \cdot \nabla)U\big) \cdot U \,dx = 0, \quad \text{ with }
U = {\rm rot } \big((1+ 2^{-2j-2l} B)^{-1} \Delta u_\varepsilon \big),
\]
and we obtain: 
\[
\begin{split}
	& \int_{\Omega} \Big( \psi _j (D)\big((u_\varepsilon \cdot \nabla) \Delta u_\varepsilon\big)\Big) \cdot \Delta u_\varepsilon \,dx
	\\
	&=  \sum _{l=0}^1 (-1)^{l+1} 2^{-2j-2l}
	\int_{\Omega}
	\Big\{ \big((\widetilde {\rm rot } \, u_\varepsilon \cdot \nabla) (1+ 2^{-2j-2l} B)^{-1} \Delta u_\varepsilon \big)
	\Big\} \cdot U \,dx, 
\end{split}
\]
where $U = {\rm rot } (1+ 2^{-2j-2l} B)^{-1} \Delta u_\varepsilon$ and the notation $\widetilde{\rm rot } \, u$ is defined as the remainder of the vector calculus identity:
\[
(\widetilde {\rm rot } \, u_\varepsilon \cdot \nabla ) v := {\rm rot } \big( (u_\varepsilon \cdot \nabla) v \big) - (u_\varepsilon \cdot \nabla) {\rm rot } \, v.
\]
Applying the H\"{o}lder inequality to the resulting term: 
\[
\begin{split}
	& \Big| \int_{\Omega} \Big( \psi _j (D)\big((u_\varepsilon \cdot \nabla) \Delta u_\varepsilon\big)\Big) \cdot \Delta u_\varepsilon \,dx \Big|
	\\
	&\leq \sum _{l=0}^1 C 2^{-2j-2l}
	\| \widetilde {\rm rot} \, u_\varepsilon \|_{L^\infty}
	\| \nabla (1+ 2^{-2j-2l} B)^{-1} \Delta u_\varepsilon \|_{L^2}
	\| U \|_{L^2}
	\\
	&\leq C 2^{-2j}  \| \nabla u_\varepsilon \|_{L^\infty} 
	\cdot  \| \nabla (1+ 2^{-2j-2l} B)^{-1} \Delta u_\varepsilon \|_{L^2}  \cdot \| U \|_{L^2}. 
\end{split}
\]
It follows from \eqref{1219-3}  that 
\[
\begin{split}
	\| \nabla(1+ 2^{-2j-2l} B)^{-1} \Delta u_\varepsilon   \|_{L^2},  
	\| U \|_{L^2} 
	\leq  C 2^j \| \psi _j (D)^{\frac{1}{2}} \Delta u_\varepsilon \|_{L^2}, 
\end{split}
\]
which implies that 
\[
\Big| \int_{\Omega} \Big( \psi _j (D)\big((u_\varepsilon \cdot \nabla) \Delta u_\varepsilon\big)\Big) \cdot \Delta u_\varepsilon \,dx \Big|
\leq C \| \nabla u_\varepsilon \|_{L^\infty} 
\| \psi_j (D)^{\frac{1}{2}} \Delta u_\varepsilon \|_{L^2}^2 .
\]

\vskip2mm 

		\noindent 
Step 3. 
We estimate the integral involving the nonlinear remainder term $N(u_{\varepsilon}, u_{\varepsilon})$:
\[
\int _{\Omega } \Big( \psi_j (D) N(u_{\varepsilon}, u_{\varepsilon})\Big) \cdot \Delta u_{\varepsilon} \,dx .
\]
Due to the orthogonality property (see Lemma~\ref{lem:0910-1}) of the pressure term $\nabla {\rm div}\, \big((u_{\varepsilon} \cdot \nabla )u_{\varepsilon}\big)$ with a divergence-free vector $\psi_j (D) \Delta u_{\varepsilon}$, the integral simplifies to:
\[
\int _{\Omega } \Big( \psi_j (D) N(u_{\varepsilon}, u_{\varepsilon})\Big) \cdot \Delta u_{\varepsilon} \,dx
= \int_{\Omega}
\Big\{ (\Delta u_{\varepsilon} \cdot \nabla) u_{\varepsilon} + 2 (\nabla u_{\varepsilon} \cdot \nabla) \nabla u_{\varepsilon} \Big\}
\cdot\psi_j (D)  \Delta u_{\varepsilon} \,dx .
\]
We focus on estimating the term involving $(\Delta u_{\varepsilon} \cdot \nabla) u_{\varepsilon}$, as the second term $2 (\nabla u_{\varepsilon} \cdot \nabla) \nabla u_{\varepsilon}$ can be treated analogously using similar product decomposition techniques.

We use the low-frequency cut-off operator $S_j = \sum _{k \leq j} \phi_k (D)$ to decompose the term $(\Delta u_{\varepsilon} \cdot \nabla) u_{\varepsilon}$: 
\[
\begin{split}
	(\Delta u_{\varepsilon} \cdot \nabla) u_{\varepsilon}
	&= (\Delta u_{\varepsilon} \cdot \nabla) (1-S_j) u_{\varepsilon}
	+ ((1-S_j)\Delta u_{\varepsilon} \cdot \nabla) S_j u_{\varepsilon}
	+ (S_j\Delta u_{\varepsilon} \cdot \nabla) S_j u_{\varepsilon}
	\\
	&= N_1 + N_2 + N_3 .
\end{split}
\]

	Next, we estimate the nonlinear terms. Applying $L^3\times L^6 \times L^2$ estimate, $H^1(\Omega) \hookrightarrow L^3$,  $H^1(\Omega) \hookrightarrow L^6$, and \eqref{1219-2}, the term $N_1$ is estimated as:
\begin{equation}\label{1090-2}
	\begin{split}
		\Big| \int _{\Omega} \Big( \psi _j(D) N_1 \Big) \cdot \Delta u_\varepsilon ~dx \Big| 
		\leq
		& C \| \Delta u_\varepsilon \|_{L^3} \| \nabla (1-S_j) u_\varepsilon\|_{L^6} \| \psi_j(D) \Delta u_\varepsilon\|_{L^2}
		\\
		\leq & C\| u_\varepsilon \|_{H^3(\Omega)}  
		\| \nabla (1-S_j) u_\varepsilon\|_{H^1(\Omega)} \| \psi_j(D) \Delta u_\varepsilon\|_{L^2}
		\\
		\leq & C\| u_\varepsilon \|_{H^3(\Omega)}  
		\|  (1-S_j) u_\varepsilon\|_{H^2(A)} \| \psi_j(D)^{\frac{1}{2}} \Delta u_\varepsilon\|_{L^2}. 
	\end{split}
\end{equation}
For $N_2$, we use $L^2 \times L^\infty \times L^2$ esimate, Lemma~\ref{lem:1222-5} and \eqref{1219-2}: 
\begin{equation}\label{1090-3}
	\begin{split}
	&	\Big| \int _{\Omega} \Big( \psi _j(D) N_2 \Big) \cdot \Delta u_\varepsilon ~dx \Big| 
	\\
		\leq
		& C \| (1-S_j)  \Delta u_\varepsilon\| _{L^2} \| \nabla S_j u_\varepsilon \|_{L^\infty} \| \psi _j(D) \Delta u_\varepsilon \|_{L^2}
		\\
		\leq 
		& C \| (1-S_j)  u_\varepsilon\| _{H^2(A)} \sum _{k \leq j }  2^{\frac{5}{2}j}\| \phi_k(D) u_\varepsilon \|_{L^2} \| \psi _j(D)^{\frac{1}{2}} \Delta u_\varepsilon \|_{L^2}. 
	\end{split}
\end{equation}
		For the term $N_3 = (S_j\Delta u_{\varepsilon} \cdot \nabla) S_j u_{\varepsilon}$, we use the vector calculus identity and boundary conditions.
Since $\psi_j (D) \Delta u_{\varepsilon} = \Delta \psi_j (D) u_{\varepsilon}$ and $\nabla \cdot (\psi_j (D) u_{\varepsilon}) = 0$, we have
\[
\psi_j (D) \Delta u_{\varepsilon} = \text{rot} \, \text{rot} \, \psi_j (D) u_{\varepsilon} .
\]
We apply integration by parts using the curl operator and ${\rm rot \, } \psi_j(D) u_\varepsilon \times \nu = 0 $ on the boundary:
\[
\int _\Omega \Big( \psi_j (D) N_3\Big) \cdot \Delta u_{\varepsilon} \,dx
= \int _{\Omega}
\text{rot} \, N_3 \cdot \text{rot} \, \psi_j (D) u_{\varepsilon} \,dx .
\]
By $L^2 \times L^\infty \times L^2$ estimate: 
\begin{equation}\notag 
	\begin{split}
	&	\Big| \int _{\Omega} \Big( \psi _j(D) N_3 \Big) \cdot \Delta u_\varepsilon ~dx \Big| 
	\\
		\leq 
		&
		 \big( \| \nabla S_j \Delta u_\varepsilon \|_{L^2} \| \nabla S_j u_\varepsilon \|_{L^\infty} 
		+ \| S_j \Delta u _\varepsilon \|_{L^6} \| \nabla^2  S_j u_\varepsilon \|_{L^3}
		\big) 
		\| {\rm rot \, } \psi_j(D) u_\varepsilon \|_{L^2}.
	\end{split}
\end{equation}
On the term $\| {\rm rot \, } \psi_j (D) u_\varepsilon \|_{L^2}$, we estimate 
\[
\| {\rm rot \, } \psi_j (D) u_\varepsilon \|_{L^2} 
\leq C\| B^{\frac{1}{2}} \psi_j (D) u_\varepsilon \|_{L^2}. 
\]
		We notice that 
\[
\begin{split}
	B^{\frac{1}{2}}\psi _j(D)u_\varepsilon  
	=&
	\dfrac{3}{4}\cdot 
	2^{-2j}B^{\frac{1}{2}} (1+2^{-2j-2}B)^{-1} (1+2^{-2j}B)^{-1} B u_\varepsilon
	\\
	=& \dfrac{\sqrt{3}}{2} \cdot 2^{-j } 
	(1+2^{-2j-2}B)^{-\frac{1}{2}} (1+2^{-2j}B)^{-\frac{1}{2}}
	\psi_j(D)^{\frac{1}{2}} \Delta u_\varepsilon ,
\end{split}
\]
and that $(1+2^{-2j-2}B)^{-\frac{1}{2}} (1+2^{-2j}B)^{-\frac{1}{2}}$ 
is a bounded operator uniform on $L^2$ with respect to $j$, 
which yield that 
\[
\| {\rm rot \, } \psi_j(D) u \|_{L^2}
\leq C 2^{-j} \| \psi_j(D)^{\frac{1}{2}} \Delta u_\varepsilon \|_{L^2}, 
\]
Substituting this back and applying Lemma~\ref{lem:1222-5}, we obtain the final estimate for $N_3$:
\begin{equation}\label{1090-4}
	\begin{split}
		& 
		\Big| \int _{\Omega} \Big( \psi _j(D) N_3 \Big) \cdot \Delta u_\varepsilon ~dx \Big| 
			\\
	\leq 
		& C  \Big(\sum_{k \leq j}  2^{3k} \| \phi_k(D) u_\varepsilon \| _{L^2} \Big)    \Big(\sum_{l \leq j}  2^{\frac{5}{2}l} \| \phi_l(D) u_\varepsilon \| _{L^2} \Big)
		\cdot  2^{-j}\|  \psi _j(D)^{\frac{1}{2}} \Delta u_\varepsilon \|_{L^2}. 
	\end{split}
\end{equation}

\noindent 
Step 4. 
	By Steps~1, 2 and the Sobolev embedding $H^2(\Omega) \hookrightarrow L^\infty$, we obtain:
\begin{equation}\label{0113-1}
\begin{split}
		\partial _t \| \psi_j(D)^{\frac{1}{2}} \Delta u_{\varepsilon}  \|_{L^2} ^2 
	\leq 
	&C  \| \nabla u_\varepsilon \|_{L^\infty} \| \psi_j(D)^{\frac{1}{2}} \Delta u_{\varepsilon}  \|_{L^2} ^2 
	+ \Big|  \int _{\Omega}  \Big( \psi_j(D) N(u_\varepsilon , u_\varepsilon) \Big)  \cdot \Delta u_\varepsilon \Big| 
	\\
	\leq 
	&C\| u_\varepsilon \|_{H^3(\Omega)}  \| \psi_j(D)^{\frac{1}{2}} \Delta u_{\varepsilon}  \|_{L^2} ^2 
+ \Big|  \int _{\Omega}  \Big( \psi_j(D) N(u_\varepsilon , u_\varepsilon) \Big)  \cdot \Delta u_\varepsilon \Big|. 
\end{split}
\end{equation}
Assuming $\| \psi_j(D) ^{\frac{1}{2}} \Delta u_\varepsilon \|_{L^2} > 0$, we divide the above by this norm.

Now, fix a natural number $J$ and consider the partial sum over $j \leq J$. Dividing \eqref{0113-1} by $\| \psi_j(D)^{\frac{1}{2}} \Delta u_{\varepsilon} \|_{L^2}$, multiplying by $2^{\frac{1}{2}j}$, integrating in time, and summing over $j \leq J$, the time derivative term becomes:
\[ \sum_{j \leq J} 2^{\frac{1}{2}j}\| \psi_j(D) ^{\frac{1}{2}}\Delta u_\varepsilon (t) \|_{L^2}  
-  \sum_{j \leq J}  2^{\frac{1}{2}j}  \| \psi_j(D)^{\frac{1}{2}} \Delta u_\varepsilon (0) \|_{L^2} . 
\]
For the first term of the right-hand-side of \eqref{0113-1}, we have: 
\[
\begin{split}
	& 
	\sum _{j \leq J} 2^{\frac{1}{2}}\int_0^t  \| u_\varepsilon \|_{H^3(\Omega)} \| \psi_j(D)^{\frac{1}{2}} \Delta u_{\varepsilon}  \|_{L^2}  ~d\tau 
\\
\leq 
&C T^{\frac{1}{2}} \| u_\varepsilon \|_{L^2(0,T; H^3(\Omega))} 
\sup _{t \in [0,T] }\sum_{j \leq J} 2^{\frac{1}{2}j} \| \psi_j(D) \Delta u_\varepsilon (t)\|_{L^2}. 
\end{split}
\]
Applying \eqref{1090-2}, it follows that: 
\[
\begin{split}
	& 
	\sum _{j \leq J} 2^{\frac{1}{2}j}\int_0^t \dfrac{\rm (L.H.S. \, of \, \eqref{1090-2})}{\| \psi_j(D)^{\frac{1}{2}}\Delta u \|_{L^2}} ~d\tau 
	\\
	\leq 
	&C  \| u_\varepsilon \|_{L^2(0,T; H^3(\Omega))} 
	\Big\|  \sum_{j \leq J} 2^{\frac{1}{2}j} \sum_{j < k \leq J } 2^{2k} \| \phi_k(D) u_\varepsilon \|_{L^2}
	+ \| (1-S_J) u_\varepsilon \|_{H^2(A)}
	\Big\|_{L^2(0,T)} 
	\\
	\leq 
	&C  \| u_\varepsilon \|_{L^2(0,T; H^3(\Omega))} 
	\Big( 
	T^{\frac{1}{2}}\Big\|  \sum_{ k \leq J } 2^{\frac{5}{2}k} \| \phi_k(D) u_\varepsilon \|_{L^2}\Big\|_{L^\infty (0,T)}
	+ \| (1-S_J) u_\varepsilon \|_{L^2(0,T; H^2(A))}
	\Big) . 
\end{split}
\]
Similarly, for \eqref{1090-3} and \eqref{1090-4}
\[
\begin{split}
	& 
	\sum _{j \leq J} 2^{\frac{1}{2}j}\int_0^t \dfrac{\rm (L.H.S. \, of \, \eqref{1090-3})}{\| \psi_j(D)^{\frac{1}{2}}\Delta u \|_{L^2}} ~d\tau 
	\\
	\leq 
	&C T^{\frac{1}{2}}
	\Big( 
	T^{\frac{1}{2}}\Big\|  \sum_{ k \leq J } 2^{\frac{5}{2}k} \| \phi_k(D) u_\varepsilon \|_{L^2}\Big\|_{L^\infty (0,T)}
	+ \| (1-S_J) u_\varepsilon \|_{L^2(0,T; H^2(A))}
	\Big) 
	\\
	& 	\cdot \Big\| \sum_{ k \leq J } 2^{\frac{5}{2}k} \| \phi_k(D) u_\varepsilon \|_{L^2}\Big\|_{L^\infty (0,T)}
\end{split}
\]
and 
\[
\begin{split}
	\sum _{j \leq J} 2^{\frac{1}{2}j}\int_0^t \dfrac{\rm (L.H.S. \, of \, \eqref{1090-4})}{\| \psi_j(D)^{\frac{1}{2}}\Delta u \|_{L^2}} ~d\tau 
	\leq 
	& CT	\Big\| \sum_{ k \leq J } 2^{\frac{5}{2}k} \| \phi_k(D) u_\varepsilon \|_{L^2}\Big\|_{L^\infty (0,T)} ^2. 
\end{split}
\]

We introduce the following quantities, 
which arise from the difference between $\phi_j$ and $\psi_j$:   
\[
A_{J}(t) 
:= \sup _{\tau \in (0,t)} \sum_{j \leq J} 2^{\frac{5}{2}j} \| \phi_j(D) u_\varepsilon (\tau) \|_{L^2}, 
\quad 
\widetilde A_{J}(t) 
:= \sup _{\tau \in (0,t)} \sum_{j \leq J} 2^{\frac{5}{2}j} \| \psi_j(D) u_\varepsilon (\tau) \|_{L^2}. 
\]
Note that $u_\varepsilon \in L^2 (0,T ; H^3(\Omega))$, $\sup _{J} \| (1-S_J) u_\varepsilon \|_{L^2(0,T ; H^2(A))} < \infty$, and $A_J(t) \leq C \widetilde A_J (t)$. 
Applying the estimates above to \eqref{0113-1} implies the existence of a constant $C_0$, depending on $u_0$ and $\varepsilon$, such that:
\[
\begin{split}
	\widetilde A_J (t)
	 \leq & \widetilde A_J(0) + C_0 + C_0 (t^{\frac{1}{2} } + t) 
(  A_J(t) + A_J(t)^2)
\\
\leq 
&\widetilde A_J(0) + C_0 + C_0 (t^{\frac{1}{2} } + t) 
( \widetilde  A_J(t) + \widetilde A_J(t)^2).
\end{split}
\]
for all $J $. 
A standard continuity argument then ensures that there exists $T_0 > 0$ such that
\[
\widetilde A_J(t) \leq 2 (\widetilde A_J(0) + C_0) \quad \text{ for all } J  \text{ and } t \in [0,T_0]. 
\]
Taking the limit $J \to \infty$, the equivalence of Besov norms from Lemma~\ref{lem:1222-4} yields:
\[
\| u_\varepsilon (t) \|_{B^{\frac{5}{2}}_{2,1}(A)} \leq 2 (\widetilde A_J(0) + C_0)  
\quad \text{ for all } t \in [0,T_0]. 
\]
We then obtain the regularity $u_\varepsilon$ in $L^\infty (0,T_0; B^{\frac{5}{2}}_{2,1}(A))$. 
\end{pf}

	\begin{prop}\label{prop:1218-1}
		Let $0< \varepsilon< 1$ and $ u_0 \in B^{\frac{5}{2}}_{2,1 } (A)$.
		Then there exists an existence time, $T$, of the solution $u_{\varepsilon} \in C([0,T] , H^1 (A))$ of \eqref{eq:IE}, 
		which depends on $\| u_0 \|_{B^{\frac{5}{2}}_{2,1}(A)}$ and is independent of 
		$\varepsilon$, such that the solution $u_{\varepsilon}$ satisfies the following two properties:
		\begin{enumerate}
			\item Uniform boundedness: 
			$\displaystyle 
			\sup_{0 < \varepsilon < 1} \sum_{j \in \mathbb Z} 2^{\frac{5}{2}j}\| \phi_j(D)u_{\varepsilon}(t) \|_{ L^\infty (0,T ;  L^2)} < \infty , 
			$
			\item Regularity: $u_{\varepsilon} \in C([0,T], B^{\frac{5}{2}}_{2,1}(A))$. 
		\end{enumerate}
	\end{prop}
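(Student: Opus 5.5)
The plan is to redo the energy argument of Proposition~\ref{prop:1225-1}, but to close it in a way that uses only the quantity
\[
\widetilde A(t) := \sup_{\tau\in(0,t)} \sum_{j\in\mathbb Z} 2^{\frac12 j}\|\psi_j(D)^{\frac12}\Delta u_\varepsilon(\tau)\|_{L^2}.
\]
By Lemma~\ref{lem:1222-4}~(2) with $s=\frac52$, this quantity is equivalent to $\sup_\tau\|u_\varepsilon(\tau)\|_{B^{5/2}_{2,1}(A)}$.

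The point is to avoid every appeal to $\|u_\varepsilon\|_{L^2(0,T;H^3(\Omega))}$ and to the constant $C_0$ of Step~4 there, since both depend on $\varepsilon$. We keep the favourable sign of the viscous term (Lemma~\ref{lem:1225-1}), so no $\varepsilon$-dependent quantity ever appears.

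\emph{Convection term.} For the transport term we use the commutator bound of Step~2 with $\|\nabla u_\varepsilon\|_{L^\infty}$ as the factor. By the third inequality of Lemma~\ref{lem:1222-5} with $r=2$, $p=\infty$, this factor is bounded by $C\|u_\varepsilon\|_{B^{5/2}_{2,1}(A)}\le C\widetilde A(t)$.

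\emph{Remainder $N$: paraproduct splitting.} For $N(u_\varepsilon,u_\varepsilon)$ I would repeat the splitting $N_1+N_2+N_3$, but estimate each piece frequency by frequency rather than via $H^3$.
\begin{itemize}
\item In $N_1$ and $N_2$, the high-frequency factor $(1-S_j)u_\varepsilon$ sits in $L^2$ with two derivatives. The low-frequency factor is placed in $L^\infty$ via $\nabla S_j u$, or its analogue with $\nabla^2$.
\item For $N_1=(\Delta u\cdot\nabla)(1-S_j)u$, I would further decompose $\Delta u=S_j\Delta u+(1-S_j)\Delta u$. The low part of $\Delta u$ goes in $L^\infty$, with a bound like $\sum_{k\le j}2^{\frac72 k}\|\phi_k u\|_{L^2}$. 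The high--high part is placed in $L^1\to L^2$ using the Bernstein-type bound \eqref{1219-5} applied to $\psi_j(D)$.
\item This gives, for each $j$, a bound by $\|\psi_j^{1/2}\Delta u\|_{L^2}$ times convolution-type sums. Examples are $2^{-\frac12 j}\sum_{k>j}2^{2k}\|\phi_k u\|\cdot \sum_{l\le j}2^{\frac52 l}\|\phi_l u\|$ and $2^{-j}\sum_{k\le j}2^{3k}\|\phi_k u\|\cdot\sum_{l}2^{\frac52 l}\|\phi_l u\|$.
\end{itemize}

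\emph{Summation.} After dividing by $\|\psi_j^{1/2}\Delta u\|$, multiplying by $2^{j/2}$ and summing over $j$, the Young/Schur inequality for the kernels $2^{-\frac12|j-k|}$ gives
\[
\frac{d}{dt}\sum_j 2^{\frac12 j}\|\psi_j^{1/2}\Delta u_\varepsilon\|_{L^2}\le C\|u_\varepsilon\|_{B^{5/2}_{2,1}(A)}^2 ,
\]
with $C$ independent of $\varepsilon$. To justify dividing by the norm, I would work with $(\|\cdot\|^2+\delta)^{1/2}$ and let $\delta\to0$. The regularity from Proposition~\ref{prop:1203-1} justifies the computation on $[t_0,T]$, and we then let $t_0\to0$ using $u_\varepsilon\in C([0,T],H^1(A))$ together with lower semicontinuity.

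\emph{Closing and uniform time.} Integrating gives $\widetilde A(t)\le \widetilde A(0)+Ct\widetilde A(t)^2$. A continuity argument, which needs only the finiteness of $\widetilde A$ from Proposition~\ref{prop:1225-1}, then gives $\widetilde A(t)\le 2\widetilde A(0)$ for $t\le T:=(8C\widetilde A(0))^{-1}$. In parallel, the $H^1(A)$ solution extends up to time $T$. Since the $H^1$ norm is controlled by the $B^{5/2}_{2,1}$ norm, the local time of Proposition~\ref{prop:1128-1} can be iterated, even though it depends on $\varepsilon$. This proves (1).

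\emph{Time continuity, part (2).} For each fixed $N$, the finite sum $\sum_{j\le N}\phi_j(D)u_\varepsilon$ is continuous in time with values in every $B^s$, because $u_\varepsilon\in C([0,T],L^2)$ and $\phi_j(D)$ localizes the spectrum. It therefore suffices to show
\[
\lim_{N\to\infty}\sup_{t\in[0,T]}\sum_{j>N}2^{\frac52 j}\|\phi_j u_\varepsilon(t)\|_{L^2}=0 .
\]
I would get this by rerunning the differential inequality for the tail sum $\sum_{j>N}$. It is bounded by the initial tail, which is small since $u_0\in B^{5/2}_{2,1}$, plus $Ct\widetilde A$ times the tail, plus cross terms that decay in $N$ through the kernels $2^{-\frac12|j-k|}$. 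This is the standard Chemin-type continuity argument.

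\emph{Main obstacle.} I expect the main difficulty to be the high--high interactions in $N_1$ and in the term $2(\nabla u\cdot\nabla)\nabla u$, whose product has total regularity $3/2$. Closing at the critical index with the $\ell^1$ sum requires the extra $2^{-j/2}$ gain, which comes from \eqref{1219-5} and the Schur test. Boundary effects also prevent true frequency orthogonality of products, so I would rely only on the operator bounds \eqref{1219-2}, \eqref{1219-3}, and \eqref{1219-5}.
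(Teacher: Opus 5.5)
Your overall plan matches the paper's. You rerun the $\psi_j$-localized energy estimate of Proposition~\ref{prop:1225-1}, replacing every $\varepsilon$-dependent $H^3(\Omega)$ bound by $\|u_\varepsilon\|_{B^{5/2}_{2,1}(A)}$ via Lemma~\ref{lem:1222-5}, and keep the sign of the viscous term. You then weight by $2^{j/2}$, sum, close by a continuity argument, and continue the $H^1(A)$ solution. Two points need correcting.

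\emph{The high--high part of $N_1$.} The step you call the main obstacle is not one, and the tool you propose for it would not work.
\begin{itemize}
\item Inequality \eqref{1219-5} is only stated for the compactly supported $\phi_j$.
\item More seriously, you must keep the factor $\|\psi_j(D)^{1/2}\Delta u_\varepsilon\|_{L^2}$ that you later divide by. That requires $\psi_j(D)^{1/2}$ to be bounded from $L^1$ to $L^2$ (equivalently, by duality, from $L^2$ to $L^\infty$).
\item But $\psi_j(\lambda)^{1/2}\sim\sqrt3\,2^j\lambda^{-1}$ as $\lambda\to\infty$. So at high frequencies $\psi_j(D)^{1/2}$ gains only one derivative, whereas $L^1\to L^2$ boundedness in three dimensions needs a gain of $3/2$.
\item Applying $\psi_j(D)$ itself to the product does not help: it leaves the unlocalized factor $\|\Delta u_\varepsilon\|_{L^2}$ on the other side, so there is nothing to divide by.
\end{itemize}
No extra splitting is needed. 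The paper bounds all of $N_1$ at once:
\[
\begin{aligned}
\Big|\int_\Omega\big(\psi_j(D)N_1\big)\cdot\Delta u_\varepsilon\,dx\Big|
&\le\|\Delta u_\varepsilon\|_{L^3}\|\nabla(1-S_j)u_\varepsilon\|_{L^6}\|\psi_j(D)\Delta u_\varepsilon\|_{L^2}\\
&\le C\|u_\varepsilon\|_{B^{5/2}_{2,1}(A)}\sum_{k>j}2^{2k}\|\phi_k(D)u_\varepsilon\|_{L^2}\,\|\psi_j(D)^{1/2}\Delta u_\varepsilon\|_{L^2}.
\end{aligned}
\]
This uses $\|\Delta u\|_{L^3}\le C\|u\|_{B^{5/2}_{2,1}(A)}$ and $H^2\hookrightarrow W^{1,6}$. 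After dividing and weighting, the sum closes with no extra gain: $\sum_j 2^{j/2}\sum_{k>j}2^{2k}\|\phi_k(D)u_\varepsilon\|_{L^2}\le C\|u_\varepsilon\|_{B^{5/2}_{2,1}(A)}$, because $\sum_{j<k}2^{j/2}\le C2^{k/2}$. This is also the correct size of the $j$-th contribution. The factor $2^{-j/2}$ in your first example is not produced by any of the estimates.

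\emph{Property (1).} The quantity in (1) is a Chemin--Lerner norm: the supremum in $t$ sits inside the $j$-sum. Your bound $\widetilde A(t)\le 2\widetilde A(0)$ only controls $\sup_t\sum_j$. The missing step, which is the paper's Step~3, is to integrate \eqref{1211-1} for each fixed $j$, take the supremum in time for that $j$, and only then sum. This gives
\[
\sum_j 2^{\frac12 j}\sup_{t\in[0,T]}\|\psi_j(D)^{\frac12}\Delta u_\varepsilon(t)\|_{L^2}
\le C\|u_0\|_{B^{5/2}_{2,1}(A)}+CT\sup_{t\in[0,T]}\|u_\varepsilon(t)\|^2_{B^{5/2}_{2,1}(A)} .
\]
The right-hand side is uniform in $\varepsilon$ by your bound. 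The proof of Lemma~\ref{lem:1222-4} gives $2^{2j}\|\phi_j(D)u\|_{L^2}\le C\|\psi_j(D)^{1/2}\Delta u\|_{L^2}$ pointwise in time, and this converts the estimate into (1).

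Your tail argument for (2) is fine. It is equivalent to the paper's route, which combines the bound above with Lemma~\ref{lem:1212-1}.
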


\noindent {\bf Proof. }	 
Step 1. 
	The existence of a unique solution $u_\varepsilon \in C([0,T], H^1(A)) \cap L^\infty (0,T ; B^{\frac{5}{2}}_{2,1}(A))$ is already established 
	in Proposition~\ref{prop:1225-1}, where $T$ depends on $\varepsilon$. 
	Starting from \eqref{0113-1}, we derive several inequalities in this step modify the argument used in the proof of Proposition~\ref{prop:1225-1}. 

The first inequality of \eqref{0113-1} combined with Lemma~\ref{lem:1222-5} yields: 
\[
		\partial _t \| \psi_j(D)^{\frac{1}{2}} \Delta u_{\varepsilon}  \|_{L^2} ^2 
\leq 
C  \| u_\varepsilon \| _{B^{\frac{5}{2}}_{2,1}(A)} 
    \| \psi_j(D)^{\frac{1}{2}} \Delta u_{\varepsilon}  \|_{L^2} ^2 
+ \Big|  \int _{\Omega}  \Big( \psi_j(D) N(u_\varepsilon , u_\varepsilon) \Big)  \cdot \Delta u_\varepsilon \Big| .
\]
We estimate the integral involving the nonlinear remainder term $N(u_{\varepsilon}, u_{\varepsilon})$. 
For $N_1$, we modify \eqref{1090-2} by using the inequality $\| \Delta u_\varepsilon \|_{L^3} \leq  C \| u_\varepsilon \|_{B^{\frac{5}{2}}_{2,1}(A)}$, ensured by Lemma~\ref{lem:1222-5}. 
We then have 
\[
\begin{split}
	\Big| \int _\Omega \Big( \psi_j (D) N_1\Big) \cdot \Delta u_{\varepsilon} \,dx \Big|
	&\leq \| u_{\varepsilon} \|_{B^{\frac{5}{2}}_{2,1}} \|  (1-S_j) u_{\varepsilon} \|_{H^2(A)} \| \psi_j(D)^{\frac{1}{2}}\Delta u_{\varepsilon} \|_{L^2}
	\\
	&\leq C \| u_{\varepsilon} \|_{B^{\frac{5}{2}}_{2,1}}
	\sum _{ k > j} 2^{2k }\|\phi_k(D)u_{\varepsilon} \|_{L^2}
	\| \psi_j (D)^{\frac{1}{2}} \Delta u_{\varepsilon} \|_{L^2}.
\end{split}
\]
Similarly, for \eqref{1090-3} and \eqref{1090-4} 
\[
\begin{split}
	\Big| \int _\Omega \Big( \psi_j (D) N_2\Big) \cdot \Delta u_{\varepsilon} \,dx \Big|
	\leq 
	& C \left( \sum _{k > j} 2^{2k}\| \phi_k(D) u_{\varepsilon} \|_{L^2} \right)
	\| u_\varepsilon \|_{B^{\frac{5}{2}}_{2,1}(A)}
	\| \psi_j(D)^{\frac{1}{2}}\Delta u_{\varepsilon} \|_{L^2} 
\end{split}
\]
and 
		\[
\begin{split}
	\Big|\int _\Omega \Big( \psi_j (D) N_3\Big) \cdot \Delta u_{\varepsilon} \,dx\Big|
	&\leq C 2^{-j}
	\left( \sum _{k \leq j} 2^{3k} \| \phi_k(D)u_{\varepsilon} \|_{L^2} \right)
	\| u_{\varepsilon} \|_{B^{\frac{5}{2}}_{2,1}(A)}
	\| \psi_j(D)^{\frac{1}{2}} \Delta u_{\varepsilon} \|_{L^2} .
\end{split}
\]
		The analogous term $2 (\nabla u_{\varepsilon} \cdot \nabla) \nabla u_{\varepsilon}$ is treated similarly and yields a comparable estimate.

\vskip2mm 

\noindent 
Step 2. 
		By combining the estimates obtained in the previous step, the energy inequality for the localized equation is given by:
\[
\begin{split}
	\dfrac{1}{2} \partial _t
	\| \psi_j(D)^{\frac{1}{2}} \Delta u_{\varepsilon}
	\|_{L^2}^2
	\leq
	&
	C \| u_\varepsilon \|_{B^{\frac{5}{2}}_{2,1}(A)}
	\| \psi_j (D)^{\frac{1}{2}} \Delta u_\varepsilon \|_{L^2}^2
	\\
	&   +
	C \| u_\varepsilon \|_{B^{\frac{5}{2}}_{2,1}(A)}
	\sum _{ k > j} 2^{2k }\|\phi_k(D)u_\varepsilon \|_{L^2}
	\| \psi_j (D)^{\frac{1}{2} }\Delta u_\varepsilon  \|_{L^2}
	\\
	&
	+C 2^{-j}
	\Big( \sum _{k \leq j} 2^{3k} \| \phi_k(D)u_\varepsilon \|_{L^2} \Big)
	\| u_\varepsilon \|_{B^{\frac{5}{2}}_{2,1}(A)}
	\| \psi_j(D)^{\frac{1}{2}} \Delta u_\varepsilon \|_{L^2}.
\end{split}
\]
Dividing the inequality by $\| \psi_j(D)^{\frac{1}{2}} \Delta u_{\varepsilon} \|_{L^2}$, we obtain the differential inequality for the $L^2$-norm:
\begin{equation}\label{1211-1}
	\begin{split}
		\partial _t
		\| \psi_j(D)^{\frac{1}{2}} \Delta u_{\varepsilon}
		\|_{L^2}
		\leq
		&
		C \| u_\varepsilon \|_{B^{\frac{5}{2}}_{2,1}(A)}
		\| \psi_j (D)^{\frac{1}{2}} \Delta u_\varepsilon \|_{L^2}
		+
		C \| u_\varepsilon \|_{B^{\frac{5}{2}}_{2,1}(A)}
		\sum _{ k > j} 2^{2k }\|\phi_k(D)u_\varepsilon \|_{L^2}
		\\
		&
		+C 2^{-j}
		\Big( \sum _{k \leq j} 2^{3k} \| \phi_k(D)u_\varepsilon \|_{L^2} \Big)
		\| u_\varepsilon \|_{B^{\frac{5}{2}}_{2,1}(A)} .
	\end{split}
\end{equation}
By integrating this inequality in time from $0$ to $t$, multiplying by $2^{\frac{1}{2}j}$, and then summing over all $j \in \mathbb Z$, we arrive at the following estimate:
\[
\begin{split}
	\sum _{j \in \mathbb Z} 2^{\frac{1}{2}j} \| \psi _j (D) ^{\frac{1}{2} } \Delta u_\varepsilon (t)\|_{L^2}
	\leq 
	&\sum _{j \in \mathbb Z} 2^{\frac{1}{2}j} \| \psi _j (D) ^{\frac{1}{2} } \Delta u_\varepsilon (0)\|_{L^2}
	\\
	& + C \int_0^t \| u_\varepsilon \|_{B^{\frac{5}{2}}_{2,1}(A)}
	\sum _{j \in \mathbb Z} 2^{\frac{1}{2}j}\| \psi _j(D)^{\frac{1}{2}} \Delta u_\varepsilon \|_{L^2} \,d\tau
	\\
	& + C \int_0^t 		\| u_\varepsilon \|_{B^{\frac{5}{2}}_{2,1}(A)}
	\sum _{ k \in \mathbb Z} 2^{\frac{5}{2}k }\|\phi_k(D)u_\varepsilon \|_{L^2}
	d\tau . 
\end{split}
\]
Recognizing that the summations above give the $B^{\frac{5}{2}}_{2,1}(A)$ norm (see  Lemma~\ref{lem:1222-4}),  
the estimate simplifies to:
\begin{equation}\label{1211-2}
	\| u_{\varepsilon} (t) \|_{B^{\frac{5}{2}} _{2,1}(A)}
	\leq C \| u_0 \|_{B^{\frac{5}{2}}_{2,1}(A)} + C T \| u_{\varepsilon} \|_{L^\infty (0,T ; B^{\frac{5}{2}}_{2,1}(A))} ^2 ,
\end{equation}
where the constant $C$ is independent of $\varepsilon$.

Moreover, we discuss the independency of the existence time $T$ with respect to $\varepsilon$. 
We know from the proof of Proposition~\ref{prop:1128-1} that we can construct solutions as long as the $H^1(A)$ norm is bounded. 
The estimate \eqref{1211-2}, coupled with 
$B^\frac{5}{2}_{2,1}(A) \hookrightarrow H^1(A)$ and a standard continuity argument implies that the existence time $T$ can be chosen independent of $\varepsilon$, only depending on the initial data: 
\[
T \geq \frac{C}{\| u_0 \|_{B^{\frac{5}{2}}_{2,1}(A)}}, 
\]
with $C$ independent of $\varepsilon$.

\vskip2mm 

\noindent 
Step 3. 
We prove $u \in C([0,T], B^\frac{5}{2}_{2,1} (A))$. 
Integrating \eqref{1211-1}, taking the supremum with respect to the time variable, and summing over $j \in \mathbb Z$: 
\[
\begin{split}
	\sum _{j \in \mathbb Z} 2^{\frac{1}{2}j} \| \psi _j (D) ^{\frac{1}{2} } \Delta u_\varepsilon (t)\|_{L^\infty (0,T;L^2)}
	\leq 
	&C \| u_0 \|_{B^{\frac{5}{2}}_{2,1}(A)}
	\\
	& + C \int_0^T \| u_\varepsilon \|_{B^{\frac{5}{2}}_{2,1}(A)}
	\sum _{j \in \mathbb Z} 2^{\frac{1}{2}j}\| \psi _j(D)^{\frac{1}{2}} \Delta u_\varepsilon \|_{L^2} \,d\tau
	\\
	& + C \int_0^T		\| u_\varepsilon \|_{B^{\frac{5}{2}}_{2,1}(A)}
	\sum _{ k \in \mathbb Z} 2^{\frac{5}{2}k }\|\phi_k(D)u_\varepsilon \|_{L^2}
	d\tau 
	\\
	\leq
	& C \| u_0 \|_{B^{\frac{5}{2}}_{2,1}(A)} + C T \| u_{\varepsilon} \|_{L^\infty (0,T ; B^{\frac{5}{2}}_{2,1}(A))} ^2 < \infty.
\end{split}
\]
We know  $u \in C([0,T], H^1 (A))$, and  Lemma~\ref{lem:1212-1} implies $u \in C([0,T], B^{\frac{5}{2}}_{2,1}(A))$. 
\hfill $\Box$

\section{Proof of Theorem~\ref{thm:1}}

Let $\varepsilon > 0$ and $u_0 \in B^{\frac{5}{2}}_{2,1}(A)$. The existence of a solution of the integral equation \eqref{eq:IE} is ensured by Proposition~\ref{prop:1128-1}, and its regularity follows from Propositions~\ref{prop:1203-1} and \ref{prop:1218-1}. Specifically, there exists a unique solution $u_\varepsilon$ satisfying 
\[
\begin{split}
	& u_\varepsilon \in C([0,T] , B^{\frac{5}{2}}_{2,1}(A)) \cap C^1 ([0,T], H^1(A)) \cap L^2 (0,T; H^3 (\Omega)), \\
	& \partial _t u_\varepsilon \in L^2_{\rm loc} ((0,T], H^2 (A)),
\end{split}
\]
and the equation
\[ 
\partial _t u _\varepsilon - \varepsilon \Delta u _\varepsilon + \mathbb P \Big( (u_\varepsilon \cdot \nabla) u_\varepsilon \Big) = 0 .
\]
Since the solution $u_\varepsilon$ satisfies a sufficient regularity, 
for any test function $\varphi \in C([0,T], H^2(A)) \cap C^1 ((0,T), L^2_\sigma)$ with ${\rm supp \, } \varphi \subset (0,T) \times \overline{\Omega}$, the following integral identity holds:
\begin{equation}\label{1225-2}
	\int _0^T \int _{\Omega} \Big\{ -u_\varepsilon \cdot \partial _t \varphi + \varepsilon u_\varepsilon \cdot \Delta \varphi - \Big( \big( u_{\varepsilon} \cdot \nabla \big) \varphi \Big) \cdot u_\varepsilon \Big\} \,dx dt = 0.
\end{equation}

Compactness property in Lemma~\ref{lem:0113-2} and the uniform boundedness established in Proposition~\ref{prop:1218-1} imply  that, as $\varepsilon \to 0$, a subsequence converges to a limit $u$ in the weak-$*$ topology of $\widetilde L^\infty (0,T ; B^{\frac{5}{2}}_{2,1}(A))$, 
where $\widetilde L^\infty (0,T ; B^{\frac{5}{2}}_{2,1}(A))$ is a Chemin-Lernar space (see \cite{BaChDa_2011}) equipped with the norm defined as follows. 
\[ \| u_\varepsilon \|_{\widetilde L^\infty (0,T B^{\frac{5}{2}}_{2,1}(A))} 
:= \sum _{ j \in \mathbb Z} 2^{\frac{5}{2}j} \| \phi_j(D) u_\varepsilon \|_{L^\infty (0,T; L^2)}.
\]
Noting the equivalence of the norm with replacing $\phi_j$ by $\psi _j$ analogously to Lemma~\ref{lem:1222-4}, 
we get the uniform bounds of the spacial derivative. 
\[ \sup_{0 < \varepsilon < 1} \| \nabla u_\varepsilon \|_{L^\infty (0,T ; L^2)} < \infty . 
\]
We also observe the uniform boundedness of the time derivative with respect to $0 < \varepsilon < 1$. Indeed, 
the uniformity follows from 
\[
\begin{split}
	\| \partial _t u_\varepsilon \|_{L^\infty (0,T; L^2)} 
	&\leq \varepsilon \| \Delta u_\varepsilon \|_{L^\infty (0,T; L^2)} + \big\| \mathbb P \big((u_\varepsilon \cdot \nabla) u_\varepsilon \big) \big\|_{L^\infty (0,T; L^2)} \\
	&\leq C \varepsilon \| u_\varepsilon \|_{L^\infty (0,T ; B^\frac{5}{2}_{2,1}(A))} + C \| u_\varepsilon \|_{L^\infty (0,T ; B^\frac{5}{2}_{2,1}(A))}^2.
\end{split}
\]
The uniform boundedness of both time and spatial derivatives  enables us to extract a subsequence that converges to $u$  in $L^2 ((0,T) \times \Omega)$.   Passing to the limit in \eqref{1225-2}, we obtain
\[
\int _0^T \int _{\Omega} \Big\{ -u \cdot \partial _t \varphi - \Big( \big( u \cdot \nabla \big) \varphi \Big) \cdot u \Big\} \,dx dt = 0,
\]
which implies
\[
\int _0^T \int _{\Omega} \Big\{ -u \cdot \partial _t \varphi + \Big( \big( u \cdot \nabla \big) u \Big) \cdot \varphi \Big\} \,dx dt = 0.
\]
The regularity $u \in \widetilde  L^\infty (0,T ; B^{\frac{5}{2}}_{2,1}(A)) \subset  L^\infty (0,T ; B^{\frac{5}{2}}_{2,1}(A))$ ensures that the nonlinear term $(u \cdot \nabla ) u$ belongs to $L^\infty (0,T ; W^{1,3}(\Omega))$. Consequently, the time derivative $\partial _t u$ exists as an element of $L^\infty (0,T ; W^{1,3} (\Omega))$, and we have
\[
\partial _t u + \mathbb P \big( (u \cdot \nabla) u \big) = 0 \quad \text{ in } W^{1,3}(\Omega) \text{ for almost every } t \in (0,T).
\]

Next, we prove the uniqueness of the solution in $L^\infty (0,T; B^\frac{5}{2}_{2,1}(A))$. Let $u$ and $v$ be two solutions with the same initial data. Following the same justification for the time derivatives as above, we consider the energy estimate for the difference $u-v$:
\[
\frac{1}{2} \partial _t \| u-v \|_{L^2}^2 
= \left| \int _{\Omega} \Big( \big((u-v) \cdot \nabla \big) u \Big) \cdot (u-v) \,dx \right|
\leq \| \nabla u \|_{L^\infty} \| u-v \|_{L^2}^2.
\]
Since $\| \nabla u \|_{L^\infty} \leq C \| u \|_{B^{\frac{5}{2}}_{2,1}(A)}$ and $u \in L^\infty (0,T ; B^{\frac{5}{2}}_{2,1}(A))$, the Gronwall inequality yields $u - v = 0$.

It remains to establish the time continuity of the solution. 
By Lemma~\ref{lem:0113-2}, we can show that $u$ satisfies
\[
\sum _{j \in \mathbb Z} 2^{\frac{1}{2}j} \| \psi _j(D) ^{\frac{1}{2}} \Delta u \|_{L^\infty (0,T; L^2)} < \infty .
\]
Combining this with $\partial _t u \in L^\infty (0,T ; W^{1,3}(\Omega))$ obtained before, we have $u \in C([0,T], L^2)$. Finally, Lemma~\ref{lem:1212-1} ensures the time continuity in $B^{\frac{5}{2}}_{2,1}(A)$, and thus $u \in C([0,T], B^{\frac{5}{2}}_{2,1}(A))$.

	\appendix

	\section{Compact Embedding and Time Continuity}

\begin{lem}\label{lem:0113-2}
	Let $s \in \mathbb R$. 
Suppose that  $ \{u_N \}_{N=1}^\infty $ is a bounded sequence in the Chemin-Lerner space $\widetilde L^\infty (0,T ; B^s_{2,1}(A))$, i.e., 
\[
\sup_{N\in \mathbb N} \sum _{j \in \mathbb Z} 2^{sj} \| \phi_j(D) u_N \|_{L^\infty (0,T ; L^2)}< \infty .
\]
Then a subsequence exists such that it converges in the weak-$*$ topology of  $\widetilde L^\infty (0,T ; B^s_{2,1}(A))$. 
	
\end{lem}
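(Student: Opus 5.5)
The plan is to identify $\widetilde L^\infty(0,T;B^s_{2,1}(A))$ isometrically with a closed subspace of an $\ell^1$-valued space that has a natural separable predual, and then apply Banach--Alaoglu. First, I will make precise what ``weak-$*$'' means. Set $X := \widetilde L^\infty(0,T;B^s_{2,1}(A))$ and consider the map
\[
\iota : u \mapsto \big( 2^{sj}\phi_j(D)u \big)_{j\in\mathbb Z},
\]
which sends $X$ into $\ell^1(\mathbb Z; L^\infty(0,T;L^2_\sigma))$. The latter space is the dual of $c_0(\mathbb Z; L^1(0,T;L^2_\sigma))$, and this predual is separable because $L^2_\sigma$ is separable and $T<\infty$. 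Thus the weak-$*$ topology on $X$ will be the one induced through $\iota$.

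Second, I apply the sequential Banach--Alaoglu theorem. Since the predual is separable, bounded sets in $\ell^1(\mathbb Z;L^\infty(0,T;L^2_\sigma))$ are weak-$*$ sequentially compact. Concretely, I will run a diagonal argument. For each fixed $j$, the sequence $\{\phi_j(D)u_N\}_N$ is bounded in $L^\infty(0,T;L^2_\sigma)=(L^1(0,T;L^2_\sigma))'$, so it has a weak-$*$ convergent subsequence with limit $v_j$. Diagonalizing over the countable set $j\in\mathbb Z$ gives a single subsequence along which this holds for every $j$. Weak-$*$ lower semicontinuity of the norm together with Fatou's lemma for sums then gives
\[
\sum_j 2^{sj}\|v_j\|_{L^\infty(0,T;L^2)} \le \sup_N \|u_N\|_X .
\]
To upgrade coordinatewise convergence to weak-$*$ convergence in the $\ell^1$-valued space, I test against sequences $(g_j)$ in $c_0(\mathbb Z;L^1(0,T;L^2_\sigma))$. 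Truncating to $|j|\le J$ leaves a tail bounded by $\sup_{|j|>J}\|g_j\|_{L^1(0,T;L^2)}\cdot \sup_N\|u_N\|_X$, which tends to $0$ as $J\to\infty$ uniformly in $N$.

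The main step is the third one: identifying the limit $(2^{sj}v_j)_j$ as $\iota u$ for some genuine element $u \in X$. This amounts to showing that the image of $\iota$ is weak-$*$ sequentially closed. I would define
\[
u := \sum_j v_j ,
\]
where the series converges absolutely in $L^\infty(0,T;B^{s}_{2,1})$-type norms, and then check that $\phi_k(D)u = v_k$. To do this, I use the almost-orthogonality $\phi_k(D)\phi_j(D)=0$ for $|j-k|\ge 2$. Since $\phi_k(D)\phi_j(D)$ is bounded on $L^2$, it preserves weak-$*$ limits in $L^\infty(0,T;L^2)$, so
\[
\phi_k(D)v_j = \lim_N \phi_k(D)\phi_j(D)u_N .
\]
Summing over $j$ and using the resolution of the identity $\sum_j\phi_j(D)=I$ on $L^2_\sigma$ should then give
\[
\phi_k(D)u = \lim_N \phi_k(D)u_N = v_k .
\]
Here, because $A$ is strictly positive and $\Omega$ is bounded, only finitely many negative $j$ contribute nontrivially. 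I expect the justification of interchanging the limit in $N$ with the sum over $j$ to be the delicate point. I would handle it with the same uniform tail bound as in the second step, where the tail is controlled by $\sup_N\|u_N\|_X$ times a vanishing factor, and note that the case $s<0$ needs no special treatment since only finitely many low frequencies are nonzero.

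Finally, I conclude that $u_N\to u$ weak-$*$ in $X$ along the subsequence, and that $\|u\|_X\le\liminf_N\|u_N\|_X$. This last bound is what is used in Section 5 to pass the uniform bound of Proposition~\ref{prop:1218-1} to the limit.
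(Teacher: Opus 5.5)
Your proposal is correct and follows the paper's route: the isometry $u\mapsto\{2^{sj}\phi_j(D)u\}_{j}$ into $\ell^1(\mathbb Z;L^\infty(0,T;L^2))$, whose separable predual is $c_0(\mathbb Z;L^1(0,T;L^2))$, followed by sequential Banach--Alaoglu, which you carry out concretely by a diagonal argument. Your third step is more careful than the paper, which only asserts that the range of the isometry is norm-closed, whereas you verify its weak-$*$ sequential closedness via $\phi_k(D)\phi_j(D)=0$ for $|j-k|\ge 2$. Two small remarks: this identity leaves only the three terms $|j-k|\le 1$, so no limit--sum interchange is actually needed; and for $s<0$ the series $\sum_j v_j$ should be summed in the Besov-type norm or in $\mathcal X_\sigma'$ rather than in $L^2$, since the difficulty there lies at high, not low, frequencies.
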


\begin{pf}
	We denote by $c_0(\mathbb{Z}; L^1(0, T; L^2))$ the space of sequences $\{ f_j \}_{j \in \mathbb{Z}}$ of measurable functions on $(0, T) \times \Omega$ such that 
	\[
	\sup_{j \in \mathbb{Z}} \| f_j \|_{L^1(0, T; L^2)} < \infty \quad \text{and} \quad \lim_{|j| \to \infty} \| f_j \|_{L^1(0, T; L^2)} = 0.
	\]
	It is well known that $c_0(\mathbb{Z}; L^1(0, T; L^2))$ is separable. 
	
	We now consider the mapping
	\[
	\widetilde{L}^\infty(0, T; B^s_{2,1}(A)) \ni u \quad \mapsto \quad \{ 2^{sj} \phi_j(D) u \}_{j \in \mathbb{Z}} \in \ell^1(\mathbb{Z}; L^\infty(0, T; L^2)).
	\]
	This map is an isometry, and its range is a closed subspace of $\ell^1(\mathbb{Z}; L^\infty(0, T; L^2))$. Note that the space $\widetilde{L}^\infty(0, T; B^s_{2,1}(A))$ can be identified with a closed subspace of the dual of $\ell^1(\mathbb{Z}; L^1(0, T; L^2))$. More precisely, its predual is a quotient space (or closed subspace) related to $c_0(\mathbb{Z}; L^1(0, T; L^2))$. 
	For any bounded sequence in $\widetilde{L}^\infty(0, T; B^s_{2,1}(A))$, the Banach-Alaoglu theorem, combined with the separability of the predual, ensures the existence of a subsequence that converges in the weak-$*$ topology of $\widetilde{L}^\infty(0, T; B^s_{2,1}(A))$.
\end{pf}

\begin{lem}\label{lem:1212-1}
Let  $s \geq 0$. 
Suppose that $u \in C([0,T], L^2) \cap \widetilde L^\infty (0,T ; B^s_{2,1}(A))$. 
Then $u \in C([0,T], B^s_{2,1}(A))$. 
\end{lem}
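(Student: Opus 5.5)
The argument is the standard "uniform tail plus continuity of each dyadic block" one; the structure of the Chemin--Lerner norm is what gives the uniformity. Fix $t_0\in[0,T]$ and $\eta>0$. We must show that $\|u(t)-u(t_0)\|_{B^s_{2,1}(A)}<\eta$ for $t$ close to $t_0$. Write
\[
\|u(t)-u(t_0)\|_{B^s_{2,1}(A)}\le \sum_{|j|\le J}2^{sj}\|\phi_j(D)(u(t)-u(t_0))\|_{L^2}+2\sum_{|j|>J}2^{sj}\|\phi_j(D)u\|_{L^\infty(0,T;L^2)} .
\]
The point of using $\widetilde L^\infty$ rather than $L^\infty(0,T;B^s_{2,1}(A))$ is that the series $\sum_j 2^{sj}\|\phi_j(D)u\|_{L^\infty(0,T;L^2)}$ converges. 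Its tail is therefore small uniformly in $t$, and we choose $J$ so that the second term is below $\eta/2$.

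Here $\|\phi_j(D)u(t)\|_{L^2}\le\|\phi_j(D)u\|_{L^\infty(0,T;L^2)}$ for every $t$, not just almost every $t$. This holds because $u\in C([0,T],L^2)$ and $\phi_j(D)$ is bounded on $L^2$, so $t\mapsto\phi_j(D)u(t)$ is continuous in $L^2$. Its essential supremum therefore dominates its value at every point.

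For the finitely many blocks with $|j|\le J$, Proposition~\ref{prop:0114-1} (or simply the spectral theorem with $p=2$) gives $\|\phi_j(D)\|_{L^2\to L^2}\le C$ uniformly in $j$. Hence
\[
\sum_{|j|\le J}2^{sj}\|\phi_j(D)(u(t)-u(t_0))\|_{L^2}\le C\Big(\sum_{|j|\le J}2^{sj}\Big)\|u(t)-u(t_0)\|_{L^2},
\]
which is below $\eta/2$ for $|t-t_0|$ small, by $u\in C([0,T],L^2)$. This proves continuity at $t_0$.

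One point needs care. With the homogeneous norm, $j$ ranges over all of $\mathbb Z$, including $j\to-\infty$. The low-frequency tail is handled exactly like the high-frequency tail, by summability of the Chemin--Lerner norm, so no use of $s\ge0$ is actually needed. Alternatively, since $A$ is strictly positive, $\phi_j(D)=0$ for all $j$ below some $j_0$, and this part is finite anyway. I expect the main (minor) obstacle to be the pointwise-versus-essential-supremum issue above, which continuity in $L^2$ resolves.
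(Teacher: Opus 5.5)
Your proposal is correct and follows essentially the same argument as the paper. Both bound the tail over $|j|>J$ uniformly in $t$ using summability of the Chemin--Lerner norm, and both handle the finitely many remaining blocks by continuity of $u$ in $L^2$. Your remark that continuity of $t\mapsto\phi_j(D)u(t)$ in $L^2$ lets the essential supremum control every pointwise value also fills a small point the paper passes over silently.
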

\begin{pf} 
	Let $u$ satisfy the assumptions in Lemma~\ref{lem:1212-1}. Suppose $t_0 \in [0, T]$; we consider the continuity of $u(t)$ at $t = t_0$. 
	Let $\varepsilon > 0$ be an arbitrary constant. We first choose $J_0 \in \mathbb{N}$ large enough so that 
	\[
	\sum_{|j| > J_0} 2^{sj} \sup_{t \in [0,T]}    \| \phi_j(D) u(t) \|_{L^2} < \varepsilon.
	\]
	This choice implies that
	\[
	\sum_{|j| > J_0} 2^{sj} \| \phi_j(D) (u(t) - u(t_0)) \|_{L^2} \leq 2 \sup_{t \in [0,T]} \sum_{|j| > J_0} 2^{sj} \| \phi_j(D) u(t) \|_{L^2} < 2\varepsilon
	\]
	for all $t \in [0, T]$. On the other hand, the continuity of $u$ in $C([0, T]; L^2)$ and the fact that the sum over $|j| \leq J_0$ is a finite sum ensure that there exists $\delta > 0$ such that 
	\[
	\sum_{|j| \leq J_0} 2^{sj} \| \phi_j(D) (u(t) - u(t_0)) \|_{L^2} < \varepsilon
	\]
	provided that $|t - t_0| < \delta$. Combining these estimates, we conclude that $u(t) \to u(t_0)$ in $B^s_{2,1}(A)$ as $t \to t_0$.
\end{pf}
	
\section*{Acknowledgement}
This work was supported by JSPS 	
Grant-in-Aid for Scientific Research (A) JP21H04433. 
\par
\bigskip
{\bf Declaration of interests} The authors declare no conflicts of interest.
\par
{\bf Data availability statement} 
Data sharing is not applicable to this article as no data were created or analyzed in this study.

\end{document}